\documentclass[12pt,reqno]{amsart}
\usepackage{amsmath,amsfonts,amsthm,amssymb,color,mathrsfs}
\usepackage [latin1]{inputenc}

\usepackage{hyperref}
\hypersetup{
     colorlinks   = true,
     citecolor    = blue,
     linkcolor    = blue
}

\usepackage[T1]{fontenc}
\usepackage{pdfsync}

\usepackage[left=1.0in, right=1.0in, top=1.1in,bottom=1.1in]{geometry}
\setlength{\parskip}{3.5pt}

%
%

\definecolor{dg}{rgb}{0, 0.5, 0}

\newcommand{\ch}{{{\mathcal{H}}}}
\newcommand{\tch}{{\tilde{\mathcal H}}}

\newcommand{\bn}{\mathbf{n}}

\newcommand{\bd}{\mathbf{D}}
\newcommand{\bj}{\mathbf{J}}

\newcommand{\bX}{\mathbf{X}}

\newcommand{\der}{\delta}

\newcommand{\id}{\mbox{Id}}

\newcommand{\ot}{[0,t]}

\newcommand{\1}{{\bf 1}}

\newcommand{\lp}{\left(}
\newcommand{\rp}{\right)}
\newcommand{\lc}{\left[}
\newcommand{\rc}{\right]}
\newcommand{\lcl}{\left\{}
\newcommand{\rcl}{\right\}}
\newcommand{\lln}{\left|}
\newcommand{\rrn}{\right|}
\newcommand{\lla}{\left\langle}
\newcommand{\rra}{\right\rangle}


\newcommand{\al}{\alpha}
\newcommand{\ep}{\varepsilon}

\newcommand{\ga}{\gamma}
\newcommand{\ka}{\kappa}
\newcommand{\la}{\lambda}

\newcommand{\om}{\omega}
\newcommand{\oom}{\Omega}

\newcommand{\si}{\sigma}

\newcommand{\eps}{\varepsilon}


\newcommand{\cac}{{\mathcal C}}

\newcommand{\ce}{{\mathcal E}}
\newcommand{\cf}{{\mathcal F}}

\newcommand{\crr}{{\mathcal R}}

%
%
\newcommand{\beq}{\begin{equation}}
\newcommand{\eeq}{\end{equation}}
\newcommand{\bea}{\begin{eqnarray}}
\newcommand{\eea}{\end{eqnarray}}
\newcommand{\beas}{\begin{eqnarray*}}
\newcommand{\eeas}{\end{eqnarray*}}


\def\me{{\mathbb  E}}

\def\md{{\mathbb D}}
\def\mr{{\mathbb  R}}

\def\mp{{\mathbb  P}}

\newcommand{\EE}{{\mathbb E}}

\newcommand{\PP}{{\mathbb P}}

\newcommand{\R}{{\mathbb R}}
\newcommand{\Z}{{\mathbb Z}}
%
%
\newtheorem{theorem}{Theorem}[section]

\newtheorem{corollary}[theorem]{Corollary}

\newtheorem{definition}[theorem]{Definition}

\newtheorem{hypothesis}[theorem]{Hypothesis}
\newtheorem{lemma}[theorem]{Lemma}

\newtheorem{proposition}[theorem]{Proposition}
\theoremstyle{remark}
\newtheorem{remark}[theorem]{Remark}
\theoremstyle{remark}
\newtheorem{example}[theorem]{Example}
\theoremstyle{remark}

\newtheorem{foo}[theorem]{Remarks}

%
%




\newcommand{\var}[1]{{\rm Var}\left(#1\right)}



\title[Density bounds for Gaussian RDEs]{Density bounds for solutions  to differential equations driven by Gaussian rough paths}

\author[B. Gess \and C. Ouyang \and S. Tindel]
{Benjamin Gess \and Cheng Ouyang \and Samy Tindel}

\thanks{ 
C. Ouyang' research is supported in part by Simons grant \#355480.   S. Tindel is supported in part by
NSF Grant DMS 0907326. }

\subjclass[2010]{60G15; 60H07; 60H10; 65C30}

\keywords{fractional Brownian motion, Gaussian processes , rough paths, Malliavin calculus}

\date{\today}

\address{Benjamin Gess, Max Planck Institute for Mathematics in the Sciences, Leipzig \& University of Bielefeld, Germany}
\email{benjamin.gess@gmail.com}

\address{Cheng Ouyang, Dept. Mathematics, Statistics and Computer Science, University of Illinois at Chicago, 851 S. Morgan St., Chicago, IL 60607, USA.}
\email{couyang@math.uic.edu}

\address{Samy Tindel, Dept. Mathematics, Purdue University, 150 N. University St., West La\-fa\-ye\-tte, IN 47907-2067, USA.}
\email{stindel@math.purdue.edu}

\begin{document}

\begin{abstract}
We consider finite dimensional rough differential equations driven by centered Gaussian processes. Combining Malliavin calculus, rough paths techniques and interpolation inequalities, we establish upper bounds on the density of the corresponding solution for any fixed time $t>0$. In addition, we provide Varadhan estimates for the asymptotic behavior of the density for small noise. The emphasis is on working with general Gaussian processes with covariance function satisfying suitable abstract, checkable conditions.
\end{abstract}

\maketitle

{
\hypersetup{linkcolor=black}
 \tableofcontents 
}

\section{Introduction}

Let $p_{t}$ be the density of the solution $Y_{t}^{z}$ to a  stochastic differential equation 
\begin{equation}\label{eq:diffusion-process} 
Y^{z}_t =z +\int_0^t V_0 (Y^z_s)ds+
\sum_{i=1}^d \int_0^t V_i (Y^{z}_s) dB^i_s,
\end{equation}
driven by a $d$-dimensional Brownian motion $B$, where $z\in\R^{n}$ is a given initial condition and $V_0,\ldots,V_d$ are smooth vector fields on $\R^n$. In this classical setting and under non-degeneracy conditions on the vector fields $V_0,\ldots,V_d$, it is a well-know fact that $p_{t}$ behaves like a Gaussian density. Such results can be obtained by considering the PDE governing $p_{t}$, which relies on the Markovian nature of \eqref{eq:diffusion-process}. Alternatively, due to the celebrated proof of H\"ormander's theorem by Malliavin \cite{Ma}, more probabilistic tools have been used in order to analyze laws of solutions to stochastic differential equations. This kind of technology has paved the way to the extension of such results to a much broader class of differential equations, such as delayed equations \cite{BM,FRS} and stochastic PDE (see e.g \cite{BP,NQ,RS} among many others).

While the above equation \eqref{eq:diffusion-process} is restricted to Brownian noise, Terry Lyons' theory of rough paths allows to study more general stochastic differential equations of the type
\begin{equation}\label{eq:rde-intro} 
Z^{z}_t =z +\int_0^t V_0 (Z^z_s)ds+
\sum_{i=1}^d \int_0^t V_i (Z^{z}_s) dX^i_s,
\end{equation}
driven by general $p$-rough paths $X$. Among the processes $X$ to which the abstract theory of rough paths can be applied, fractional Brownian motion has attracted a lot of attention in recent years. Indeed, based on several recent works in this direction, the law of the solution to \eqref{eq:rde-intro} driven by fractional Brownian motion is now fairly well understood. Important results in this direction include the existence of a density, smoothness results, Gaussian bounds, short time asymptotics, invariant measures, hitting probabilities and the existence of local times {(see \cite{BH, CF, CHLT, BOT, BO, Inahama2, H, BNOT, LO2} and the references therein).}

Much less is known for differential equations \eqref{eq:rde-intro} driven by general Gaussian processes. This is in contrast to the theory of rough paths, which covers a lot more than fractional Brownian motion. In fact, the existence of a rough path lift for Gaussian processes is naturally related to the existence of 2-d Young type integrals for the covariance function $R$, as highlighted in \cite{FV-bk} and improved in \cite{FGGR13} based on mixed variations of $R$. In addition, in \cite{FGGR13} the applicability to a wide variety of Gaussian processes, such as Gaussian random Fourier series and bifractional Brownian motions is shown, hence allowing to give a meaning and solve equations of the form \eqref{eq:rde-intro} in this general framework. Further studies of differential equations driven by general Gaussian processes include H\"ormander type theorems under general local non-determinism type conditions on the covariance $R$ (see \cite{CHLT}).

The current article is a further development towards a more complete description of differential equations \eqref{eq:rde-intro} driven by general Gaussian processes. More precisely, we consider \eqref{eq:rde-intro} driven by a Gaussian process $X$ satisfying appropriate general, checkable conditions. Assuming ellipticity conditions on the vector fields $V_0,\ldots,V_d$ and natural conditions on the covariance $R$, we prove that the density of $Z_{t}$ admits a sub Gaussian upper bound (Theorem \ref{thm:upper-bnd-density} below). Moreover, we show in Theorem \ref{th: main result} below that the density satisfies Varadhan type estimates for small noise.  The proof of the above results is based on stochastic analysis tools and, more specifically, on an integration by parts formula which gives an exact expression for the density function in terms of the Malliavin derivatives and the Malliavin matrix of $Z$. Thus, a large part of the paper is devoted to obtaining precise estimates for the Malliavin derivative and Malliavin matrix.

The assumptions on the driving Gaussian process are quite standard in the rough paths literature and can be divided into the following two groups:

\noindent
\emph{(i)}
Similarly to \cite{FGGR13}, we assume that the covariance function $R$ has finite mixed $(1,\rho)$-variation for some $\rho\in[1,2)$ in order to ensure that the driving process $X$ admits a rough path lift and complementary Young regularity is satisfied.

\noindent
\emph{(ii)}
In order to analyze the inverse of the Malliavin matrix of the solution $Z$, we rely on interpolation inequalities for the Cameron-Martin space related to $X$ (see Proposition \ref{th: interpolation} below), which in turn rely on monotonicity conditions on the increments of the covariance $R$ (see Hypotheses \ref{hyp:correlation-increments-X} below) and so-called non-determinism conditions (Hypothesis \ref{assumption1} below), which have already been used in \cite{CHLT}.

The rest of the paper is organized as follows. In Section 2, we provide some basic tools from Malliavin calculus and rough path theory that will be needed later. We also set up corresponding notations in this section. Section 3 is devoted to obtaining the upper bound of the density, while Section 4 focuses on Varadhan estimates. Finally, in Section 5, we provide several examples of Gaussian rough paths that satisfy the general assumptions supposed in the main body of this work.

\textbf{Notations:} Throughout this paper, unless specified otherwise, we denote Euclidean norms by $|\cdot|$. The space of $\mr^n$-valued $\gamma$-H\"older continuous functions defined on $[0,T]$ will be denoted by $\cac^\ga([0,T], \mr^n)$ and $\cac^\ga$ for short. For a function $g\in\cac^\ga([0,T],\mr^n)$ and $0\le s<t\le T$, we shall consider the semi-norms
\begin{equation}\label{eq:def-holder-norms}
\|g\|_{\ga; [s,t]}:=\sup_{s\le u<v\le t}\frac{|g_v-g_u|}{|v-u|^{\ga}}.
\end{equation}
Generic universal constants will be denoted by $c,C$ independently of their exact values.

\section{Preliminary material}\label{sec:preliminary-material}

{This section contains some basic tools from Malliavin calculus and rough paths theory, as well as some analytical results, which are crucial for the definition and analysis of equation \eqref{eq:rde-intro}.}

\subsection{Preliminaries on rough paths}\label{sec:rough-path-above-X}

In this section we shall recall the notion of a rough path and how this applies to Gaussian signals. The interested reader is referred to \cite{FH,FV-bk,Gu} for further details.

For $s<t$ and $m\geq 1$, consider the simplex $\Delta_{st}^{m}=\{(u_{1},\ldots,u_{m})\in\lbrack s,t]^{m};\,u_{1}<\cdots<u_{m}\} $, while a simplex over $[0,T]$ will be denoted by $\Delta^{m}$. For a generic finite dimensional vector space  $E$, for an $E$-valued function $f$ defined on $[0,T]$ and for all $(s,t)\in\Delta^{2}$ we set
\begin{equation*}
\delta f_{st} = f_{t} -f_{s} .
\end{equation*}
The notion of a rough path relies on the notion of the signature of a path, that we now proceed to recall.


We start by defining an algebra in which the signature of a rough path will live.

\begin{definition}\label{def:truncated-algebra}
For $N\in\mathbb{N}$, the truncated algebra $T^{N}(\mathbb{R}%
^{d})$ is defined by 
\begin{equation*}
T^{N}(\mathbb{R}^{d})=\bigoplus_{n=0}^{N}(\mathbb{R}^{d})^{\otimes n},
\end{equation*}
with the convention $(\mathbb{R}^{d})^{\otimes
0}=\mathbb{R}$. The set $T^{N}(\mathbb{R}^{d})$ is equipped with a straightforward
vector space structure, plus an operation $\otimes$ defined by
\[
\lc g\otimes h\rc^{n}=\sum_{k=0}^{N}g^{n-k}\otimes h^{k},\qquad g,h\in
T^{N}(\mathbb{R}^{d}),
\]
where $g^{n}$ designates the projection on the $n$-th tensor level. 
\end{definition}

\noindent
Notice that with Definition \ref{def:truncated-algebra} in hand,
$(T^{N}(\mathbb{R}^{d}),+,\otimes)$ is an associative algebra with unit
element $\mathbf{1} \in (\mathbb{R}^{d})^{\otimes 0}$.

In the sequel we consider the process $X$ driving equation \eqref{eq:rde-intro} as a special case of continuous $\mathbb{R}^{d}$-valued paths defined on $[0,T]$. The regularity of $X$ will often be characterized by its $p$-variation.

\begin{definition}\label{def:var-norms-on-C2}
Let $f$ be a continuous function from $[0,T]$ to a finite dimensional vector space $E$. For all $p>0$ we set
$$
\|f\|_{p-{\rm var}}
=
\|f\|_{p-{\rm var}; [0,T]}
=
\sup_{\Pi \subset [0,T]}\left(\sum_{i} |f_{t_{i}t_{i+1}}|^p\right)^{1/p},
$$
where the supremum is taken over all subdivisions $\Pi$ of $[0,T]$. The set of continuous functions with finite $p$-variation is denoted by $\cac^{p{\rm -var}}(E)$.
\end{definition}

\noindent
Related to finite $p$-variation functions we will also consider the set of $\gamma$-H\"{o}lder
continuous functions,  and we recall that this functional space is defined by \eqref{eq:def-holder-norms}.

With our simplex notation $\Delta$ and Definition \ref{def:truncated-algebra} in mind, a continuous map
$\mathbf{x}:\Delta^{2}\rightarrow T^{N}(\mathbb{R}^{d})$ is called a
multiplicative functional if for $s<u<t$ one has $\mathbf{x}_{s,t}%
=\mathbf{x}_{s,u}\otimes\mathbf{x}_{u,t}$. A particular occurrence of this kind of map is given when one considers a path $x$ with finite
variation and sets for $(s,t)\in\Delta^{2}$,
\begin{equation}
\mathbf{x}_{st}^{n}=\sum_{1\leq i_{1},\ldots,i_{n}\leq d}\biggl( \int%
_{\Delta_{st}^{n}}dx^{i_{1}}\cdots dx^{i_{n}}\biggr) \,e_{i_{1}}\otimes
\cdots\otimes e_{i_{n}},\label{eq:def-iterated-intg}%
\end{equation}
where $\{e_{1},\ldots,e_{d}\}$ denotes the canonical basis of $\mathbb{R}^{d}
$. Then the so-called \textit{signature} of $x$ is the following object:
\begin{equation}\label{eq:signature-smooth-x}
S_{N}(x):\Delta^{2}\rightarrow T^{N}(\mathbb{R}^{d}),\qquad(s,t)\mapsto
S_{N}(x)_{s,t}:=1+\sum_{n=1}^{N}\mathbf{x}_{st}^{n}.
\end{equation}
It is worth mentioning that $S_{N}(x)$ will be our typical example of multiplicative functional. In addition, it can be shown that $S_{N}(x)$ lives in a subset $G^{N}(\mathbb{R}^{d})$ of $T^{N}(\mathbb{R}^{d})$ consisting of \emph{group-like elements}. This subset is defined by
\begin{equation*}
G^{N}(\mathbb{R}^{d}) = \exp^\oplus\bigl(L^N(\mathbb{R}^d)\bigr),
\end{equation*}
where $L^N(\mathbb{R}^d)$ is the linear span of all elements
that can be written as a commutator of the type $a \otimes b - b\otimes a$ for two elements in $T^{N}(\mathbb{R}^{d})$. It is known that there is a Carnot-Caratheodory norm  on $G^N(\mr^d)$ (see \cite{FV-bk}), which  we denote by $\|\cdot\|_{CC}$. It is a homogeneous norm with respect to the natural scaling operation on $T^N(\mr^d)$. 

A rough path can be seen as a generalization of the signature \eqref{eq:signature-smooth-x} to non smooth situations. Specifically, the definition of rough path can be summarized as follows.
 \begin{definition}\label{def:RP}
The space of weakly geometric $p$-rough paths 
is the set of multiplicative paths $\mathbf{x}:\Delta^{2}\rightarrow G^{\lfloor
p\rfloor}( \mathbb{R} ^{d}) $ such that the following norm is finite:
\begin{equation}\label{inhomogeneous}
\|\mathbf{x}\|_{p-\mathrm{var};[0,T]}=\sup_{\Pi \subset [0,T]}\left(\sum_{i} \|\mathbf{x}_{t_{i}t_{i+1}}\|_{CC}^p\right)^{1/p},
\end{equation}
where the supremum is taken over all subdivisions $\Pi$ of $[0,T]$.
An important subclass of weakly geometric $p$-rough paths is the set of geometric $p$-rough paths. These are multiplicative paths $\mathbf{x}:\Delta^{2}\rightarrow G^{\lfloor
p\rfloor}( \mathbb{R} ^{d}) $ with $\|\mathbf{x}\|_{p-\mathrm{var};[0,T]}<\infty$ such that there exists a sequence 
$\{  x^{\ep}; \, \ep>0 \} $ with $x^{\ep}\in C^{\infty}([0,T];\mathbb{R}^{d})$ and $S_{\lfloor p\rfloor}(x^\epsilon)\to\mathbf{x}$ under $\|\cdot\|_{p-\mathrm{var;[0,T]}}$. 
In other words, the set of geometric $p$-rough paths is the closure of smooth paths in the space of weakly geometric $p$-rough paths equipped with a $p$-var norm.


\end{definition}




With the above preliminary notions in hand, we now give the main theorem concerning existence and uniqueness of the solution to a rough differential equation. We refer the reader to \cite{FH,Gu} for its proof.

\begin{theorem}\label{thm:exist-uniq-rde-rough}
Suppose $x$ can be lifted as a geometric $p$-rough path 
and $V_0,\ldots,V_d$ be $\cac^\gamma$-Lipschitz continuous vector fields in $\R^{n}$ for some $\gamma> p \geq1$. For $\ep>0$, let $z^{\ep}$ be the unique solution of the following ordinary differential equation on $[0,T]$
\begin{equation}\label{eq:rde-approx} 
z^{\ep}_t =z +\int_0^t V_0 (z^{\ep}_s)ds+
\sum_{i=1}^d \int_0^t V_i (z^{\ep}_s) dx^{\ep,i}_s,
\end{equation}
where $x^{\ep}$ is a sequence of smooth functions approximating $x$ in the sense of Definition \ref{def:RP}. Then $z^{\ep}$ converges in $p$-variation to a path $z$, which can be seen as the unique solution of equation~\eqref{eq:rde-intro} understood in rough path sense.
 \end{theorem}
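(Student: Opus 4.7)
The plan is to follow the standard rough path strategy (as developed by Lyons, Davie, Friz--Victoir and Gubinelli) in three stages: obtain uniform-in-$\ep$ a priori estimates on $Z^{\ep}$ in $p$-variation depending only on the geometric rough path norm of $\mathbf{X}$, upgrade these estimates to a Cauchy property, and finally identify the limit as a solution of the RDE \eqref{eq:rde-intro}. The crucial point is that although $\|X^{\ep}\|_{1\text{-var}} \to \infty$, by Definition~\ref{def:RP} the lifts $\mathbf{X}^{\ep,\mathbf{n}}$ converge to $\mathbf{X}^{\mathbf{n}}$ in $\frac{p}{n}$-variation, so all estimates must be expressed through these higher-level objects rather than through the first-level path.

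First I would derive a \emph{local Taylor expansion} for the classical ODE solution $Z^{\ep}$: for $(s,t)\in\Delta^{2}$ with $|t-s|$ small,
\begin{equation*}
\delta Z^{\ep}_{st} = \sum_{n=1}^{\lfloor p\rfloor} \sum_{i_{1},\ldots,i_{n}} V_{i_{1}}\cdots V_{i_{n}} \id(Z^{\ep}_{s})\, \bx^{\ep,\bn;i_{1},\ldots,i_{n}}_{st} + R^{\ep}_{st},
\end{equation*}
where the differential operators are applied to the identity, and $R^{\ep}$ is a remainder. Using the $\cac^{\gamma}$-Lipschitz hypothesis on the $V_{i}$ together with the multiplicativity relation \eqref{eq:multiplicativity}, one shows that $R^{\ep}$ has finite $\frac{p}{\lfloor p\rfloor+1}$-variation, which is better than $1/p$-H\"older scaling. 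The sewing/Davie lemma then yields a bound on $\|Z^{\ep}\|_{p\text{-var};[s,t]}$ purely in terms of $\max_{n\le\lfloor p\rfloor}\|\mathbf{X}^{\ep,\mathbf{n}}\|_{\frac{p}{n}\text{-var}}$ and the Lipschitz norms of the $V_{i}$, on a time interval whose length depends only on these data. Patching these local estimates provides a uniform bound on $[0,T]$.

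Next I would establish continuity of the It\^o--Lyons map. Running the same Taylor expansion argument on the difference $Z^{\ep}-Z^{\ep'}$, one obtains an estimate of the form $\|Z^{\ep}-Z^{\ep'}\|_{p\text{-var}} \le C \sum_{n\le\lfloor p\rfloor} \|\mathbf{X}^{\ep,\bn}-\mathbf{X}^{\ep',\bn}\|_{\frac{p}{n}\text{-var}}$; since the geometricity assumption guarantees $\mathbf{X}^{\ep,\bn}\to\mathbf{X}^{\bn}$ in $\frac{p}{n}$-variation, the family $\{Z^{\ep}\}$ is Cauchy and converges to some $Z\in\cac^{p\text{-var}}$. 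Passing to the limit in the approximate integral equation, using continuity of rough integration of $V_{i}(Z)$ against $\mathbf{X}$ that follows from the same sewing argument applied to $\mathbf{X}$ directly, shows $Z$ solves \eqref{eq:rde-intro}. Uniqueness follows by a Gronwall-type comparison of any two solutions using the same local expansion.

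The main obstacle is the first step: producing the local a priori bound on the remainder $R^{\ep}$ with an estimate that is independent of the smoothing parameter $\ep$. This is exactly where the $\gamma>p$ regularity of the vector fields and the multiplicativity \eqref{eq:multiplicativity} combine nontrivially, and it is the heart of Lyons' extension theorem; once this step is in place, Cauchy property, passage to the limit, and uniqueness are routine. Since the result is classical, in a compact paper I would ultimately refer to the detailed arguments in \cite{FH,Gu} after outlining this plan.
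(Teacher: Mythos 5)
Your outline is a correct sketch of the standard Lyons--Davie--Friz--Victoir argument (local Taylor expansion, sewing, continuity of the It\^o--Lyons map under the Lip-$\gamma$, $\gamma>p$ hypothesis), and this is exactly what the paper does as well: it offers no proof of its own and simply refers to \cite{FH,Gu}, the same sources you defer to. So the proposal matches the paper's approach, and no further comparison is needed.
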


In the remainder of the article we assume that $X_t=(X_t^1,...,X_t^d)$ is a continuous, centered Gaussian process with i.i.d. components, defined on a complete probability space $(\Omega, \cf, \mp)$. The covariance function of $X$ is defined as follows
\begin{equation}\label{eq:def-covariance-X}
R(s,t):=\EE\lc X_{s}^{j} X_{t}^{j}\rc, 
\end{equation}
where $X^{j}$ is any of the components of $X$. We shall also use the following notation in the sequel
\begin{equation}\label{eq:def-variance-Xt}
\si_{t}^{2} := \EE\lc  \lp  X_{t}^{j} \rp^{2} \rc,
\quad\text{and}\quad
\si_{s,t}^{2} := \EE\lc  \lp  \delta X_{st}^{j} \rp^{2} \rc .
\end{equation}
A lot of the information concerning $X$ is encoded in the rectangular increments of the covariance function $R$, which is given by
\begin{equation}\label{eq:rect-increment-cov-fct}
R_{uv}^{st} := \EE\lc (X_t^j-X_s^j) \, (X_v^j-X_u^j) \rc.
\end{equation}
The 2D $\rho$-variation of $R$ on a rectangle $[0,t]^2$ is given by
\begin{equation}\label{eq:def-2d-rho-var}
V_{\rho}(R; [0,t]^2) := 
\sup \lcl  
\lp \sum_{i,j} \lln R_{s_{i} s_{i+1}}^{t_{j}t_{j+1}} \rrn^{\rho} \rp^{1/\rho}; \, (s_i), (t_j)\in \Pi
\rcl,
\end{equation}
where $\Pi$ is the set of partitions of {\color{blue}$[0,t]$}. For simplicity, we denote $V_\rho(R)=V_\rho(R;[0,T]^2)$ in the following. 
The following result (borrowed from \cite{FV-bk}) relates the $\rho$-variation of $R$ with the pathwise assumptions allowing to apply the abstract rough paths theory.

\begin{proposition}\label{prop:Gaussian-rough-path}
Let $X=(X^1,\ldots,X^d)$ be a continuous, centered Gaussian process with  i.i.d.\ components and covariance function $R$ defined by \eqref{eq:def-covariance-X}. If $R$ has finite 2D $\rho$-variation for some $\rho\in[1,2)$, {then almost surely $X$ can be lifted to a geometric $p$-rough path with $p>2\rho$.} 
\end{proposition}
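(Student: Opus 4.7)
The plan is to follow the classical Friz--Victoir construction of Gaussian rough paths, combining a $2$D Young integration argument against the covariance $R$, Gaussian hypercontractivity, and a variation-based Kolmogorov criterion. First I would approximate $X$ by its piecewise linear interpolations $X^{\ep}$ along a dyadic sequence of partitions $\Pi^{\ep}$. Since each $X^{\ep}$ is smooth, the iterated integrals
\begin{equation*}
\mathbf{X}^{\ep,\mathbf{n};i_1,\dots,i_n}_{st}=\int_{\Delta_{st}^{n}} dX^{\ep,i_1}_{u_1}\cdots dX^{\ep,i_n}_{u_n},\qquad 1\le n\le \lfloor p\rfloor,
\end{equation*}
are well-defined in the Riemann--Stieltjes sense and automatically satisfy Chen's relation \eqref{eq:multiplicativity}. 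The geometricity condition of Definition \ref{def:RP}(3) will hold for free once we establish convergence, since the $\mathbf{X}^{\ep,\mathbf{n}}$ are built from piecewise linear approximations by construction.

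The analytic heart of the argument is an $L^{2}$ estimate of the form
\begin{equation*}
\EE\bigl[\bigl|\mathbf{X}^{\ep,\mathbf{n}}_{st}-\mathbf{X}^{\ep',\mathbf{n}}_{st}\bigr|^{2}\bigr]^{1/2}\le C\,\omega_{\ep,\ep'}(s,t)^{n/2},
\end{equation*}
where $\omega_{\ep,\ep'}$ is a $2$D control of finite $\rho$-variation associated with the covariance of $X^{\ep}-X^{\ep'}$, together with the uniform bound $V_{\rho}(R^{\ep};[s,t]^{2})\le V_{\rho}(R;[s,t]^{2})$. To derive it, I would exploit the independence of the components of $X$ and Gaussian Fubini in order to rewrite the second moment of $\mathbf{X}^{\ep,\mathbf{n};i_1,\dots,i_n}_{st}$ as a $2n$-fold integral against a product of rectangular increments of $R$, and then iteratively apply $2$D Young estimates in the sense of \cite{FV-bk}. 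The diagonal components $\mathbf{X}^{\ep,\mathbf{n};i,\dots,i}$ are handled algebraically by geometricity, so the work really concerns the off-diagonal parts. The assumption $\rho<2$ is exactly what guarantees that the required Young integrals exist, since the sum of the variation exponents in each variable, $\frac{1}{\rho}+\frac{1}{\rho}$, is strictly greater than one.

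Once the $L^{2}$ bound is in place, Gaussian hypercontractivity on the $n$-th inhomogeneous Wiener chaos upgrades it to
\begin{equation*}
\EE\bigl[\bigl|\mathbf{X}^{\ep,\mathbf{n}}_{st}-\mathbf{X}^{\ep',\mathbf{n}}_{st}\bigr|^{q}\bigr]^{1/q}\le C_{q,n}\,\omega_{\ep,\ep'}(s,t)^{n/2}
\end{equation*}
for every $q\ge 2$. Feeding these moment bounds, together with the analogous uniform estimates for $\mathbf{X}^{\ep,\mathbf{n}}_{st}$ itself, into the $p$-variation Kolmogorov-type criterion for rough paths (choosing $q$ large and using the margin provided by $p>2\rho$) then gives almost sure convergence of $\mathbf{X}^{\ep,\mathbf{n}}$ to a limit $\mathbf{X}^{\mathbf{n}}$ in $\frac{p}{n}$-variation. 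Multiplicativity, geometricity, and the required regularity are preserved in the limit, producing the geometric $p$-rough path lift of $X$.

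The main obstacle is the $L^{2}$ estimate: the $2n$-dimensional integral representing $\EE[|\mathbf{X}^{\ep,\mathbf{n}}_{st}|^{2}]$ has to be decomposed carefully and controlled by iterated $2$D Young estimates in such a way that the resulting bound depends on $[s,t]$ only through the local control $V_{\rho}(R;[s,t]^{2})$, rather than through a global constant. It is precisely this locality, combined with $p>2\rho$ in the Kolmogorov step, that yields convergence in $p/n$-variation as opposed to mere existence of a lift on $[0,T]$.
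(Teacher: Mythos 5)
Your outline is essentially the argument the paper relies on: the paper gives no proof of this proposition and simply cites Friz--Victoir \cite{FV-bk}, whose construction is exactly what you describe (piecewise linear approximations, second-moment estimates via 2D Young integration against $R$ using $1/\rho+1/\rho>1$, hypercontractivity to all moments, and a Kolmogorov/Besov-type $p$-variation criterion exploiting $p>2\rho$ to pass to the limit and obtain a geometric lift). So the proposal is correct and takes the same route as the cited proof.
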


\noindent
As a direct application of Theorem \ref{thm:exist-uniq-rde-rough} and Proposition \ref{prop:Gaussian-rough-path}, we notice that whenever a Gaussian process $X$ admits a covariance function $R$ with finite $2D$ $\rho$-variation (and $\rho\in[1,2)$), then, almost surely, equation \eqref{eq:rde-intro} driven by $X$ admits a unique solution in the rough path sense. In the sequel we shall give some information about the law of this solution $Z$.

\subsection{Wiener space associated to Gaussian processes}\label{sec:wiener-space-general}

In this section we consider again the continuous, centered Gaussian process $X$ of Section \ref{sec:rough-path-above-X}. Recall that its covariance function $R$ is defined by \eqref{eq:def-covariance-X}. Our analysis is based on two different (though related) Hilbert spaces $\tch$ and $\ch$. Roughly speaking, the space $\ch$ is the usual Cameron-Martin  space of $X$, while $\tch$ is the space allowing a proper definition of Wiener integrals as defined e.g in \cite{Nu06}.

The Cameron-Martin space $\ch$ is defined to be the
completion of the linear space of functions of the form
\[
\lcl
\sum_{i=1}^{n}a_{i}R\left(  t_{i},\cdot\right)  ,\quad a_{i}\in
\mathbb{R}
\text{ and }t_{i}\in\left[  0,T\right]  \rcl ,
\]
with respect to the following inner product
\begin{equation}\label{eq:def-inner-pdt-bar-H}
\left\langle \sum_{i=1}^{n}a_{i}R\left(  t_{i},\cdot\right)  ,\sum_{j=1}%
^{m}b_{j}R\left(  s_{j},\cdot\right)  \right\rangle _{\ch}=\sum
_{i=1}^{n}\sum_{j=1}^{m}a_{i}b_{j}R\left(  t_{i},s_{j}\right)  .
\end{equation}
The space $\tch$ is defined similarly, but this time we are considering the completion of  the set of step functions
\[
\mathcal{E}
=
\left\{  \sum_{i=1}^{n}a_{i} \1_{\left[  0,t_{i}\right]  }:a_{i}\in%
\mathbb{R}
\text{, }t_{i}\in\left[  0,T\right]  \right\}  ,
\]
with respect to  the inner product%
\begin{equation}\label{eq:def-inner-pdt-H}
\left\langle \sum_{i=1}^{n} a_{i} \1_{[0,t_{i}]}  ,
\sum_{j=1}^{m}b_{j} \1_{[0,s_{j}]}  \right\rangle _{\tch}
=
\sum_{i=1}^{n}\sum_{j=1}^{m}a_{i}b_{j}R\left(  t_{i},s_{j}\right) .
\end{equation}
{
\begin{remark}\label{representation H norm}
Let $X_0=0$ and thus $R(0,0)=0$. Then, as suggested by \eqref{eq:def-inner-pdt-H}, for any $h_1, h_2\in\tch$, we have
\begin{align}\label{rep H norm}\langle h_1, h_2\rangle_\tch=\int_0^T\int_0^Th_1(s)h_2(t)dR(s,t),\end{align}
whenever the 2D Young's integral on the right-hand side is well-defined (see, e.g., \cite[Proposition 4]{CFV} for details).
\end{remark}
}

Since $\tch$ is the completion of $\ce$ w.r.t $\left\langle
\cdot,\cdot\right\rangle _{\tch}$, it is obvious that the
linear map $\crr:\mathcal{E}\rightarrow\ch$ defined by
\begin{equation}
\crr\left(  \1_{\left[  0,t\right]  }\right)  =R\left(  t,\cdot\right)
\label{isom}%
\end{equation}
extends to an isometry between $\tch$ and $\ch$. 

Let $H^{1}\left(  X\right)  \subseteq L^{2}\left(  \Omega,\mathcal{F},\mp\right)  $ be the $\left\vert \cdot\right\vert _{L^{2}\left(
\Omega\right)  }$-closure of the set%
\[
\left\{  \sum\nolimits_{i=1}^{n}a_{i}X_{t_{i}}^1:a_{i}\in%
\mathbb{R}
,\text{ }t_{i}\in\left[  0,T\right]  ,\text{ }n\in%
\mathbb{N}
\right\}  .
\]
We also recall that $\tch$ is isometric to $H^1(X)$ through the Wiener integral $X(\phi), \phi\in\tch$, where, in particular, we have that $X(\1_{[0,t]})=X_t$.

\begin{remark}\label{rmk:H-on-subinterval}
Since the space $\tch$ is a closure of indicator functions, it is easily defined on any interval $[a,b]\subset[0,T]$. We denote by $\tch([a,b])$ this restriction. For $[a,b]\subset[0,T]$, one can then check the following identity by a limiting procedure on simple functions
\begin{equation}\label{eq:norm-H-as-2d-young}
\lla f \, \1_{[a,b]} , \, g \, \1_{[a,b]} \rra_{\tch}
=
\lla f  , \, g  \rra_{\tch([a,b])}.
\end{equation}
\end{remark}

The rough path analysis of Gaussian processes relies heavily on embedding results for the Cameron-Martin space $\ch$ into spaces of functions of finite $p$-variation. In the following we shall recall a recent embedding result from \cite{FGGR13}. To this aim, 
 let us recall the definition of the mixed $(\gamma,\rho)$-variation
given in \cite{Tow02}.

\begin{definition}\label{def:mixed-variation}
For a general continuous function $R:[0,T]^{2}\to\R$ and two parameters $\gamma,\rho\geq1$, we set 
\begin{align} 
V_{\gamma,\rho}(R;[s,t]\times[u,v]):
=\sup_{\substack{(t_{i})\in\mathcal{D}([s,t])\\
(t_{j}^{\prime})\in\mathcal{D}\left(\left[u,v\right]\right)
}
}\left(\sum_{t'_{j}}\left(\sum_{t_{i}}\left|R_{t_{i}t_{i+1}}^{t_{j}^{\prime}t_{j+1}^{\prime}}\right|^{\gamma}\right)^{\frac{\rho}{\gamma}}\right)^{\frac{1}{\rho}},\label{eq:mixed_var}
\end{align}
where $\mathcal{D}([s,t])$ denotes the set of all dissections of $[s,t]$ and where we have set
\begin{equation*}
R_{t_{i}t_{i+1}}^{t_{j}^{\prime}t_{j+1}^{\prime}}
=
R(t_{i+1},t_{j+1}^{\prime}) - R(t_{i+1},t_{j}^{\prime}) - R(t_{i},t_{j+1}^{\prime}) + R(t_{i},t_{j}^{\prime}).
\end{equation*}
\end{definition}

\noindent
Observe that, whenever the function $R$ in Definition \ref{def:mixed-variation} is given as a covariance function as in \eqref{eq:def-covariance-X}, then the rectangular increment $R_{t_{i}t_{i+1}}^{t_{j}^{\prime}t_{j+1}^{\prime}}$ is given by \eqref{eq:rect-increment-cov-fct}. In addition, the $\rho$-variation of $R$ introduced in \eqref{eq:def-2d-rho-var} and invoked in Proposition \ref{prop:Gaussian-rough-path} is recovered as $V_{\rho}=V_{\rho,\rho}
$. 
As a last elementary remark, also notice that  
  $$V_{\gamma\vee\rho}(R;A)\leq V_{\gamma,\rho}(R;A)\leq V_{\gamma\wedge\rho}(R;A),
  $$ 
for all rectangles $A\subseteq[0,T]^{2}$. We set, for future use
\begin{equation}\label{eq:def-kappa}
\kappa_{s,t}^{2}:= V_{1,\rho}(R;[s,t]^{2}),
\quad\text{and}\quad
\kappa_{t}^{2}:= V_{1,\rho}(R;[0,t]^{2}).
\end{equation}

With these elementary notions at hand, we next introduce an hypothesis which allows the use of both rough paths techniques and tools from stochastic analysis for the underlying process. 

\begin{hypothesis}\label{hyp:mixed-var-R}
Let $X$ be a $d$-dimensional continuous, centered Gaussian process with i.i.d.\ components and covariance $R$ defined by \eqref{eq:def-covariance-X}. We assume that the function $R$ admits a finite mixed $(1,\rho)$-variation, as introduced in Definition \ref{def:mixed-variation}, for some $\rho\in[1,2)$.
\end{hypothesis}

\begin{remark}\label{rmk: existence rp}
Since the mixed $(1,\rho)$-variation of $R$ controls $V_\rho(R)$, Proposition \ref{prop:Gaussian-rough-path} and Hypothesis \ref{hyp:mixed-var-R} imply the existence of a rough path lift of $X$ to a geometric $p$-rough path with $p>2\rho$.
\end{remark} 

\begin{definition}\label{def:holder-controlled}
Given $\rho\in[1,2)$, we say that $R$ has finite H\"{o}lder-controlled mixed $(1,\rho)$-variation if there exists a $C>0$ such that for all $0\leq s\leq t\leq T$ we have 
$$V_{1,\rho}(R; [s,t]^2)\leq C(t-s)^{1/\rho}.$$
\end{definition}

\begin{remark}
An important consequence of $R$ having finite H\"{o}lder-controlled mixed $(1,\rho)$-variation is that $\bf{X}$ has $1/p$-H\"{o}lder continuous sample paths for every $p>2\rho$. {This will be needed in order to obtain the interpolation inequality in Proposition \ref{th: interpolation} below which plays an important role in the analysis.}
\end{remark}

\begin{remark}
Similarly to the argument in \cite[Remark 2.4]{CHLT}, for any process $X$ satisfying Hypothesis \ref{hyp:mixed-var-R}, one can introduce a deterministic time-change $\tau:[0,T]\to[0,T]$ such that $\tilde{X}=X\circ\tau$ has finite H\"{o}lder-controlled mixed $(1,\rho)$-variation.
\end{remark}
  
We are now ready to recall an embedding result for the Cameron-Martin space $\ch$, obtained in \cite{FGGR13}.

\begin{theorem}\label{thm:refined_CM_embedding} 
Let $X$ be a centered Gaussian process satisfying Hypothesis \ref{hyp:mixed-var-R} and recall that $\ch$ is defined by the inner product \eqref{eq:def-inner-pdt-bar-H}. Then there is a
continuous embedding 
\begin{equation*}
\ch\hookrightarrow C^{q\mathrm{-var}},
\quad\text{ with }\quad
q=\frac{1}{\frac{1}{2\rho}+\frac{1}{2}}<2. 
\end{equation*}
More precisely, the following inequality holds true
\begin{align*}
\|h\|_{q-\mathrm{var};[s,t]}\leq \kappa_{s,t} \, \|h\|_{\ch},
\quad\forall[s,t]\subseteq[0,T],
\end{align*}
where the constant $\kappa_{s,t}$ is defined by \eqref{eq:def-kappa}.
\end{theorem}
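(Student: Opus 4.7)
The plan is to reduce the $q$-variation bound to a bilinear estimate for the covariance kernel, via the duality between $q$-variation and $\ell^{q'}$-sequences together with the reproducing property of $\bch$. Fix $[s,t] \subseteq [0,T]$, a partition $s = t_0 < \cdots < t_n = t$, and write $M_{ij} := R^{t_j t_{j+1}}_{t_i t_{i+1}}$ for the rectangular increments along it. Using $h(t_{i+1}) - h(t_i) = \langle h,\, R(t_{i+1},\cdot) - R(t_i,\cdot)\rangle_{\bch}$, the Cauchy-Schwarz inequality in $\bch$ gives, for any real $(a_i)_{i=0}^{n-1}$,
\[
\sum_i a_i\bigl(h(t_{i+1}) - h(t_i)\bigr)
= \Bigl\langle h,\, \sum_i a_i\bigl(R(t_{i+1},\cdot) - R(t_i,\cdot)\bigr)\Bigr\rangle_{\bch}
\leq \|h\|_{\bch}\,\Bigl(\sum_{i,j} a_i a_j M_{ij}\Bigr)^{1/2},
\]
where the quadratic form $\sum_{i,j}a_ia_jM_{ij}$ is exactly the squared $\bch$-norm of the linear combination, by \eqref{eq:def-inner-pdt-bar-H}. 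Choosing $a_i := \mathrm{sgn}(h(t_{i+1}) - h(t_i))\,|h(t_{i+1}) - h(t_i)|^{q-1}$ realizes the duality: $\sum_i a_i(h(t_{i+1}) - h(t_i)) = \sum_i|h(t_{i+1}) - h(t_i)|^q$ and $\|a\|_{\ell^{q'}}^{q'} = \sum_i|h(t_{i+1}) - h(t_i)|^q$, where from $1/q = 1/(2\rho) + 1/2$ one computes $q' = 2\rho/(\rho-1) = 2\rho'$, with $\rho' := \rho/(\rho-1)$.

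The heart of the proof is then the bilinear estimate
\[
\sum_{i,j} a_i a_j M_{ij} \;\leq\; V_{1,\rho}(R;[s,t]^2)\, \|a\|_{\ell^{q'}}^{2}.
\]
To establish it, I use nonnegativity of the left-hand side to replace $a_ia_jM_{ij}$ by $|a_i||a_j||M_{ij}|$. A first Cauchy-Schwarz in $j$ gives $\sum_j |a_j||M_{ij}| \leq (\sum_j |a_j|^2 |M_{ij}|)^{1/2}(\sum_j |M_{ij}|)^{1/2}$; after multiplying by $|a_i|$, summing in $i$ and applying a second Cauchy-Schwarz, the resulting two factors are equal thanks to the symmetry $M_{ij} = M_{ji}$ and collapse into a single expression $\sum_i |a_i|^2 r_i$ with $r_i := \sum_j |M_{ij}|$. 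A final H\"older inequality in $i$ with conjugate exponents $\rho'$ and $\rho$ then yields $\sum_i |a_i|^2 r_i \leq \|a\|_{\ell^{2\rho'}}^{2}\,(\sum_i r_i^\rho)^{1/\rho}$, and the second factor is exactly the contribution to $V_{1,\rho}(R;[s,t]^2)$ of the common choice of partition for both arguments, read in the order matching \eqref{eq:mixed_var} after using the symmetry of $R$. Since $2\rho' = q'$, the desired bilinear bound follows.

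Combining the two ingredients yields
\[
\sum_i |h(t_{i+1}) - h(t_i)|^q \leq \|h\|_{\bch}\,\kappa_{s,t}\,\Bigl(\sum_i |h(t_{i+1}) - h(t_i)|^q\Bigr)^{1/q'},
\]
and the identity $1 - 1/q' = 1/q$ rearranges this to $(\sum_i |h(t_{i+1}) - h(t_i)|^q)^{1/q} \leq \kappa_{s,t}\|h\|_{\bch}$; the supremum over partitions of $[s,t]$ finishes the proof. The main obstacle is the bilinear estimate with precisely the exponent $q' = 2\rho'$. A naive single H\"older inequality only produces a bound of the form $\|a\|_\infty\|a\|_{\ell^{\rho'}}V_{1,\rho}$, which cannot be repackaged into $\|a\|_{\ell^{q'}}^{2}V_{1,\rho}$ without a loss depending on the size of the partition. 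The double Cauchy-Schwarz followed by a single H\"older sketched above is designed to exploit both the symmetry of $M_{ij}$ and the absorption of the row-sum $r_i$ as a weight in the $i$-variable, and this is exactly what produces the genuine square of an $\ell^{2\rho'}$-norm needed to saturate the critical threshold exponent.
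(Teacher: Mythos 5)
Your proposal is correct. Note that the paper itself gives no proof of this statement: it is quoted from \cite{FGGR13}, and the argument there is of the same duality type that you reconstruct (write increments of $h$ via the reproducing property, test against an $\ell^{q'}$-dual sequence, and reduce to a bound on the Gaussian quadratic form $\sum_{i,j}a_ia_jR^{t_jt_{j+1}}_{t_it_{i+1}}$ by the mixed $(1,\rho)$-variation). Your verification of the key bilinear estimate is sound: the double Cauchy--Schwarz indeed collapses, by symmetry of the rectangular increments, to $\sum_i |a_i|^2 r_i$ with $r_i$ the row sums (this step can be shortened to the elementary bound $|a_i||a_j|\le \tfrac12(|a_i|^2+|a_j|^2)$), and the final H\"older with exponents $(\rho',\rho)$ produces exactly $\|a\|_{\ell^{2\rho'}}^2$ times a quantity dominated by $V_{1,\rho}(R;[s,t]^2)=\kappa_{s,t}^2$, since taking the same dissection in both variables is admissible in \eqref{eq:mixed_var} after using $R(u,v)=R(v,u)$. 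Two cosmetic points you may wish to add: the case $\rho=1$ (so $q=1$, $q'=\infty$) should be treated with the convention $a_i=\mathrm{sgn}(\delta h_i)$, $\|a\|_{\infty}\le1$, where the same chain of inequalities goes through; and for the embedding into $C^{q\mathrm{-var}}$ one should also record that $h$ is continuous, which follows from $|h(v)-h(u)|\le \sigma_{u,v}\|h\|_{\bch}$ and the continuity of $R$.
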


Finally we can give a statement which will be the basis of the interpretation of several integrals related to Malliavin derivatives 

\begin{corollary}
Let $X$ be a centered Gaussian process satisfying Hypothesis \ref{hyp:mixed-var-R} for a given $\rho\in[1,2)$, let $\ch$ be the Cameron-Martin space related to $X$ and let $\ep\in(0,2-\rho]$ small enough. Then

\noindent
\emph{(i)}
The process $X$ gives rise to a geometric $p$-rough path for $p=2\rho+\ep$. 

\noindent
\emph{(ii)}
The spaces $\ch$ and $\cac^{p-{\rm var}}$ satisfy Young's complementary condition: There exists a $q$ such that $\ch$ is embedded in $\cac^{q-{\rm var}}$ and such that $p^{-1}+q^{-1}>1$.
\end{corollary}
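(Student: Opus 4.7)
The plan is to read off both statements essentially from Proposition~\ref{prop:Gaussian-rough-path} and Theorem~\ref{thm:refined_CM_embedding}; the only real content is a short arithmetic check that the exponents are complementary in Young's sense.

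For part (i), I would start from the fact (already noted in Remark~\ref{rmk: existence rp}) that the mixed $(1,\rho)$-variation of $R$ dominates the $\rho$-variation, namely
\begin{equation*}
V_{\rho}(R;[0,T]^{2}) \;\leq\; V_{1,\rho}(R;[0,T]^{2}) \;<\; \infty,
\end{equation*}
which is exactly the hypothesis of Proposition~\ref{prop:Gaussian-rough-path}. That proposition therefore yields the existence of a geometric $p$-rough path above $X$ whenever $p>2\rho$, and since $\rho\in[1,2)$ we can freely take $p=2\rho+\ep$ for any $\ep>0$ (in particular for $\ep\in(0,2-\rho]$). This proves (i).

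For part (ii), I would let $q$ be the exponent provided by Theorem~\ref{thm:refined_CM_embedding}, namely
\begin{equation*}
\frac{1}{q} \;=\; \frac{1}{2\rho}+\frac{1}{2},
\end{equation*}
so that the continuous embedding $\bch\hookrightarrow\cac^{q-{\rm var}}$ is automatic. It then remains to check that $p^{-1}+q^{-1}>1$ for $p=2\rho+\ep$ and $\ep$ small enough. The key observation is the limit
\begin{equation*}
\lim_{\ep\downarrow 0}\left(\frac{1}{p}+\frac{1}{q}\right)
\;=\;\frac{1}{2\rho}+\frac{1}{2\rho}+\frac{1}{2}
\;=\;\frac{1}{\rho}+\frac{1}{2},
\end{equation*}
which is strictly greater than $1$ because $\rho<2$ forces $1/\rho>1/2$. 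By continuity in $\ep$, we have $p^{-1}+q^{-1}>1$ for every sufficiently small $\ep\in(0,2-\rho]$, which gives the claimed Young complementarity.

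I do not expect any real obstacle: both items are packaged consequences of the previously cited results, and the only genuine step is the short inequality $1/\rho+1/2>1$ which uses only the standing assumption $\rho\in[1,2)$. The mild subtlety is simply to make sure the smallness of $\ep$ is chosen uniformly in whatever later statements invoke this corollary, but this is a matter of bookkeeping rather than analysis.
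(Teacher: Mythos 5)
Your proposal is correct and follows essentially the same route as the paper: item (i) from Remark \ref{rmk: existence rp} (i.e.\ $V_\rho(R)\le V_{1,\rho}(R)$ plus Proposition \ref{prop:Gaussian-rough-path}), and item (ii) by taking $q=(\tfrac{1}{2\rho}+\tfrac{1}{2})^{-1}$ from Theorem \ref{thm:refined_CM_embedding} and checking $p^{-1}+q^{-1}>1$ for $p=2\rho+\ep$ with $\ep$ small, using $\rho<2$. Your explicit limit computation simply spells out the arithmetic the paper leaves as ``easily checked.''
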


\begin{proof}
Item (i) follows from Remark \ref{rmk: existence rp}. As far as item (ii) is concerned, we invoke Theorem \ref{thm:refined_CM_embedding} and we take $q=(\frac{1}{2\rho}+\frac{1}{2})^{-1}$. Since $\rho<2$ and since we have chosen $p=2\rho+\ep$ with $\ep$ small enough, it is easily checked that $p^{-1}+q^{-1}>1$.

\end{proof}

{
\begin{remark}\label{rmk:tch-identities}
   Let $X$ be a Gaussian process starting at zero and satisfying Hypothesis \ref{hyp:mixed-var-R} for a given $\rho\in[1,2)$. 
   \begin{enumerate}
   \item Let $f\in C^{p-var}([0,T])$ with $\frac{1}{p}+\frac{1}{\rho}>1$. Then, $f\in \tch$ and
     \begin{equation}\label{eq:ch-eq}
       \|f\|^2_\tch = \int_0^T\int_0^Tf(s)f(t)dR(s,t),
     \end{equation}
   where the right hand side is well-defined as a 2D-Young integral.
   In particular, $C^{p-var}([0,T]) \hookrightarrow \tch$ with 
     $$\|f\|^2_\tch \le C \|f\|_{p-var}^2 \|R\|_{\rho-var}.$$
   \item Let $h_1 \in C^{p-var}([0,T])$ with $\frac{1}{p}+\frac{1}{\rho}>1$, and $h_2\in\tch$. Then,
        $$\langle h_1, h_2 \rangle_{\tch([0,t])} =\int_0^t h_1 d \crr h_2,$$
      where the right hand side is well-defined as a Young-integral and where we recall that the isomorphism $\crr$ is defined by \eqref{isom}. 
   \end{enumerate}
\end{remark}
\begin{proof}
 (1): Since $R$ has finite 2D-$\rho$ variation and $f$ has finite $p$-variation with $\frac{1}{p}+\frac{1}{\rho}>1$ the 2D-Young integral is well-defined. For elementary processes, \eqref{eq:ch-eq} is immediate. Then, approximating $f$ by piece-wise constant approximants $f^n$, and noting that (cf.\ \cite[Exercise 6.9]{FV}) 
   $$\int_0^T\int_0^Tf^n(s)f^n(t)dR(s,t)\to \int_0^T\int_0^Tf(s)f(t)dR(s,t)$$ 
 the sequence $f^n$ is a Cauchy sequence (identified with $f$) in $\tch$ and the identity carries over to the limit.
 
 \noindent
 (2): By (1), $h_1 \in \tch$ and  $\crr h_2 \in \ch$ and, by Theorem \ref{thm:refined_CM_embedding}, $\crr h_2 \in C^{q-var}$ with $q=\frac{1}{\frac{1}{2\rho}+\frac{1}{2}}$. Since $\frac{1}{p}+\frac{1}{q} >1$, using $\rho\ge1$, this implies that the Young integral is well-defined. Since the identify is readily checked for elementary functions, the claim follows as in (1). 
\end{proof}
}

\subsection{Interpolation inequalities}

Interpolation inequalities involving Cameron-Martin spa\-ces are crucial in order to bound Malliavin derivatives which appear in density formulae. In this section we derive such inequalities for a general Gaussian process, under conditions introduced in \cite{CHLT,FGGR13}. The first condition we shall impose concerns correlations of increments.

\begin{hypothesis}\label{hyp:correlation-increments-X}
Let $X$ be an $\R^{d}$-valued centered Gaussian process $X$ with i.i.d.\ coordinates and covariance function $R$. In the sequel we assume that:

\noindent\emph{(i)}
$X$ has non-positively correlated increments, that is, for all $(t_{1},t_2,t_3,t_{4})\in\Delta^{4}$ and every coordinate $j=1,\ldots,d$ we have
\begin{equation}\label{eq:hyp-neg-correlated-increments}
R_{t_{1}t_{2}}^{t_{3}t_{4}}
=
\EE\lc \delta X_{t_{1}t_{2}}^{j}  \, \delta X_{t_{3}t_{4}}^{j} \rc \le 0.
\end{equation}

\noindent\emph{(ii)}
The covariance $R$ is diagonally dominant. That is, for all $(t_{1},t_2,t_3,t_{4})\in\Delta^{4}$ and every coordinate $j=1,\ldots,d$ we have
\begin{equation}\label{eq:hyp-diag-dominance-R}
R_{t_{2}t_{3}}^{t_{1}t_{4}}
=
\EE\lc \delta X_{t_{2}t_{3}}^{j}  \, \delta X_{t_{1}t_{4}}^{j} \rc \ge 0.
\end{equation}
\end{hypothesis}

With this Hypothesis at hand, we start with some inequalities which stem from the Cameron-Martin embedding Theorem~\ref{thm:refined_CM_embedding}.

\begin{proposition}\label{interpolation-general} 
Let $X$ be a Gaussian process starting from zero and satisfying Hypothesis \ref{hyp:mixed-var-R}. Further, {let 
$p\ge 1$ such that $1/p+1/\rho>1$.} Then 

\noindent\emph{(i)}
There exist  constants $c_1,c_2 >0$ such that for every {$f \in C^{p-var}([0,T])$}  and $t \in (0,T]$, we have
\[
 \| f \mathbf{1}_{[0,t]}  \|^2_\tch\le c_2  \, 
 \ka_{t}^{2} \lp \| f \mathbf{1}_{[0,t]}\|_{p-var}^2+\|f \mathbf{1}_{[0,t]}\|_\infty^2 \rp ,
\]
where $\ka_{t}$ is as in \eqref{eq:def-kappa}.

\noindent\emph{(ii)}
Assume that $X$ satisfies Hypothesis \ref{hyp:correlation-increments-X} and let $\mathcal{C}^\gamma$ be the space of $\gamma$-H\"{o}lder continuous functions. Then, for any {$f\in \mathcal{C}^\gamma$} with $1/\rho+\gamma>1$,
\begin{equation}\label{eq:low-bnd-norm-H-general}
  \|f \mathbf{1}_{[0,t]}\|^2_\tch 
  \ge \int_0^t f^2(r) R(dr,t) 
  \ge \si_{t}^{2} \min_{[0,t]} |f|^2  ,
\end{equation}
where $\si_{t}^{2}$ is as in \eqref{eq:def-variance-Xt}.
\end{proposition}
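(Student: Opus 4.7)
My plan is to handle part~(i) by a direct application of a 2D Young--Towghi estimate, and part~(ii) by a step-function argument that uses a symmetrization trick together with the non-positive correlation of increments. For part~(i), I would start from Remark~\ref{representation H norm} localized on $[0,t]$ via \eqref{eq:norm-H-as-2d-young}, which gives
\[
\|f\mathbf{1}_{[0,t]}\|_{\mathcal{H}}^{2}=\int_{[0,t]^{2}}f(s)\,f(r)\,dR(s,r),
\]
understood as a 2D Young integral. Since $R$ has finite mixed $(1,\rho)$-variation $\kappa_{t}^{2}$ on $[0,t]^{2}$ and $f$ has finite $p$-variation with $1/p+1/q>1$, the 2D Young--Towghi estimate, in the form used in \cite{FGGR13}, gives
\[
\Bigl|\int_{[0,t]^{2}}f(s)\,f(r)\,dR(s,r)\Bigr|\le C\,\kappa_{t}^{2}\bigl(\|f\|_{p\text{-var};[0,t]}^{2}+\|f\|_{\infty;[0,t]}^{2}\bigr),
\]
which is the announced bound.

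For the elementary inequality in part~(ii), the diagonal dominance of Hypothesis~\ref{hyp:correlation-increments-X}(ii) applied to the nested pair $[r_{1},r_{2}]\subseteq[0,t]$ yields $R(r_{2},t)-R(r_{1},t)\ge 0$, so $R(dr,t)$ is a non-negative measure of total mass $R(t,t)-R(0,t)=\sigma_{t}^{2}$, giving $\int_{0}^{t}f^{2}(r)R(dr,t)\ge\sigma_{t}^{2}\min_{[0,t]}|f|^{2}$. The main inequality $\|f\mathbf{1}_{[0,t]}\|_{\mathcal{H}}^{2}\ge\int_{0}^{t}f^{2}(r)R(dr,t)$ I would establish first for step functions $f^{n}=\sum_{i}a_{i}\mathbf{1}_{[t_{i-1}^{n},t_{i}^{n}]}$ attached to a partition of $[0,t]$. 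Writing $\mathbf{1}_{[t_{i-1}^{n},t_{i}^{n}]}=\mathbf{1}_{[0,t_{i}^{n}]}-\mathbf{1}_{[0,t_{i-1}^{n}]}$ and using the Wiener integral isometry, together with $X_{t}=\sum_{j}\delta X_{j}$ where $\delta X_{j}:=\delta X^{1}_{t_{j-1}^{n}t_{j}^{n}}$, one finds
\[
\|f^{n}\|_{\mathcal{H}}^{2}=\sum_{i,j}a_{i}a_{j}\,\mathbb{E}[\delta X_{i}\delta X_{j}], \qquad \int_{0}^{t}(f^{n})^{2}\,R(dr,t)=\sum_{i}a_{i}^{2}\sum_{j}\mathbb{E}[\delta X_{i}\delta X_{j}].
\]
Subtracting, isolating the diagonal terms, and symmetrizing in $(i,j)$, the difference collapses to
\[
\|f^{n}\|_{\mathcal{H}}^{2}-\int_{0}^{t}(f^{n})^{2}R(dr,t)=-\tfrac{1}{2}\sum_{i\ne j}(a_{i}-a_{j})^{2}\,\mathbb{E}[\delta X_{i}\delta X_{j}]\ge 0,
\]
the non-negativity being exactly the non-positive correlation of increments (Hypothesis~\ref{hyp:correlation-increments-X}(i)).

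To finish I would pass to the limit along dyadic partitions of $[0,t]$. Choosing $p$ such that $1/p+1/q>1$ and $p\gamma>1$ simultaneously (allowed by the hypothesis $\gamma+1/\rho>1$), H\"older continuity of $f$ forces $\|f-f^{n}\|_{\infty}+\|f-f^{n}\|_{p\text{-var}}\to 0$, so part~(i) gives $\|f^{n}\|_{\mathcal{H}}\to\|f\|_{\mathcal{H}}$. Simultaneously, $R(\cdot,t)$ has finite $1$-variation bounded by $\kappa_{t}$ (apply the definition of $V_{1,\rho}$ with the trivial outer partition $\{0,t\}$), so uniform convergence of $(f^{n})^{2}$ to $f^{2}$ and dominated convergence give $\int_{0}^{t}(f^{n})^{2}R(dr,t)\to\int_{0}^{t}f^{2}R(dr,t)$. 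Taking the limit in the step-function inequality yields~(ii).

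The main obstacle I anticipate is the coordinated passage to the limit in the last step: one must verify that the single regularity threshold $\gamma+1/\rho>1$ is simultaneously strong enough for the $\mathcal{H}$-norm to admit the 2D Young representation needed in part~(i) and for the piecewise constant approximations of $f$ to converge to $f$ in a $p$-variation norm with $p$ satisfying the complementary Young condition. The purely algebraic core of the argument, by contrast, reduces to the clean symmetrization identity above, which is an immediate consequence of Hypothesis~\ref{hyp:correlation-increments-X}(i).
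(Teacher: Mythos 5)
Your part (ii) is essentially sound: the symmetrization identity
$\|f^{n}\|_{\mathcal{H}}^{2}-\int_0^t (f^{n})^{2}R(dr,t)=-\tfrac12\sum_{i\ne j}(a_i-a_j)^2\,\EE[\delta X_i\delta X_j]\ge0$
is a clean repackaging of the paper's diagonal/off-diagonal estimate based on $2|a_i||a_j|\le a_i^2+a_j^2$, the identification of $R(dr,t)$ as a non-negative measure of total mass $\si_t^2$ is exactly the paper's use of diagonal dominance, and your limiting procedure (choosing $p$ with $\gamma p>1$ and $1/p+1/q>1$, then using (i) to get $\|f^n\|_{\mathcal H}\to\|f\|_{\mathcal H}$) is a legitimate alternative to the paper's argument, which instead passes to the limit through Riemann sums of the 2D Young integral under $\gamma+1/\rho>1$ and never invokes (i).

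However, your proof of part (i) has a genuine gap, and since (ii)'s limit step leans on (i), the gap propagates. A direct 2D Young--Towghi estimate for $\int_{[0,t]^2}f(s)f(r)\,dR(s,r)$ with $R$ of finite mixed $(1,\rho)$-variation pairs the $p$-variation of $f$ against the $\rho$-variation direction of $R$, hence requires $1/p+1/\rho>1$; it does not hold under the weaker hypothesis $1/p+1/q>1$ with $q=(\tfrac1{2\rho}+\tfrac12)^{-1}$. Since $1/q=\tfrac1{2\rho}+\tfrac12\ge\tfrac1\rho$, one has $q\le\rho$ with strict inequality for $\rho>1$, so there are admissible $p$ (e.g.\ $p$ slightly larger than $2\rho$ with $\rho\ge 3/2$, precisely the complementary-Young regime the proposition is designed to cover) for which $1/p+1/q>1$ but $1/p+1/\rho<1$; in that range even the 2D Young representation of $\|f\mathbf{1}_{[0,t]}\|_{\mathcal H}^2$ you start from is not justified for a general $f$ of finite $p$-variation. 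The improvement from $\rho$ to $q$ is not a 2D Young fact but a Cameron--Martin fact: the paper's proof writes $\langle f,f\rangle_{\mathcal H([0,t])}=\int_0^t f\,d(\crr f)$ as a one-dimensional Young integral, bounds it by $C(\|f\|_{p\text{-var};[0,t]}+\|f\|_{\infty;[0,t]})\|\crr f\|_{q\text{-var};[0,t]}$ using $1/p+1/q>1$, and then crucially invokes the embedding Theorem \ref{thm:refined_CM_embedding} to get $\|\crr f\|_{q\text{-var};[0,t]}\le\ka_t\|f\|_{\mathcal H([0,t])}$, after which dividing by $\|f\|_{\mathcal H([0,t])}$ gives the claim. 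Without that embedding (or an equivalent input from \cite{FGGR13}), your version of (i) only proves the statement under the stronger condition $1/p+1/\rho>1$.
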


\begin{remark}
Equation \eqref{eq:low-bnd-norm-H-general} above is in fact a consequence of \cite[Proposition 6.6]{CHLT}, by taking $s=0$ and $t=T$ therein. We have included a more elementary proof here for sake of clarity.
\end{remark}

\begin{proof}[Proof of Proposition \ref{interpolation-general}]

We prove the two items of this proposition separately.

\noindent\emph{Proof of (i).}
Recall that the spaces $\tch([a,b])$ are introduced in Remark \ref{rmk:H-on-subinterval}. { By Remark \ref{rmk:tch-identities}}, the following relation holds true, for any $h_1,h_2 \in C^{p-var}([0,T])$, 
\[
\langle h_1, h_2 \rangle_{\tch([0,t])} =\int_0^t h_1 d \crr h_2.
\]
Hence, if $p^{-1}+q^{-1}>1$, classical inequalities for Young's integral imply
\begin{equation}\label{a1}
| \langle h_1, h_2 \rangle_{\tch([0,t])} | 
\le 
C( \| h_1\|_{p-var;[0,t]} +\| h_1\|_{\infty;[0,t]} ) \| \crr h_2 \|_{q-var;[0,t]}.
\end{equation}
We now use Theorem \ref{thm:refined_CM_embedding} to get the bound
\begin{equation*}
 \| \crr h_2 \|_{q-var;[0,t]} 
  \le \ka_{t} \, \|\crr h_2\|_{\ch([0,t])}  
 = \ka_{t} \, \|h_2\|_{\tch([0,t])},
\end{equation*}
where we recall that we have set $\ka_{t}^{2}=V_{1,\rho}(R;[0,t]^{2})$.
Plugging this information back into~\eqref{a1} and choosing $h_{1}=h_{2}=f$, we obtain
\begin{align*}   
	\| f   \|_{\tch([0,t])}^2 
	&= | \langle f, f \rangle_{\tch([0,t])} | 
	\le C( \| f\|_{p-var;[0,t]} +\| f\|_{\infty;[0,t]} ) \| \crr f \|_{q-var;[0,t]} \\
	&\le C \ka_{t} ( \| f\|_{p-var;[0,t]} +\| f\|_{\infty;[0,t]} )  \|f\|_{\tch([0,t])}.
\end{align*}
Dividing this expression by $\|f\|_{\tch([0,t])}$ finishes the proof of claim (i).

\noindent\emph{Proof of (ii).} 
We first prove the claim for elementary step functions. Namely, consider $t\le T$, a partition $(t_i)$ of the interval  $[0,t]$, and set 
\begin{equation*}
f\mathbf{1}_{[0,t]}=\sum_i a_i  \mathbf{1}_{[t_i,t_{i+1}]} .
\end{equation*}
Then the following identity obviously holds true
\begin{equation*}
 \|f\mathbf{1}_{[0,t]}\|_\tch^2
 =
 \sum_{i,j}a_i a_j \lla \mathbf{1}_{[t_i,t_{i+1}]},\mathbf{1}_{[t_j,t_{j+1}]}\rra_\tch
 =
 \sum_{i,j}a_i a_j R_{t_{j}, t_{j+1}}^{t_{i}, t_{i+1}}.
\end{equation*}
We now separate diagonal and non-diagonal terms in order to get
\begin{equation}\label{a2}
\|f\mathbf{1}_{[0,t]}\|_\tch^2 
= 
\sum_{i}\sum_{j\ne i}a_i a_j R_{t_{j}, t_{j+1}}^{t_{i}, t_{i+1}} 
	 + \sum_{i}a_i^2 R_{t_{i}, t_{i+1}}^{t_{i}, t_{i+1}}
\ge S_{1}- S_{2},
\end{equation}
where $S_{1}$ and $S_{2}$ are defined by
\begin{equation*}
S_{1} = \sum_{i}a_i^2 R_{t_{i}, t_{i+1}}^{t_{i}, t_{i+1}},
\quad\text{and}\quad
S_{2} = \sum_{i}\sum_{j\ne i}|a_i| |a_j| \left|R_{t_{j}, t_{j+1}}^{t_{i}, t_{i+1}} \right|.
\end{equation*}
Next, in order to bound $S_{2}$ from above, we first invoke the elementary inequality $2 |a_i| |a_j| \le |a_i|^{2} + |a_j|^{2}$ to get
\begin{equation*}
S_{2} \le \frac{1}{2}\sum_{i}\sum_{j\ne i}a_i^2 \left|R_{t_{j}, t_{j+1}}^{t_{i}, t_{i+1}} \right|
		+\frac{1}{2}\sum_{i}\sum_{j\ne i} a_j^2 \left|R_{t_{j}, t_{j+1}}^{t_{i}, t_{i+1}} \right|.
\end{equation*}
Then, using \eqref{eq:hyp-neg-correlated-increments}, we get
\begin{equation*}
S_{2} \le
-\frac{1}{2}\sum_{i}\sum_{j\ne i}a_i^2 R_{t_{j}, t_{j+1}}^{t_{i}, t_{i+1}}
		-\frac{1}{2}\sum_{i}\sum_{j\ne i} a_j^2 R_{t_{j}, t_{j+1}}^{t_{i}, t_{i+1}}
		= -\sum_{i}\sum_{j\ne i}a_i^2 R_{t_{j}, t_{j+1}}^{t_{i}, t_{i+1}}.
\end{equation*}
Inserting this in \eqref{a2} yields
\begin{equation}\label{a3}
\|f\mathbf{1}_{[0,t]}\|_\tch^2 
   \ge \sum_{i,j}a_i^2  R_{t_{j}, t_{j+1}}^{t_{i}, t_{i+1}} 
   = \sum_{i}a_i^2  R_{0t}^{t_{i} t_{i+1}} .
\end{equation}
Let us observe that, owing to the diagonal dominance assumption \eqref{eq:hyp-diag-dominance-R}, the measure $R(dr,t)$ defined by
\begin{equation*}
R([u,v],t):= R_{0t}^{uv}
\end{equation*}
is non-negative. Furthermore, one can recast inequality \eqref{a3} as
\begin{equation*}
\|f\mathbf{1}_{[0,t]}\|_\tch^2 
   \ge
 \int_0^t f^2(r) R(dr,t) .
\end{equation*}
Using elementary properties of positive measures, we thus end up with
\begin{equation*}
   \|f\mathbf{1}_{[0,t]}\|_\tch^2 
   \ge  \min_{[0,t]} |f|^2 R_{0t}^{0t}
   =  \min_{[0,t]} |f|^2 \si_{t}^{2},
\end{equation*}
which proves the claim (ii) for elementary functions $f$.  {Finally, we show that the above remains true all $f\in\tch\cap \mathcal{C}^\gamma$.  Let $D=\{t_i: i=0,1,...,n\}$ be any partition of $[0,T]$, and set $f_D(t)=f(t_i), t_i\leq t<t_{i+1}$. Since $f_D$ is an elementary function, we have
$$\int_{[0,t]^2}f_D(s)f_D(t)dR(s,t)=\|f_D\mathbf{1}_{[0,t]}\|_\tch^2\geq\min_{[0,t]}|f_D|^2\sigma_t^2.$$
Note that we assume $f\in\mathcal{C}^\gamma$ with $1/\rho+\gamma>1$. The left hand-side of the above display is the Riemann sum approximation to the 2D Young integral of $f$ against $R$ along the partition $D$. Hence, if we shrink the mesh of the partition $D$, 
$$\int_{[0,t]^2}f_D(s)f_D(t)dR(s,t)\to\int_{[0,t]^2}f(s)f(t)dR(s,t)=\|f\mathbf{1}_{[0,t]}\|_\tch^2.$$
On the other hand, $\min_{[0,t]}|f_D|\to\min_{[0,t]}|f|$, when shrinking the mesh of $D$, by the construction of $f_D$ and the fact that $f$ is continuous.  The proof is thus completed.
}
\end{proof}

We now wish to get a non-degeneracy result for the norm in $\tch$, that is, a lower bound on $\|f\|_{\tch}$ involving $\|f\|_{\infty}$. {From \cite[Condition 2]{CHLT} we recall the following non-degeneracy condition.}

\begin{hypothesis} \label{assumption1}
Let $(X_t)_{t\in[0,T]}$ be a centered continuous $\R^{d}$-valued Gaussian process. For any $0\leq a\leq b\leq T$, denote by $\cf_{a,b}$ the following $\si$-algebra
\begin{equation*}
\cf_{a,b}=\sigma(\delta X_{uv}: a\leq u\leq v\leq b).
\end{equation*}
Then we assume that there exists an $\alpha>0$ such that
\begin{align}\label{non-determ}
\inf_{0\leq s<t\leq T}\frac{1}{(t-s)^\alpha}\var{\delta X_{st}|\cf_{0,s}\vee\cf_{t,T}}=c_X>0.
\end{align}
We call the smallest $\alpha$ that satisfies the above condition the index of non-determinism.
\end{hypothesis}

{ We recall that in  \cite[Lemma 4.1 and Lemma 4.2]{CHLT} the condition above is verified for any $\alpha \in (0,1]$ for the case of $X$ being a fractional Brownian motion with Hurst index $H\in (0,\frac{1}{2})$. }

\begin{remark}
Note that since we are working with Gaussian processes, the above conditional variance
$\var{\delta X_{st}|\cf_{0,s}\vee\cf_{t,T}}$ is deterministic. Moreover, assuming Hypothesis \ref{assumption1} holds true and setting $s=0$ in \eqref{non-determ}, the law of total variance gives us
$$
\si_{t}^{2}=\var{X_t}\geq \var{\delta X_{0t}|\cf_{0,0}\vee\cf_{t,T}}\geq c_Xt^\alpha ,
$$
with $\si_{t}^{2}$ as in \eqref{eq:def-variance-Xt}.
\end{remark}

With Hypothesis  \ref{assumption1} at hand, we borrow the following interpolation inequality from \cite[Corollary 6.10]{CHLT}.

\begin{proposition}\label{th: interpolation}
Let $(X_t)_{t\in[0,T]}$ be a continuous Gaussian process starting from zero with covariance function $R:[0,T]^2\to\mr$. Suppose Hypothesis \ref{hyp:correlation-increments-X} and \ref{assumption1} are satisfied. Furthermore, we assume that $R$ has finite H\"{o}lder-controlled mixed $(1,\rho)$-variation for some $\rho\in[1,2)$ in the sense of Definition \ref{def:holder-controlled}.  Then there exists a universal constant $c$ such that for any $f\in C^\gamma([0,T],\mr)$ with $\gamma+1/\rho>1$, we have
\begin{equation}\label{eq:interpolation-1}
\|f\|_{\infty;[0,T]}
\leq
2 \max\left\{
\frac{\|f\|_{\tch}}{\si_{T}}  , \
\frac{1}{\sqrt{c_{X}}}\|f\|_{\tch}^{\frac{2\gamma}{2\gamma+\alpha}} 
\|f\|_{\gamma;[0,T]}^{\frac{\alpha}{2\gamma+\alpha}}
\right\},
\end{equation}
where $c_{X}$ is the constant appearing in equation \eqref{non-determ} and $\si_{t}$ is defined by \eqref{eq:def-variance-Xt}. 
\end{proposition}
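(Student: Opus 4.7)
The plan is to combine a localized version of Proposition \ref{interpolation-general}(ii) with the non-determinism condition of Hypothesis \ref{assumption1}, then close the argument with a H\"older interpolation centered at a maximizer of $|f|$. The localized version I want first is the following: for every subinterval $[s,t]\subset[0,T]$ and every $f\in\ch\cap\cac^\gamma$ with $\gamma+1/\rho>1$,
\[
\|f\mathbf{1}_{[s,t]}\|_\ch^2 \;\geq\; \si_{s,t}^2 \, \min_{[s,t]}|f|^2,
\]
with $\si_{s,t}^2 = R_{st}^{st}$. The argument I would run is a verbatim adaptation of the proof of Proposition \ref{interpolation-general}(ii): on a partition $(t_i)$ of $[s,t]$, split $\|f\mathbf{1}_{[s,t]}\|_\ch^2 = \sum_{i,j} a_i a_j R_{t_i t_{i+1}}^{t_j t_{j+1}}$ into diagonal and off-diagonal parts, apply $2|a_i a_j|\leq a_i^2+a_j^2$ together with Hypothesis \ref{hyp:correlation-increments-X}(i) to bound the off-diagonal contribution, and recognize the surviving sum $\sum_i a_i^2 R_{st}^{t_i t_{i+1}}$ as the integral of $f^2$ against the nonnegative measure supplied by the diagonal dominance of $R$ (Hypothesis \ref{hyp:correlation-increments-X}(ii)). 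Passage from step functions to continuous H\"older $f$ proceeds via the two-dimensional Young Riemann-sum approximation already used in the proof of Proposition \ref{interpolation-general}(ii), and is legitimate thanks to Remark \ref{rmk:H-on-subinterval}.

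Hypothesis \ref{assumption1} then enters through the law of total variance: since conditional variances are deterministic in the Gaussian setting,
\[
\si_{s,t}^2 = \var{\delta X_{st}} \;\geq\; \var{\delta X_{st}\,|\,\cf_{0,s}\vee\cf_{t,T}} \;\geq\; c_X (t-s)^\alpha ,
\]
which combined with the previous display yields the key quantitative local estimate $\|f\mathbf{1}_{[s,t]}\|_\ch^2 \geq c_X (t-s)^\alpha \min_{[s,t]}|f|^2$. With this in hand, I would pick $t^*\in[0,T]$ with $|f(t^*)|=\|f\|_\infty$ and, motivated by the H\"older bound $|f(u)|\geq \|f\|_\infty - \|f\|_{\gamma;[0,T]}|u-t^*|^\gamma$, choose the optimal window half-width $\delta^*=(\|f\|_\infty/(2\|f\|_{\gamma;[0,T]}))^{1/\gamma}$. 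If $\delta^*\geq T$, then $|f|\geq \|f\|_\infty/2$ on all of $[0,T]$, and Proposition \ref{interpolation-general}(ii) directly produces the first branch $\|f\|_\infty\leq 2\|f\|_\ch/\si_T$ of the stated maximum. Otherwise, applying the local estimate on $I=[t^*-\delta^*,t^*+\delta^*]\cap[0,T]$ (whose length is at least $\delta^*$), solving $\|f\|_\ch^2\geq c_X(\delta^*)^\alpha\|f\|_\infty^2/4$ for $\|f\|_\infty$, and substituting $\delta^*$ delivers the second branch with the advertised exponents $2\gamma/(2\gamma+\alpha)$ and $\alpha/(2\gamma+\alpha)$.

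The main obstacle is the localized Step 1: the telescoping argument that was transparent on $[0,T]$ must be rerun on a general sub-interval $[s,t]$, checking that the leftover mass after cancelling off-diagonal contributions is exactly $R_{st}^{st}=\si_{s,t}^2$ rather than $\si_T^2$, and that the Young-integral limit on the restricted window is controlled by the identity $\langle f\mathbf{1}_{[s,t]},g\mathbf{1}_{[s,t]}\rangle_\ch = \langle f,g\rangle_{\ch([s,t])}$ from Remark \ref{rmk:H-on-subinterval}. The H\"older/non-determinism step, by contrast, is essentially algebra; the only cosmetic point is that the computation naturally produces a prefactor $c_X^{-\gamma/(2\gamma+\alpha)}$, which is absorbed into the $c_X^{-1/2}$ appearing in the statement up to a universal constant whenever $c_X\leq 1$, since $\gamma/(2\gamma+\alpha)\leq 1/2$.
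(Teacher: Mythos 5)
Your localized lower bound $\|f\1_{[s,t]}\|_{\ch}^2\ge \si_{s,t}^2\min_{[s,t]}|f|^2$ is fine (the telescoping/diagonal-dominance computation and the 2D Young limiting argument do go through on a subinterval, and $\si_{s,t}^2\ge c_X(t-s)^\alpha$ follows from the law of total variance as you say), and so is the H\"older optimization over the window width. The genuine gap is the passage from $\|f\1_I\|_{\ch}$ to $\|f\|_{\ch}$: writing ``solving $\|f\|_{\ch}^2\ge c_X(\delta^*)^\alpha\|f\|_\infty^2/4$'' silently uses the restriction monotonicity $\|f\1_I\|_{\ch}\le\|f\|_{\ch}$, which is neither proved nor implied by Hypotheses \ref{hyp:correlation-increments-X} and \ref{assumption1}, and is in fact false in general. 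For fractional Brownian motion with $H<1/2$ (which satisfies all standing hypotheses for $H\in(1/4,1/2)$), cross terms such as $\EE[\delta X_{st}\,\delta X_{0s}]$ are strictly negative; taking $f$ close to a large constant $a$ on $I=[s,t]$ and a small constant $b>0$ off $I$ gives $\|f\|_{\ch}^2=\|f\1_I\|_{\ch}^2+b^2\|\1_{I^c}\|_{\ch}^2+2ab\,\EE[\delta X_{st}(\delta X_{0s}+\delta X_{tT})]<\|f\1_I\|_{\ch}^2$ once $b$ is small, and smoothing preserves the strict inequality. Nor can you repair it by restricting the global measure $R(dr,T)$ to $[s,t]$: that only yields $\|f\|_{\ch}^2\ge\min_{[s,t]}|f|^2\,R_{0t\,\mathrm{(outer)}}$-type bounds with the quantity $R^{st}_{0T}$, which Hypothesis \ref{assumption1} does \emph{not} control from below (indeed $\mathrm{Var}(\delta X_{st}\,|\,\cf_{0,s}\vee\cf_{t,T})\ge R^{st}_{0T}$ under the standing hypotheses, not the reverse), and for fBm with $H<1/2$ this term is of the much smaller order $(t-s)T^{2H-1}$.

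For comparison, the paper does not prove the proposition at all; it quotes \cite[Corollary 6.10]{CHLT}, and the proof there localizes by \emph{conditioning} rather than by restriction: one writes $\|f\|_{\ch}^2=\mathrm{Var}(X(f))\ge \mathrm{Var}\bigl(X(f)\,|\,\cf_{0,s}\vee\cf_{t,T}\bigr)$, and then runs a conditional analogue of exactly your diagonal-dominance computation to get $\mathrm{Var}(X(f)\,|\,\cf_{0,s}\vee\cf_{t,T})\ge \min_{[s,t]}|f|^2\,\mathrm{Var}(\delta X_{st}\,|\,\cf_{0,s}\vee\cf_{t,T})\ge c_X(t-s)^\alpha\min_{[s,t]}|f|^2$. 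This conditional step is where the positivity of conditional covariances \eqref{eq:positive-cdt-covariance} is needed, and it is precisely for this reason that the paper's remark invokes \cite[Corollary 6.8]{CHLT} to deduce \eqref{eq:positive-cdt-covariance} from Hypothesis \ref{hyp:correlation-increments-X}. If you replace your restriction step by this conditioning step, the rest of your argument (choice of $t^*$, the window $\delta^*=(\|f\|_\infty/2\|f\|_{\gamma})^{1/\gamma}$, the two branches, and the bookkeeping of the $c_X$-exponent) closes the proof as intended.
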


\begin{remark}
In \cite{CHLT}, relation \eqref{eq:interpolation-1} is proved under the following additional hypothesis
\begin{equation}\label{eq:positive-cdt-covariance}
\mathrm{Cov} (X_{s,t} X_{u,v}|\cf_{0,s}\vee\cf_{t,S})\geq 0,
\end{equation}
for any $[u,v]\subset[s,t]\subset[0,S]\subset[0,T]$. However, we are working here under the standing assumptions \eqref{eq:hyp-neg-correlated-increments}, \eqref{eq:hyp-diag-dominance-R} in Hypothesis \ref{hyp:correlation-increments-X}, and it is shown in \cite[Corollary 6.8]{CHLT} that \eqref{eq:hyp-neg-correlated-increments} together with \eqref{eq:hyp-diag-dominance-R} implies \eqref{eq:positive-cdt-covariance}.
\end{remark}

\begin{remark}
Our interpolation inequality \eqref{eq:interpolation-1} also reads as
\begin{equation}\label{interpolation2}
\|f\|_{\tch}
\ge
\frac{\si_{T} \|f\|_{\infty;[0,T]}}{2}
\min\left\{1, \
\frac{2\left(\frac{c_{X}}{2}\right)^{\frac{2\gamma+\alpha}{4\gamma}}}{\si_{T}}
\frac{\|f\|_{\infty;[0,T]}^{\frac{\alpha}{2\gamma}}}{\|f\|_{\gamma;[0,T]}^{\frac{\alpha}{2\gamma}}}
\right\}.
\end{equation}
In fact we will use a slight generalization of \eqref{interpolation2} in the sequel. Namely, for all $t\le T$, Remark~\ref{rmk:H-on-subinterval} asserts that $\|f \1_{[0,t]}\|_{\tch}=\|f \|_{\tch([0,t])}$. We thus get the following interpolation inequality
\begin{equation}\label{interpolation3}
\| f \1_{[0,t]} \|_{\tch}
\ge
\frac{\si_{t} \|f\|_{\infty;[0,t]}}{2}
\min\left\{1, \
\frac{2\left(\frac{c_{X}}{2}\right)^{\frac{2\gamma+\alpha}{4\gamma}}}{\si_{t}}
\frac{\|f\|_{\infty;[0,t]}^{\frac{\alpha}{2\gamma}}}{\|f\|_{\gamma;[0,t]}^{\frac{\alpha}{2\gamma}}}
\right\}.
\end{equation}
\end{remark}

\subsection{Malliavin calculus for Gaussian processes}

In this section we review some basic aspects of Malliavin calculus. The reader is referred to \cite{Nu06} for further details.

As before $X_t=(X_t^1,...,X_t^d)$ is a continuous, centered Gaussian process with i.i.d.\ components, defined on a complete probability space $(\Omega, \cf, \mp)$. For sake of simplicity, we assume that $\cf$ is generated by $\{X_{t}; \, t\in[0,T]\}$. An $\mathcal{F}$-measurable real
valued random variable $F$ is said to be cylindrical if it can be
written, for some $m\ge 1$, as
\begin{equation*}
F=f\lp  X_{t_1},\ldots,X_{t_m}\rp,
\quad\mbox{for}\quad
0\le t_1<\cdots<t_m \le T,
\end{equation*}
where $f:\mathbb{R}^m \rightarrow \mathbb{R}$ is a $C_b^{\infty}$ function. The set of cylindrical random variables is denoted by~$\mathcal{S}$. 

\smallskip

The Malliavin derivative is defined as follows: for $F \in \mathcal{S}$, the derivative of $F$ in the direction $h\in\tch$ is given by
\[
\mathbf{D}_h F=\sum_{i=1}^{m}  \frac{\partial f}{\partial
x_i} \left( X_{t_1},\ldots,X_{t_m}  \right) \, h_{t_i}.
\]
More generally, we can introduce iterated derivatives. Namely, if $F \in
\mathcal{S}$, we set
\[
\mathbf{D}^k_{h_1,\ldots,h_k} F = \mathbf{D}_{h_1} \ldots\mathbf{D}_{h_k} F.
\]
For any $p \geq 1$, it can be checked that the operator $\mathbf{D}^k$ is closable from
$\mathcal{S}$ into $\mathbf{L}^p(\oom;\tch^{\otimes k})$. We denote by
$\mathbb{D}^{k,p}(\tch)$ the closure of the class of
cylindrical random variables with respect to the norm
\[
\left\| F\right\| _{k,p}=\left( \mathbb{E}\left[|F|^{p}\right]
+\sum_{j=1}^k \mathbb{E}\left[ \left\| \mathbf{D}^j F\right\|
_{\tch^{\otimes j}}^{p}\right] \right) ^{\frac{1}{p}},
\]
and we also set $\mathbb{D}^{\infty}(\tch)=\cap_{p \geq 1} \cap_{k\geq 1} \mathbb{D}^{k,p}(\tch)$. The divergence operator $\delta^{\diamond}$ is then defined to be the adjoint operator of $\mathbf{D}$.

\smallskip

Estimates of Malliavin derivatives are crucial in order to get information about densities of random variables, and Malliavin matrices as well as non-degenerate random variables will feature importantly in the sequel.
\begin{definition}\label{non-deg}
Let $F=(F^1,\ldots , F^n)$ be a random vector whose components are in $\mathbb{D}^\infty(\tch)$. Define the Malliavin matrix of $F$ by
\begin{equation} \label{malmat}
\gamma_F=(\langle \mathbf{D}F^i, \mathbf{D}F^j\rangle_{\tch})_{1\leq i,j\leq n}.
\end{equation}
Then $F$ is called  {\it non-degenerate} if $\gamma_F$ is invertible $a.s.$ and
$$(\det \gamma_F)^{-1}\in \cap_{p\geq1}L^p(\Omega).$$
\end{definition}
\noindent
It is a classical result that the law of a non-degenerate random vector $F=(F^1, \ldots , F^n)$ admits a smooth density with respect to the Lebesgue measure on $\mr^n$.

\subsection{Differential equations driven by Gaussian processes}
Recall that we consider the following kind of equation
\begin{equation}\label{eq:sde} 
Z^{z}_t =z +\int_0^t V_0 (Z^z_s)ds+
\sum_{i=1}^d \int_0^t V_i (Z^{z}_s) dX^i_s,
\end{equation}
where the vector fields $V_0,\ldots,V_d$ are $\cac_b^\infty$-vector fields on $\R^n$ and $X$ is a continuous, centered Gaussian process with i.i.d.\ components. Throughout this section, we assume that the covariance $R$ has finite 2D $\rho$-variation for some $\rho\in[1,2)$. Hence, as mentioned in Section \ref{sec:rough-path-above-X}, Proposition \ref{prop:Gaussian-rough-path}  implies the existence and uniqueness of  a solution to \eqref{eq:sde}.

Once equation (\ref{eq:sde}) is solved, the vector $Z_t^z$ is a typical example of  random variable which can be differentiated in the Malliavin sense. We shall express this Malliavin derivative in terms of the Jacobian $\bj$ of the equation, which is defined by the relation $\bj_{t}^{ij}=\partial_{z_j}Z_t^{z,i}$. Setting $DV_{j}$ for the Jacobian of $V_{j}$ as a function from $\R^{n}$ to $\R^{n}$, let us recall that $\bj$ is the unique solution to the linear equation
\begin{equation}\label{eq:jacobian}
\bj_{t} = \id_{n} + \int_0^t DV_0 (Z^z_s) \, \bj_{s} \, ds+
\sum_{j=1}^d \int_0^t DV_j (Z^{z}_s) \, \bj_{s} \, dX^j_s.
\end{equation}

The following integrability and differentiability results are summarized from \cite{CF,CLL, Inahama}.

\begin{proposition}\label{prop:deriv-sde}
Let $X$ be a continuous, centered $\R^{d}$-valued Gaussian process with i.i.d.\ components and covariance function $R$ having finite 2D $\rho$-variation for some $\rho\in[1,2)$.
Consider the solution $Z^z$ to (\ref{eq:sde}) and suppose that the vector fields $V_i$ are $\cac_b^\infty$. Then

\noindent\emph{(i)}
For any $\eta\ge 1$, there exists a finite constant $c_\eta$ such that the Jacobian $\bj$ defined by \eqref{eq:jacobian} satisfies
\begin{equation}
\EE\lc  \Vert \bj \Vert^{\eta}_{p-{\rm var}; [0,T]} \rc = c_\eta.
\end{equation}

\noindent\emph{(ii)}
For every $i=1,\ldots,n$, $t>0$, and $z \in \mathbb{R}^n$, we have $Z_t^{z,i} \in
\mathbb{D}^{\infty}(\tch)$ and the Malliavin derivative of $Z^z_t$ { can be realized as a function $\bd_sZ_t^z$ in $s\in[0,T]$ which satisfies 
\begin{equation}\label{eq:rep-malliavin-with-jacobian}
\mathbf{D}^j_s Z_t^{z}= \mathbf{J}_{s,t} V_j (Z^z_s),
\end{equation}
for all $j=1,\ldots,d$, $0\leq s \leq t$ and
$$\bd_s^jZ^z_t=0,$$
for all $s>t$.} Here $\mathbf{D}^j_s Z^{z,i}_t $ is the $j$-th component of
$\mathbf{D}_s Z^{z,i}_t$,  and where we have set $\bj_{s,t}=\bj_{t}\,\bj_{s}^{-1}$.
\end{proposition}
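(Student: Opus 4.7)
The plan is to treat the two items separately, using related but distinct tools: item (i) is a pathwise rough-path estimate combined with Gaussian tail bounds, while item (ii) is obtained by transporting classical Malliavin differentiability along smooth approximations and passing to the limit.

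For (i), equation \eqref{eq:jacobian} is a \emph{linear} RDE driven by the enhanced rough path of $X$, with bounded coefficients depending on $\|DV_j\|_\infty$ and higher derivatives of the $V_j$. Universal linear-RDE estimates (see \cite{FV-bk, FH}) yield a bound of the form
\begin{equation*}
\|\bj\|_{p\text{-var};[0,T]}\leq C\exp\bigl(C\,\|\bX\|_{p\text{-var};[0,T]}^{p}\bigr),
\end{equation*}
where $\bX$ denotes the full enhanced rough path associated with $X$. A naive application of this bound only gives the integrability of an exponential of an exponential quantity, which is not enough. The missing ingredient is the Cass--Litterer--Lyons integrability theorem: under Hypothesis \ref{hyp:mixed-var-R} (which enforces finite 2D $\rho$-variation of $R$), the variable $\|\bX\|_{p\text{-var};[0,T]}$ is controlled by a sum of increments over a greedy partition whose length has Gaussian tails. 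Plugging this into the deterministic bound above produces $\EE[\|\bj\|_{p\text{-var};[0,T]}^{\eta}]<\infty$ for every $\eta\geq 1$, which is the statement of (i).

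For (ii), let $X^{\eps}$ denote the piecewise linear approximations from Theorem \ref{thm:exist-uniq-rde-rough}, and let $Z^{\eps}$ and $\bj^{\eps}$ be the corresponding ODE solution and its Jacobian. Since $X^{\eps}$ is smooth, $Z_t^{\eps}$ is a classical smooth functional of $X^{\eps}$, and the variation-of-constants formula for linear ODEs gives, for any $h\in\ch$,
\begin{equation*}
\mathbf{D}_h Z_t^{\eps}=\sum_{j=1}^d\int_0^t \bj^{\eps}_{s,t}\,V_j\bigl(Z^{\eps,z}_s\bigr)\,dh_s^j.
\end{equation*}
The strategy is then: first, continuity of the Itô--Lyons solution map together with the moment bounds from (i) gives $\bj^{\eps}\to \bj$ in $p$-variation and in every $L^{\eta}(\Omega)$; second, the above integral is well-defined in the rough/Young sense thanks to the Young complementarity between $\bch$ and $\cac^{p\text{-var}}$ established in the corollary following Theorem \ref{thm:refined_CM_embedding}, and passes to the limit as $\eps\to 0$ inside expectations; third, closability of the Malliavin derivative operator (see \cite{Nu06}) promotes the $L^2(\Omega)$-convergence of $Z_t^{\eps}$ to the identification $\mathbf{D}_s^j Z_t^z = \bj_{s,t}V_j(Z_s^z)$ as elements of $\mathbb{D}^{1,\eta}(\ch)$ for every $\eta\geq 1$. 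The promotion to $Z_t^z\in \mathbb{D}^{\infty}(\ch)$ follows by iterating the argument: differentiating the equation \eqref{eq:sde} an additional $k$ times generates a triangular system of linear RDEs driven by $X$ for the higher Malliavin derivatives, and item (i) applied to each such system delivers the uniform moment bounds needed for the induction, after which closability upgrades the classical derivatives at the level $\eps$ to Malliavin derivatives of $Z_t^z$.

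The main obstacle is really (i): the double-exponential growth coming from Gronwall-type bounds for linear RDEs is incompatible with polynomial moments of $\|\bj\|_{p\text{-var}}$, and the whole scheme rests on the Cass--Litterer--Lyons greedy-partition trick, which in turn depends crucially on the 2D $\rho$-variation structure of $R$ encoded in Hypothesis \ref{hyp:mixed-var-R}. In (ii), the secondary technical point is that one must identify the limit of $\int_0^{\cdot}\bj^{\eps}_{s,t}V_j(Z^{\eps,z}_s)\,dh_s^j$ in the Cameron--Martin space $\bch$, not merely pathwise; this is precisely the role of the Young complementarity $1/p+1/q>1$ established earlier in the preliminaries.
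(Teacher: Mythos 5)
The paper itself does not prove this proposition: it is quoted from the references \cite{CF,CLL,NS}, and your outline follows the same standard route as those references, namely Cass--Litterer--Lyons integrability for the Jacobian in (i), and piecewise-linear approximation, closability of $\mathbf{D}$, and the Cameron--Martin/Young complementarity for (ii). Your sketch of (ii) is consistent with that literature (with the small caveat that the convergence $\bj^{\ep}\to\bj$ in every $L^{\eta}$ needs the moment bounds of (i) to hold \emph{uniformly} in $\ep$ for the approximating Jacobians, which again comes from the same greedy-partition estimate applied uniformly).

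There is, however, a step in your argument for (i) that would fail if executed literally. You propose to control $\|\bX\|_{p-\mathrm{var};[0,T]}$ through the greedy partition, i.e.\ $\|\bX\|_{p-\mathrm{var};[0,T]}\le (N_{\alpha}+1)\,\alpha^{1/p}$, and then to substitute this into the deterministic bound $\|\bj\|_{p-\mathrm{var}}\le C\exp\bigl(C\|\bX\|_{p-\mathrm{var}}^{p}\bigr)$. That substitution yields $\exp\bigl(C\alpha (N_{\alpha}+1)^{p}\bigr)$ with $p>2\rho\ge 2$, whereas $N_{\alpha}$ only has Weibull tails $\PP(N_{\alpha}>n)\lesssim\exp(-c\,n^{2/q})$ with $2/q=1+1/\rho\le 2<p$ (see \eqref{eq:concentration-N}; the tails are Gaussian only when $\rho=1$), so the resulting random variable has infinite expectation and no moment bound follows. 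The actual Cass--Litterer--Lyons mechanism is different: one applies the deterministic linear-RDE estimate \emph{locally} on each greedy interval, where the accumulated rough-path control is at most $\alpha$, and multiplies the resulting factors over the at most $N_{\alpha}+1$ intervals, exploiting the multiplicative structure of the linear equation \eqref{eq:jacobian}. This gives a bound of the form $\|\bj\|_{p-\mathrm{var};[0,T]}\le C\,\|\bX\|_{p-\mathrm{var};[0,T]}\exp(C N_{\alpha})$ --- exponential in $N_{\alpha}$, not in $N_{\alpha}^{p}$ --- which is precisely the estimate the paper later records in \eqref{eq:upp-bnd-J-with-B-p-var}. All moments are then finite because $2/q=1+1/\rho>1$, i.e.\ exactly because $\rho<2$ in Hypothesis \ref{hyp:mixed-var-R}; it is this interval-by-interval use of linearity, not a substitution of a tail bound into the global double-exponential estimate, that defeats the double exponential.
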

{
\begin{proof}
The integrability of the Jacobian $\bj$ stated in (i) is the main content of \cite[Theorem 6.5]{CLL}.  The fact that $Z_t^{z,i} \in
\mathbb{D}^{\infty}(\tch)$  is proved in \cite[Theorem 1.2]{Inahama}. Finally, we show that relation \eqref{eq:rep-malliavin-with-jacobian} holds. First note that by Theorem \ref {thm:refined_CM_embedding} and \cite[Proposition 1]{CF} we have,
$$\langle\bd Z^z_t, h\rangle_{\tch}=\bd_{h}Z^z_t=\bj_t\int_0^t\bj_s^{-1}V(Z^z_s)d(\mathcal{R}h)_s,\quad h\in\tch.$$
This together with Remark \ref{rmk:tch-identities} (2) implies that the Malliavin derivative $\bd Z^z_t$ can be realized as a function and 
$$\bd_s Z_t^z=\bj_t\,\bj_s^{-1}V(Z^z_s).$$
The proof is thus completed.

\end{proof}
}
\smallskip



\section{Upper bounds for the density}
\label{sec:upper-bounds}

The aim of this section is to study upper bounds for the density of the solution to equation~(\ref{eq:sde}). Throughout this section $X$ is a continuous, centered Gaussian process starting at zero with i.i.d.\ components. In addition, we assume the following uniform ellipticity condition on the vector fields. 
\begin{hypothesis} \label{hyp:elliptic}
The vector fields $V_1,\ldots ,V_d$ of equation \eqref{eq:sde} are $C^\infty$-bounded and form a uniformly elliptic system, that is, for some $\lambda>0$,
\begin{equation}\label{eq:hyp-elliptic}
 v^{*} V(x) V^{*}(x) v  \geq \lambda \vert v \vert^2, \qquad \text{for all } v,x \in \R^n,
\end{equation}
where we have set $V=(V_j^i)_{i=1,\ldots ,n; j=1,\ldots d}$.
\end{hypothesis}

We further introduce

\begin{definition}
Let $X$ be a centered $\R^{d}$-valued Gaussian process with covariance $R$. We assume that $X$ satisfies Hypothesis \ref{hyp:mixed-var-R}. Let $\si_{t}$ and $\ka_{t}$ be as in \eqref{eq:def-variance-Xt}, \eqref{eq:def-kappa}. We define the self-similarity parameter $\eta_{t}$ for $t\in(0,T]$ by
\begin{equation}\label{eq:def-eta}
\eta_{t} := \frac{V_{1,\rho}(R;[0,t]^2)}{R(t,t)}=\lp\frac{\ka_{t}}{\si_{t}}\rp^{2}.
\end{equation}
\end{definition}

\begin{remark}\label{rmk:self-simil-coef}
The name \emph{self-similarity parameter} for $\eta_{t}$ stems from the fact that $\eta_{t}$ {does not depend on $t$} whenever the Gaussian process $X$ is self-similar. Hence, $\eta_t$ can be interpreted as quantifying the lack of self-similarity.
\end{remark}

With these definitions at hand, we shall prove an upper bound for the density of $X_t$, under the ellipticity assumption \eqref{eq:hyp-elliptic}.

\begin{theorem}\label{thm:upper-bnd-density}
Let $X$ be an $\R^{d}$-valued continuous, centered Gaussian process starting at zero with i.i.d.\ components and covariance  function $R$. Suppose that Hypotheses \ref{hyp:mixed-var-R}, \ref{hyp:correlation-increments-X}, \ref{assumption1} and \ref{hyp:elliptic} are satisfied and let  $\si_{t}, \ka_{t}, \eta_{t}$ be as in  \eqref{eq:def-variance-Xt}, \eqref{eq:def-kappa}, \eqref{eq:def-eta}.
 Let $Z^z$ be the solution to \eqref{eq:sde} driven by the Gaussian rough path lift $\bX$ of $X$. 
Then for all $t\in(0,T]$, the density $p_t$ of $Z_t^z$ satisfies
\begin{equation}\label{eq:exp-bound-irregular}
p_t(y) \leq \frac{c_{1} \eta_{t}^{n(n+2)}}{\ka_{t}^{n}} 
\exp \left(-\frac{\vert y-z \vert^{1+\frac{1}{\rho}}}{c_{2} \, \ka_{t}^{2} }  \right), \; \;  \text{for all } y \in \R^n,
\end{equation}
for some $c_1,c_2>0$.
\end{theorem}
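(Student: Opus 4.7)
The plan is to follow a classical Malliavin-calculus density-bound recipe: represent $p_t(y)$ through an integration-by-parts (IBP) formula, then bound the two resulting factors separately, using Gaussian concentration for a tail probability and the interpolation inequality of Proposition \ref{th: interpolation} for the negative moments of the Malliavin matrix. Concretely, I would first invoke the iterated Malliavin IBP formula (see \cite{Nu06}) to write
\begin{equation*}
p_t(y) = (-1)^n \, \EE\!\lc \mathbf{1}_{\{Z_t^z > y\}} \, H_{(1,\ldots,n)}(Z_t^z, 1)\rc,
\end{equation*}
where $H_{(1,\ldots,n)}$ is an iterated Skorokhod integral built from $\gamma_{Z_t^z}^{-1}$ and $\mathbf{D} Z_t^z$. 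H\"older's inequality with an exponent arbitrarily close to $1$ then yields
\begin{equation*}
p_t(y) \le \norm{H_{(1,\ldots,n)}}_{L^q} \cdot \mp\lp |Z_t^z - z| > |y-z|\rp^{1/q'}.
\end{equation*}

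For the tail factor I would combine a deterministic rough-path continuity estimate for \eqref{eq:sde}, which since $V_{0},\ldots,V_{d}\in \cac_{b}^{\infty}$ gives $|Z_{t}^{z}-z|\le F(\norm{\bX}_{p\text{-var};[0,t]})$ with $F$ polynomial of degree $\lfloor p\rfloor$, with Borell's Gaussian concentration inequality applied to the rough-path lift $\bX$. Since $V_{1,\rho}(R;[0,t]^{2})=\ka_{t}^{2}$ controls the Cameron--Martin norm on $[0,t]$, and since sharp tails for the higher Gaussian chaos components are available (cf. \cite{CLL,FV-bk}), this produces
\begin{equation*}
\mp\lp |Z_{t}^{z}-z|>r\rp \le c\,\exp\!\lp -\frac{r^{1+1/\rho}}{c\,\ka_{t}^{2}}\rp .
\end{equation*}
The exponent $1+1/\rho$ arises by balancing the Gaussian tail of the first-chaos component $\delta X$ against the heavier-tailed higher-chaos terms required to reconstruct $Z$ through its rough-path expansion.

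For the weight factor, expanding the iterated Skorokhod integral reduces the matter to positive moments of $\bj$, $\bj^{-1}$ and higher Malliavin derivatives $\mathbf{D}^{k}Z_{t}^{z}$ (all controlled by Proposition \ref{prop:deriv-sde} and its higher-order analogue), together with negative moments of $\det \gamma_{Z_{t}^{z}}$. For the latter, I would start from
\begin{equation*}
v^{\top}\gamma_{Z_{t}^{z}}v=\sum_{i=1}^{d}\norm{\lla v,\bj_{\cdot,t}V_{i}(Z_{\cdot}^{z})\rra\, \mathbf{1}_{[0,t]}}_{\ch}^{2},
\end{equation*}
and apply \eqref{interpolation3} to $f_{i}(s)=\lla v,\bj_{s,t}V_{i}(Z_{s}^{z})\rra$. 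Evaluating at $s=t$, Hypothesis \ref{hyp:elliptic} delivers $\sum_{i}|f_{i}(t)|^{2}\ge \la |v|^{2}$, hence $\norm{f_{i_{0}}}_{\infty;[0,t]}^{2}\ge \la/d$ for some index $i_{0}$; plugging this into \eqref{interpolation3} yields a quantitative lower bound $v^{\top}\gamma_{Z_{t}^{z}}v\gtrsim \si_{t}^{2}|v|^{2}$ modulo a correction involving the H\"older norm of $\bj$. The main obstacle is the careful book-keeping of this correction across the two regimes of the minimum in \eqref{interpolation3}: one must verify that the \emph{bad} regime (minimum attained by the second argument) contributes only a polynomial cost in $\eta_{t}=\ka_{t}^{2}/\si_{t}^{2}$, and thereby extract negative moments of $\det\gamma_{Z_{t}^{z}}$ that, when combined with the Jacobian moments, produce precisely the prefactor $\eta_{t}^{n(n+2)}/\ka_{t}^{n}$ appearing in \eqref{eq:exp-bound-irregular}.
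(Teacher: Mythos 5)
Your proposal follows essentially the same route as the paper: the Nualart integration-by-parts density representation split into a tail factor, Malliavin-derivative moments, and negative moments of the Malliavin matrix, with the tail handled by Cass--Litterer--Lyons type Gaussian concentration (yielding the exponent $1+1/\rho$ through the Cameron--Martin embedding exponent $q$, i.e.\ $2/q=1+1/\rho$, rather than by a chaos-balancing argument) and the matrix handled by the interpolation inequality \eqref{interpolation3} together with ellipticity and $\bj_{t,t}=\id$. This matches the paper's proof in structure and in all key estimates, so no substantive comparison is needed.
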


The reminder of this section is devoted to prove Theorem \ref{thm:upper-bnd-density}. Our global strategy is highlighted in Section \ref{sec:global-strategy}, while the main estimates are derived in Sections \ref{sec:tail-estim}, \ref{sec:estim-malliavin} and \ref{sec:estim-malliavin-matrix}.

\subsection{Global strategy}\label{sec:global-strategy}

Our starting point in order to get the upper bound \eqref{eq:exp-bound-irregular} is the following integration by parts type formula. Denote by $C^\infty_p(\mr^n)$ the space of smooth functions $f$ such that $f$ and all of its partial derivatives have at most polynomial growth.

\begin{proposition} \cite[Proposition 2.1.4]{Nu06} \label{intbyparts}
Let $F=(F^1,\ldots ,F^n)$ be a non-degenerate random vector as in Definition \ref{non-deg}. 
Let $G \in \md^{\infty}$ and $\varphi$ be a function in the space $C_p^\infty(\R^n)$. Then for any multi-index $\alpha \in \{1,2,\ldots ,n\}^k$, $k \geq 1$, there exists
an element $H_{\alpha}(F,G) \in \md^{\infty}$ such that
\begin{equation*}
\me[\partial_{\alpha} \varphi(F) G] =\me[\varphi(F) H_{\alpha}(F,G)],
\end{equation*}
Moreover, the elements $H_{\alpha}(F,G)$ are recursively given by
\begin{equation}\label{eq:recursive-H-alpha}
H_{(i)}(F,G) =\sum_{j=1}^n \delta^\diamond\left( G (\gamma^{-1}_F)^{ij} \, \bd F^j \right) 
\quad\text{and}\quad
H_{\alpha}(F,G)= H_{\alpha_k}(F,H_{(\alpha_1,\ldots ,\alpha_{k-1})}(F,G)),
\end{equation}
and for $1 \leq p<q<\infty$ we have
\begin{equation} \label{Holder}
\Vert  H_{\alpha}(F,G) \Vert_p \leq c_{p,q} \Vert \gamma^{-1}_F \, \bd F\Vert^k_{k, 2^{k-1}r}\Vert G\Vert^k_{k, q},
\end{equation}
 where $\frac1p=\frac1q+\frac1r$.
\end{proposition}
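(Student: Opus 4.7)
The plan is to prove the identity by induction on the length $k = |\alpha|$ of the multi-index, with the base case resting on three ingredients: the Malliavin chain rule, the almost sure invertibility of $\gamma_F$ (with $(\det \gamma_F)^{-1} \in \cap_p L^p(\Omega)$, which is the non-degeneracy hypothesis), and the duality identity $\me[\langle \bd F, u\rangle_\ch] = \me[F \, \delta^\diamond(u)]$ whenever $u \in \dom(\delta^\diamond)$.

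For the base case $\alpha = (i)$, I would first apply the chain rule for $\bd$ to the composition $\varphi(F)$:
\begin{equation*}
\bd(\varphi(F)) = \sum_{j=1}^n \partial_j \varphi(F) \, \bd F^j \in \ch.
\end{equation*}
Pairing with $\bd F^\ell$ in $\ch$ yields $\langle \bd(\varphi(F)), \bd F^\ell\rangle_\ch = \sum_j \partial_j \varphi(F) \, (\gamma_F)^{j\ell}$, so that by invertibility of $\gamma_F$,
\begin{equation*}
\partial_i \varphi(F) = \sum_{\ell=1}^n (\gamma_F^{-1})^{i\ell} \langle \bd(\varphi(F)), \bd F^\ell\rangle_\ch.
\end{equation*}
Multiplying by $G$, taking expectation and using duality gives $\me[\partial_i\varphi(F) G] = \me[\varphi(F) H_{(i)}(F,G)]$ with $H_{(i)}(F,G) = \sum_\ell \delta^\diamond(G(\gamma_F^{-1})^{i\ell}\bd F^\ell)$. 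The technical point here is to check that the integrand $G(\gamma_F^{-1})^{i\ell}\bd F^\ell$ sits in $\dom(\delta^\diamond)$; this follows from $G, \bd F^\ell \in \md^\infty$ together with the fact that $(\gamma_F^{-1})^{i\ell} \in \md^\infty$, which in turn is obtained by differentiating the identity $\gamma_F\, \gamma_F^{-1} = \id_n$ and invoking $(\det \gamma_F)^{-1} \in \cap_p L^p(\Omega)$ iteratively.

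For the inductive step, assume the statement for multi-indices of length $k-1$. Writing $\alpha = (\alpha_1,\ldots,\alpha_{k-1},\alpha_k)$ and applying the induction hypothesis with $G$ replaced by $G$ and $\varphi$ replaced by $\partial_{\alpha_k}\varphi$, one reduces to
\begin{equation*}
\me[\partial_{\alpha_k}\partial_{(\alpha_1,\ldots,\alpha_{k-1})}\varphi(F)\, G] = \me[\partial_{\alpha_k}\varphi(F) \, H_{(\alpha_1,\ldots,\alpha_{k-1})}(F,G)].
\end{equation*}
Another application of the base case, with $G$ replaced by $H_{(\alpha_1,\ldots,\alpha_{k-1})}(F,G) \in \md^\infty$, produces the recursion $H_\alpha(F,G) = H_{\alpha_k}(F, H_{(\alpha_1,\ldots,\alpha_{k-1})}(F,G))$. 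Membership of the intermediate $H$'s in $\md^\infty$ is propagated inductively from the fact that $\delta^\diamond$ sends $\md^{\infty}(\ch)$ into $\md^{\infty}$.

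For the $L^p$ estimate I would invoke Meyer's inequality, which states that $\delta^\diamond : \md^{1,p}(\ch) \to L^p(\Omega)$ is continuous. At $k=1$ this gives $\|H_{(i)}(F,G)\|_p \le C_p \|G\, \gamma_F^{-1}\bd F\|_{1,p}$, and applying the Leibniz rule for $\bd$ together with H\"older's inequality with $\frac{1}{p}=\frac{1}{q}+\frac{1}{r}$ splits this into $\|G\|_{1,q}\, \|\gamma_F^{-1}\bd F\|_{1,r}$. Iterating this scheme $k$ times yields the claimed estimate, and the doubling $2^{k-1}r$ of the Sobolev integrability index on the $\gamma_F^{-1}\bd F$-factor arises because at each iteration one must control a new layer of derivatives using H\"older. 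The main obstacle I expect is precisely this bookkeeping: commuting $\bd$ through the product of $G$, $(\gamma_F^{-1})^{ij}$ and $\bd F^j$ inside $\delta^\diamond$, keeping track of which Sobolev regularity is required for each factor after $k$ steps so that Meyer's inequality applies at every stage, and verifying that the successive H\"older splittings combine to give the stated norm $\|\gamma_F^{-1}\bd F\|_{k,2^{k-1}r}^k\|G\|_{k,q}^k$.
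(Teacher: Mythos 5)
The paper gives no proof of this proposition; it is quoted directly from Nualart's book \cite[Proposition 2.1.4]{Nu06}, and your argument --- chain rule for $\bd$, inversion of $\gamma_F$, duality between $\bd$ and $\delta^{\diamond}$, induction on the length of $\alpha$, and Meyer's inequality combined with H\"older for the $L^p$ bound --- is exactly the standard proof given in that reference. It is correct as written; the one genuinely delicate point, which you rightly single out, is that $(\gamma^{-1}_F)^{ij}\in\md^{\infty}$, and this is where the non-degeneracy hypothesis $(\det\gamma_F)^{-1}\in\cap_{p\ge1}L^p(\Omega)$ is actually used.
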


As a consequence, one has the following expression for the density of a non-degenerate random vector.

\begin{proposition} \label{density} \cite[Proposition 2.1.5]{Nu06}
Let $F=(F^1,\ldots ,F^n)$ be a non-degenerate random vector as in Definition \ref{non-deg}. Then the density $p_F(y)$ of $F$ belongs to the Schwartz space, and
for any $\sigma \subset \{1,\ldots ,n\}$,
\begin{equation*}
p_F(y)=(-1)^{n-\vert \sigma \vert}\me[{\bf 1}_{\{ F^i>y^i, i \in \sigma, F^i <y^i, i \notin \sigma\}} H_{(1,\ldots ,n)}(F,1)], \; \;  \text{for all } y \in \R^n. 
\end{equation*}
\end{proposition}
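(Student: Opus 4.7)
The plan is to derive the formula by testing the density against a generic smooth, compactly supported $\varphi$ and invoking Proposition~\ref{intbyparts} iterated to order $n$. That $p_F$ exists and is smooth is the classical Malliavin criterion recalled just below Definition~\ref{non-deg}; I treat this as a starting point and focus on the explicit representation.

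For a fixed $\sigma \subset \{1,\ldots,n\}$, I would first construct an $n$-th order antiderivative of $\varphi$ adapted to $\sigma$. Namely, set
\begin{equation*}
\Phi_\sigma(x) := (-1)^{n-|\sigma|} \int_{\R^n} \varphi(y) \prod_{i \in \sigma} \mathbf{1}_{\{y^i < x^i\}} \prod_{i \notin \sigma} \mathbf{1}_{\{y^i > x^i\}} \, dy.
\end{equation*}
Differentiating in $x^i$ pulls out a signed Dirac mass from each indicator: $+\delta$ for $i\in\sigma$ and $-\delta$ for $i\notin\sigma$. Bookkeeping of signs and the prefactor yields $\partial_1\cdots\partial_n \Phi_\sigma = \varphi$.

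Next I would apply Proposition~\ref{intbyparts} with multi-index $\alpha=(1,\ldots,n)$ and $G=1$ to write
\begin{equation*}
\me[\varphi(F)] = \me\big[(\partial_1\cdots\partial_n\Phi_\sigma)(F)\big] = \me\big[\Phi_\sigma(F) \, H_{(1,\ldots,n)}(F,1)\big].
\end{equation*}
The non-degeneracy of $F$, via the bound \eqref{Holder}, guarantees that $H_{(1,\ldots,n)}(F,1)\in L^p(\Omega)$ for every $p\ge 1$; combined with the boundedness of the indicator this justifies Fubini's theorem. Substituting the definition of $\Phi_\sigma$ and swapping expectation and $dy$-integration yields
\begin{equation*}
\int \varphi(y)\,p_F(y)\,dy = (-1)^{n-|\sigma|} \int \varphi(y)\, \me\!\left[\mathbf{1}_{\{F^i > y^i,\, i\in\sigma;\, F^i < y^i,\, i\notin\sigma\}} H_{(1,\ldots,n)}(F,1)\right] dy.
\end{equation*}
Arbitrariness of $\varphi$, combined with continuity of both sides in $y$, gives the stated pointwise identity.

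To conclude that $p_F \in \cs(\R^n)$, I would run the same argument with an arbitrary multi-index $\alpha$ replacing $(1,\ldots,n)$, obtaining a representation of every derivative $\partial^\alpha p_F$ of the same indicator-weight form. Polynomial decay in $y$ is then extracted from the trivial bound $|y^i|^N \mathbf{1}_{\{\pm F^i > \pm y^i\}} \le |F^i|^N$ together with $F\in \md^\infty$ (hence all moments finite) and the $L^p$-estimate \eqref{Holder}. The only genuine analytic issue is the verification of Fubini's theorem and of the sign combinatorics; the main structural input, the iterated integration-by-parts identity of Proposition~\ref{intbyparts}, is taken as given.
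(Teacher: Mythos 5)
The paper gives no proof of this proposition, citing it directly from Nualart \cite[Proposition 2.1.5]{Nu06}, and your argument is precisely the standard one from that reference: build the $\sigma$-adapted $n$-fold primitive $\Phi_\sigma$ with $\partial_1\cdots\partial_n\Phi_\sigma=\varphi$, apply the iterated integration by parts of Proposition \ref{intbyparts}, and use Fubini; the sign bookkeeping and the Fubini justification are both correct. Your proposal is therefore correct and takes the same route as the source; the only point worth making explicit in the Schwartz-space step is that the bound $|y^i|^N\mathbf{1}_{\{\pm F^i>\pm y^i\}}\le|F^i|^N$ requires choosing $\sigma$ according to the orthant containing $y$ (i.e.\ $i\in\sigma$ exactly when $y^i>0$), which is the reason the statement is formulated for arbitrary $\sigma$.
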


According to the above relation applied to $F=Z_t^z$ and $\sigma=\{i \in \{1,\ldots,n\}: y^i \geq z^i\}$, and applying inequality (\ref{Holder}) with $k=n, p=2, r=q=4$, we obtain
the following general upper bound for the density $p_t$ of $Z_t^z$
\begin{equation} \label{bound}
p_t(y) \leq c  \, \mp(\vert Z^z_t-z \vert \geq \vert y-z \vert )^{1/2} \, \Vert \gamma^{-1}_t \Vert^n_{n, 2^{n+2}}  \, \Vert \bd Z_t^z\Vert^n_{n, 2^{n+2}}, \; \;  \text{for all } y \in \R^n,
\end{equation}
where $\gamma_t$ denotes the Malliavin matrix of $Z_t^z$.
In the remainder of the section, we shall bound separately the three terms in the right hand side of \eqref{bound}.

\subsection{Tail estimates} \label{sec:tail-estim}
This section is devoted to  estimating $\mp(\vert Z^z_t-z \vert \geq \vert y-z \vert )$  on the right hand side of \eqref{bound}. Our main result in this direction is the following proposition.

\begin{proposition}\label{prop:exp-moments-rdes}
Let $X$ be an $\R^{d}$-valued continuous, centered Gaussian process with i.i.d.\ components satisfying Hypothesis \ref{hyp:mixed-var-R} for some $\rho\in[1,2)$. Let $\tau\in(0,T]$, $\ka_{\tau}$ be as in \eqref{eq:def-kappa} and $Z^{z}$, $V$ be as in Theorem \ref{thm:upper-bnd-density}.  Then there exists a constant $c_{2}>0$ such that
\begin{equation}\label{eq:concentration-X}
\PP\lp \sup_{t \le \tau} |Z^z_t-z| \ge y \rp \le  
\exp \left(-\frac{\vert y-z \vert^{1+\frac{1}{\rho}}}{c_{2} \, \ka_{\tau}^{2} }  \right),
\end{equation}
for all $y\in\R^{n}$.
\end{proposition}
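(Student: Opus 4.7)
The approach combines a deterministic rough-path bound with a Gaussian concentration estimate applied to a suitable functional of $\bX$. Throughout, fix $p = 2\rho + \ep$ as in the preceding corollary and set $q = (\tfrac{1}{2\rho}+\tfrac{1}{2})^{-1} = \tfrac{2\rho}{1+\rho}$, the Young-complementary Cameron-Martin exponent from Theorem \ref{thm:refined_CM_embedding}.

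\emph{Deterministic step.} Pick a threshold $\alpha > 0$ and introduce the greedy partition $0 = t_0 < t_1 < \cdots < t_N = \tau$ associated with $\bX$ by
\[
t_{i+1} = \inf\{t > t_i : \|\bX\|_{p-\mathrm{var};[t_i,t]} \ge \alpha\} \wedge \tau,
\]
letting $N_\alpha = N_\alpha(\bX;[0,\tau])$ denote the corresponding count. Since $\|\bX\|_{p-\mathrm{var};[t_i,t_{i+1}]} \le \alpha$ on each subinterval, Davie's lemma (equivalently Lyons' universal limit theorem) applied to \eqref{eq:sde} with $\cac_b^\infty$ vector fields yields $|Z^z_{t_{i+1}} - Z^z_{t_i}| \le C(V,\alpha)$, and summing over $i$ delivers the key deterministic control
\[
\sup_{t \le \tau} |Z^z_t - z| \le C(V,\alpha) \, N_\alpha(\bX;[0,\tau]).
\]

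\emph{Probabilistic step.} I would next establish the Weibull-type tail
\[
\PP\bigl(N_\alpha(\bX;[0,\tau]) > n\bigr) \le \exp\bigl(-c\, n^{1+1/\rho}/\kappa_\tau^2\bigr),
\]
which combined with the deterministic bound proves \eqref{eq:concentration-X} upon setting $y = C(V,\alpha)\,n$. The tail on $N_\alpha$ follows the strategy of Cass-Litterer-Lyons: the Gaussian functional $\|X\|_{p-\mathrm{var};[0,\tau]}$ is $\kappa_\tau$-Lipschitz on Cameron-Martin translates, because by Theorem \ref{thm:refined_CM_embedding} every $h \in \bch$ satisfies
\[
\|h\|_{p-\mathrm{var};[0,\tau]} \le \|h\|_{q-\mathrm{var};[0,\tau]} \le \kappa_\tau \|h\|_{\bch}
\]
(using $p > q$). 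Borell-TIS then produces Gaussian tails of scale $\kappa_\tau$ for $\|X\|_{p-\mathrm{var};[0,\tau]}$; iterating this estimate across the greedy intervals and carefully bookkeeping the Young-complementary pairing $(p,q)$ converts the Gaussian exponent $2$ into the Weibull exponent $2/q = 1 + 1/\rho$ for $N_\alpha$, with variance scale $\kappa_\tau^2$.

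The main obstacle will be the accurate propagation of the variance scale $\kappa_\tau$ through the greedy-partition argument. The exponent $1+1/\rho$ must emerge exactly from the pairing between $p$-variation of $\bX$ and $q$-variation of the Cameron-Martin space, while simultaneously $\kappa_\tau^2$ must replace all generic constants so that the final bound is uniform in $\tau$. Here it is essential to exploit the superadditivity of $V_{1,\rho}(R;\cdot)$ on disjoint rectangles, so that the local Gaussian concentration on each greedy subinterval $[t_i,t_{i+1}]$ aggregates into a single tail estimate governed by $\kappa_\tau$. This bookkeeping, rather than the Borell-TIS input or the deterministic RDE estimate per se, is the delicate technical heart of the argument.
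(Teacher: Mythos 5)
Your proposal follows essentially the same route as the paper: a greedy partition of $[0,\tau]$ at $p$-variation threshold $\alpha$, a per-interval rough-path estimate giving $|\delta Z^z_{t_it_{i+1}}|\le c_V\alpha$ (the paper uses \cite[Lemma 10.7]{FV-bk} where you invoke Davie/Lyons), and the Cass--Litterer--Lyons Weibull tail $\PP(N_{\alpha,\tau,p}+1>n)\lesssim \exp(-c\,n^{2/q}/\ka_\tau^2)$ with $2/q=1+1/\rho$. The only difference is that the paper simply cites \cite[Theorem 6.3]{CLL} for that tail estimate, whereas you propose to re-derive it via Borell's inequality and the Cameron--Martin embedding — which is exactly the argument behind the cited result.
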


\begin{proof}
According to Proposition \ref{prop:Gaussian-rough-path}, which can be applied since the process $X$ fulfills Hypothesis \ref{hyp:mixed-var-R}, there is a rough path lift $\bX$ of $X$.
For $p>2\rho$, define the control $\om_{\bX,p}$ by
\begin{equation}\label{eq:def-control-B}
\om_{\bX,p}(s,t)=\|\bX\|_{p-{\rm var};[s,t]}^{p} = \sum_{n\le \lfloor p \rfloor} \| \bX^{\bn} \|_{\frac{p}{n}-{\rm var};[s,t]}^{1/n}.
\end{equation}
Then \cite[Lemma 10.7]{FV-bk} asserts that
\begin{equation}\label{eq:bnd-X-pvar}
\|Z^z\|_{p-{\rm var};[s,t]} \le  c_V \lp \lc \om_{\bX,p}(s,t)\rc^{1/p} \vee \om_{\bX,p}(s,t) \rp.
\end{equation}
In particular, for any $t_{i}<t_{i+1}$ we have
\begin{equation}\label{eq:ineq-davies-increments}
|\der Z^z_{t_{i}t_{i+1}}| \le 
c_V \lp \lc \om_{\bX,p}(t_{i},t_{i+1})\rc^{1/p} \vee \om_{\bX,p}(t_{i},t_{i+1}) \rp .
\end{equation}
Consider now $\alpha\ge 1$ and construct a partition of $\ot$ inductively in the following way: we set $t_0=0$ and
\begin{equation}\label{eq:def-tau-i}
t_{i+1}:= \inf\lcl  u >t_{i} ; \, \|\bX\|^p_{p-{\rm var};[t_{i},u]} \ge \alpha \rcl.
\end{equation}
We then set $N_{\alpha,t,p}=\sup\{n\ge 0;\, t_n < t\}$. Observe that, since we have taken $\alpha\ge 1$, inequality~\eqref{eq:ineq-davies-increments} can be read as $|\der Z_{t_{i}t_{i+1}}| \le  c_V \,\om_{\bX,p}(t_{i},t_{i+1}) = c_V \,\alpha$. Hence
\begin{equation}\label{eq:telescopic-increments-X}
|Z^z_t-z| \le | Z_t^z-Z_{t_{N_{\alpha,t,p}} } |+ \sum_{i=0}^{N_{\alpha,t,p}-1} |\der Z_{t_{i}t_{i+1}}| \le c_V \,\alpha \, (N_{\alpha,t,p}+1).
\end{equation}
By \cite[Theorem 6.3]{CLL} we have
\begin{equation}\label{eq:concentration-N}
\PP \lp N_{\alpha,t,p} +1> n \rp
\lesssim \exp\lp -\frac{c_{p,q,\al} \, n^{\frac{2}{q}} }{\ka_{t}^{2}}\rp,
\end{equation}
where $\ka_{t}$ is as in \eqref{eq:def-kappa} and $q$ is the exponent given in Theorem \ref{thm:refined_CM_embedding} by $\frac{1}{q}=\frac{1}{2\rho}+\frac{1}{2}$. This easily implies
\begin{equation}\label{eq:concentration-X-2}
\PP\lp \sup_{t \le \tau} |Z^z_t-z| \ge \xi \rp
\le \PP \lp c_V \,\alpha \, (N_{\alpha,\tau,p} +1)> \xi \rp
\lesssim \exp\lp -\frac{c_{p,q,\al,V}  \xi^{1+\frac{1}{\rho}}}{\ka_{\tau}^{2}} \rp,
\end{equation}
and thus the claim.
\end{proof}

\subsection{Estimate for Malliavin derivatives} \label{sec:estim-malliavin}
We now proceed to bound the Malliavin derivatives involved in the right hand side of \eqref{bound}. We summarize the results in the following proposition.

\begin{proposition}\label{est Malliavin derivative}
Under the same assumptions as in Proposition \ref{prop:exp-moments-rdes}, for all $m \in \mathbb{N}$ and $p>1$ there exists a positive constant $c_{m,p}$ such that
\begin{align} 
 \label{derivative}
 \Vert  Z_t^z\Vert_{m, p} &\leq c_{m,p} \, \ka_{t},
\end{align}
where $\ka_{t}=V_{1,\rho}(R; [0,t]^2)^{\frac{1}{2}}$ is as in \eqref{eq:def-kappa}.
\end{proposition}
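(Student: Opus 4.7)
The natural route is induction on $m$, combining the representation of Malliavin derivatives from Proposition \ref{prop:deriv-sde} with the interpolation inequality of Proposition \ref{interpolation-general}(i). The base case $m=1$ starts from the explicit formula $\mathbf{D}^{j}_{s} Z^{z}_{t} = \mathbf{J}_{s,t} V_{j}(Z^{z}_{s}) \, \mathbf{1}_{[0,t]}(s)$. By Proposition \ref{interpolation-general}(i) applied variable-wise,
\[
\|\mathbf{D} Z^{z}_{t}\|_{\ch}^{2}
\le c\, \ka_{t}^{2}\Bigl( \| s\mapsto \mathbf{J}_{s,t} V(Z^{z}_{s})\|_{p\text{-var};[0,t]}^{2} + \|\mathbf{J}_{s,t} V(Z^{z}_{s})\|_{\infty;[0,t]}^{2}\Bigr) .
\]
Since $V \in \cac_{b}^{\infty}$ is bounded, the sup-norm term is controlled by $\|\mathbf{J}\|_{\infty;[0,t]}\|V\|_{\infty}$, while the $p$-variation term is estimated by the product rule $\|\mathbf{J}_{\cdot,t} V(Z^{z})\|_{p\text{-var}} \lesssim \|\mathbf{J}\|_{p\text{-var}}\|V\|_{\infty} + \|\mathbf{J}\|_{\infty}\|V'\|_{\infty}\|Z^{z}\|_{p\text{-var}}$. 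Proposition \ref{prop:deriv-sde}(i) together with \eqref{eq:bnd-X-pvar} and the Gaussian tail estimate \eqref{eq:concentration-N} furnish $L^{\eta}$-moments of all orders for both $\|\mathbf{J}\|_{p\text{-var}}$ and $\|Z^{z}\|_{p\text{-var}}$. Taking $L^{p}$-norms and applying H\"older yields \eqref{derivative} for $m=1$.

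The inductive step treats $\mathbf{D}^{m} Z^{z}_{t}$ as a function on $[0,t]^{m}$ valued in tensors, and the tensor-product structure of $\ch^{\otimes m}$ allows us to bound $\|\mathbf{D}^{m}Z^{z}_{t}\|_{\ch^{\otimes m}}$ by applying the interpolation inequality of Proposition \ref{interpolation-general}(i) successively in each time variable $s_{1},\dots,s_{m}$. Each application produces a factor $\ka_{t}$ and replaces the $\ch$-norm in that variable by a combination of sup and $p$-variation norms. To carry out this iteration one needs $L^{\eta}$-moments of the mixed $(p,\dots,p)$-variation/sup norms of the iterated Malliavin derivatives of $Z^{z}$. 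These derivatives satisfy a family of linear rough differential equations obtained by differentiating \eqref{eq:sde} and \eqref{eq:jacobian} repeatedly in the Malliavin sense; each of them is of the same form as \eqref{eq:jacobian} but driven by the same rough path $\mathbf{X}$ with coefficients involving $\mathbf{J}$, $\mathbf{J}^{-1}$ and the derivatives of the $V_{i}$'s.

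Hence the core task reduces to establishing, for every $k\ge 0$, $\eta\ge 1$ and multi-index of time variables, a uniform bound $\mathbb{E}\bigl[\|\mathbf{D}^{k}\mathbf{J}\|_{p\text{-var in each variable}}^{\eta}\bigr]<\infty$, and similarly for $\mathbf{D}^{k}Z^{z}$. These follow by a standard induction on $k$, using Proposition \ref{prop:deriv-sde}(i) for the base case and the fact that each iterated linear rough equation preserves $L^{\eta}$-moments of $p$-variation norms because its coefficients lie in $\cac_{b}^{\infty}$ and the driving rough path has Gaussian tails through \eqref{eq:concentration-N}.

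The main obstacle is precisely this last point: rigorously tracking the joint $p$-variation of the iterated Malliavin derivatives in several time variables, and ensuring that a clean factor $\ka_{t}^{m}$ emerges after the $m$-fold application of the interpolation inequality, without picking up any stray dependence on $t$. Once the multi-variable moment bounds are in place, assembling them through the iterated interpolation inequality and H\"older's inequality delivers $\|\mathbf{D}^{m}Z^{z}_{t}\|_{L^{p}(\Omega;\ch^{\otimes m})} \le c_{m,p}\,\ka_{t}^{m}$, which together with the trivial bound $\|Z^{z}_{t}\|_{L^{p}}\le \|z\|+c\,\ka_{t}$ from \eqref{eq:concentration-X} yields \eqref{derivative}.
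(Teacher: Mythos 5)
Your argument for $m=1$ is sound, and in fact it mirrors what the paper itself does for the Malliavin \emph{matrix}: writing $\mathbf{D}^j_s Z_t^z=\mathbf{J}_{s,t}V_j(Z_s^z)$, viewing this path as an element of $\ch$ via the 2D Young pairing of Remark \ref{representation H norm}, and invoking Proposition \ref{interpolation-general}(i) together with the all-order moments of $\|\mathbf{J}\|_{p\text{-var}}$, $\|\mathbf{J}^{-1}\|_{p\text{-var}}$, $\|Z^z\|_{p\text{-var}}$ does give $\|\mathbf{D}Z_t^z\|_{L^p(\Omega;\ch)}\le c\,\ka_t$. The genuine gap is the case $m\ge 2$, which you acknowledge as ``the main obstacle'' but then dispose of by assertion. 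Proposition \ref{interpolation-general}(i) is a one-parameter statement; there is no tensorized version in the paper, and applying it ``successively in each time variable'' is not legitimate as stated: after the first application in $s_1$ you are left with an $\ch$-norm in $s_2,\dots,s_m$ of a sup/$p$-variation norm in $s_1$, and these norms do not commute or factor. Making this rigorous requires (i) a multi-parameter Young/interpolation estimate for $\ch^{\otimes m}$ expressed through mixed variations in $(s_1,\dots,s_m)$, and (ii) moment bounds for the corresponding mixed multi-parameter variation norms of $\mathbf{D}^kZ^z$ and $\mathbf{D}^k\mathbf{J}$. Neither is supplied by the one-parameter machinery you cite (Proposition \ref{prop:deriv-sde}(i), \eqref{eq:bnd-X-pvar}, \eqref{eq:concentration-N}), and ``standard induction'' does not produce them: controlling joint variation of two-parameter objects like $(s_1,s_2)\mapsto\mathbf{D}^2_{s_1s_2}Z_t^z$ is exactly the technical difficulty that the literature goes out of its way to avoid.

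The paper avoids it by Inahama's independent-copy device \cite{Inahama}: introducing an independent copy $\hat X$ of $X$, one bounds $\|\mathbf{D}^mZ_t^z\|_{\ch^{\otimes m}}\le C(\hat\me|\Xi_t^m|^2)^{1/2}$, where $\Xi_t^m$ is a one-parameter rough integral driven by the joint rough path $(X,\hat X,Z^z,\mathbf{J},\mathbf{J}^{-1},\Xi^1,\dots)$. This reduces all orders $m$ to the same one-parameter estimates: the Cass--Litterer--Lyons tail bound for $N_{\alpha,t,p}$ \cite{CLL} and the Friz--Victoir moment bound $\|\,\|\mathbf{X}\|_{p\text{-var};[0,t]}\|_{L^q}\le C_q\ka_t$ \cite{FV}; note that in the paper the factor $\ka_t$ enters through this last moment estimate, not through the Cameron--Martin interpolation inequality. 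So your route is genuinely different and works at first order, but to complete it at higher order you would either have to develop the missing multi-parameter interpolation and variation estimates, or switch to the independent-copy representation as the paper does.
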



\begin{proof}

We use a method by Inahama \cite{Inahama} to which we refer for more details. For simplicity, we assume $V_0=0$, and first show \eqref{derivative} for $m=1,2$. The case $V_0 \neq 0$ is treated similarly. Recall that $\mathbf{J}$ is the Jacobian process. 

\noindent
\emph{Step 1: Expression for the Malliavin derivatives.}
Let $\hat{X}=(\hat{X}_1,...,\hat{X}_d)$ be an independent copy of $X$ and consider the $2d$-dimensional Gaussian process $(X, \hat{X}).$ The expectation with respect to $X$ and $\hat{X}$ are respectively denoted by $\me$ and $\hat{\me}$.
Set
$$
\Xi_{t}^{1}:=\sum_{j=1}^{d} \mathbf{J}_t\int_0^t\mathbf{J}^{-1}_sV_{j}(Z_s^z)d\hat{X}_{s}^{j},$$
and
\begin{align*}
\Xi_{t}^{2}:=&
\sum_{j=1}^{d} \mathbf{J}_t\int_0^t\mathbf{J}^{-1}_s
\lcl 
D^2V_{j}(Z_s^z)\lp \Xi_{s}^{1}, \Xi_{s}^{1}\rp dX_s^{j} +2DV_{j}(Z_t^z) \Xi_{s}^{1}, d\hat{X}_s^{j}
\rcl.
\end{align*}
Then one can show that the following bounds hold true (for more details, see equations (2.8) and (2.9) in \cite{Inahama}, and the discussion after them),
$$\|\bd Z_t^z\|_{\tch\otimes\mr^n} \le C (\hat{\me}\vert\Xi_{t}^{1}\vert^2)^{1/2},$$
$$\|\bd^2 Z_t^z\|_{\tch\otimes\tch\otimes\mr^n}\le C (\hat{\me} \vert \Xi_{t}^{2} \vert^2)^{1/2}.$$

\noindent
\emph{Step 2: Bound for the first order derivative.}
We now estimate $\Xi^{1}$ by using general bounds taken from the theory of rough paths. Namely, let
\begin{align}\label{rough M}
M=(X, \hat{X}, Z^z, \mathbf{J}, \mathbf{J}^{-1}).
\end{align}
Then, $M$  {can be lifted} as a rough path {$\mathbf{M}$} obtained by solving an SDE driven by $(X, \hat{X})$. Hence, it is a $p$-rough path for any $p>2\rho$, where $\rho$ is the exponent appearing in Hypothesis~\ref{hyp:mixed-var-R}. Furthermore, the integral $\int \mathbf{J}^{-1}_sV(Z_s^z)d\hat{X}_s$ is a rough integral of the type $\int f(M) d\mathbf{M}$, where $f$ has polynomial growth. We deduce  that for some $r>0$, the following bound is verified
\begin{equation}\label{b0}
| \delta\Xi_{st}^{1} | \le C (1+\| \mathbf{M}\|_{p-var,[0,T]})^r \| \mathbf{M}\|_{p-var,[s,t]}.
\end{equation}
{We now estimate  $\|\mathbf{M}\|_{p-var,[s,t]}$ appearing in \eqref{b0}. Define
$$Y_t=\sum_{j=1}^d\int_0^t DV_j(Z^z_s)dX_s^j.$$
Then $\tilde{M}=(X,\hat{X}, Z^z, Y)$ can be lifted as a rough path $\tilde{\mathbf{M}}$ by solving an SDE (with $C^\infty$-bounded vector fields) driven by $(X, \hat{X})$. Note that the Jacobian satisfies equation \eqref{eq:jacobian} and  that $\mathbf{J}^{-1}$ satisfies a similar equation
$$
\bj_{t}^{-1} = \id_{n} - \int_0^t \bj_s^{-1} DV_0 (Z^z_s)  \, ds-
\sum_{j=1}^d \int_0^t \bj^{-1}_s DV_j (Z^{z}_s)  \, dX^j_s.
$$
Also recall that we assumed $V_0=0$ throughout our proof. It is then clear that the rough path $\mathbf{M}$ can be obtained by solving an SDE (with linear vector fields) driven by $\tilde{\mathbf{M}}$. Hence, we have the following growth-bound {(cf.\ \cite[inequality (4.10) and Remark 4.12]{CLL})},
\begin{equation}\label{eq:upp-bnd-J-with-B-p-var}
\|\mathbf{M}\|_{p-{\rm var}; [0,t]} \leq C \, \| \mathbf{\tilde{\mathbf{M}}}\|_{p-var,[0,t]}\exp\left({C N_{\alpha,t,p}(\tilde{\mathbf{M}})}\right),
\end{equation}
where $N_{\alpha, t,p}(\tilde{\mathbf{M}})$ is defined in \cite[equation (4.7)]{CLL} and has finite moment to any order by Corollary 3 of \cite{FR}.
Gathering \eqref{b0} and \eqref{eq:upp-bnd-J-with-B-p-var}, together with \cite[Lemma 4]{FR}, we  deduce that
\begin{equation}\label{b1}
| \Xi_{t}^{1} | \le C  \| \mathbf{\tilde{\mathbf{M}}}\|_{p-var,[0,t]}\exp\left({C N_{\alpha,t,p}(\tilde{\mathbf{M}})}\right).
\end{equation}

Furthermore, by standard rough path estimate for SDEs with $C^\infty$-bounded vector fields (cf. \cite[Theorem 10.36]{FV-bk}), we have
$$\|\tilde{\mathbf{M}}\|_{p-{\rm var}; [0,t]}\leq C_V (\|{\mathbf{X}}\|_{p-{\rm var}; [0,t]}+\|{\hat{\mathbf{X}}}\|_{p-{\rm var}; [0,t]})\vee (\|{\mathbf{X}}\|_{p-{\rm var}; [0,t]}+\|{\hat{\mathbf{X}}}\|_{p-{\rm var}; [0,t]})^p.$$
We now invoke \cite[Theorem 35-(i) and Corollary 66]{FV},   which asserts that
$$\big\|\| \mathbf{X}\|_{p-var,[0,t]}+\| \hat{\mathbf{X}}\|_{p-var,[0,t]}\big\|_{L^q}
\leq C_q \ka_{t} .
$$
First using H\"{o}lder's inequality in \eqref{b1} and then the estimate above completes the proof of ~\eqref{derivative} for $m=1$. 
}

\noindent
\emph{Step 3: Higher order derivatives.}
In the same way as in Step 2, we estimate $\Xi^{2}$ as a rough integral of the type $\int \phi (M_1) d\mathbf{M}_1$ where $\phi$ has polynomial growth and $M_1$ is the rough path
\[
M_1 =(X, \hat{X}, Z^z, \mathbf{J}, \mathbf{J}^{-1},\Xi^{1})
\]
Arguing as before and using all the previous estimates, we obtain a bound of the same type as~\eqref{b1}
\[
| \Xi_{t}^{2} | \le C  \| \mathbf{\tilde{\mathbf{M}}}\|_{p-var,[0,t]}\exp\left({C N_{\alpha,t,p}(\tilde{\mathbf{M}})}\right).
\]
This easily yields the claim \eqref{derivative} for the case $m=2$. 
Higher order Malliavin derivatives are treated similarly by constructing processes $\Xi^m, m>2$ inductively (see \cite{Inahama}). 
\end{proof}

\subsection{Estimates for the Malliavin matrix}\label{sec:estim-malliavin-matrix}   

We next provide an estimate for the inverse of the Malliavin matrix $\gamma_t$ in  \eqref{bound}. 

\begin{proposition}\label{est Malliavin Matrix}
Consider the solution $Z^{z}$ to \eqref{eq:sde} under the same conditions as in Theorem \ref{thm:upper-bnd-density}. For $t\in(0,T]$, let $\ga_{t}$ be its Malliavin matrix defined as in \eqref{malmat}.
Then, for all $m \in \mathbb{N}$ and $p>1$ there exists a constant $c_{m,p}$ such that 
\begin{align} \label{gamma}
\Vert \gamma^{-1}_t \Vert_{m, p} &\leq 
\frac{c_{m,p} \, \eta_{t}^m}{\si_{t}^{2}},
\end{align}
where $\si_{t}$, $\eta_{t}$ are as in relations \eqref{eq:def-variance-Xt} and \eqref{eq:def-eta}.
\end{proposition}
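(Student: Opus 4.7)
The strategy is to factor the Malliavin matrix through the Jacobian, reducing matters to a quadratic-form lower bound on a ``reduced'' matrix, then handle higher derivatives by iterating the identity $D\gamma_t^{-1} = -\gamma_t^{-1}(D\gamma_t)\gamma_t^{-1}$. Using the representation $\mathbf{D}^j_s Z_t^z = J_t J_s^{-1} V_j(Z_s^z)$ from Proposition~\ref{prop:deriv-sde}, I write $\gamma_t = J_t C_t J_t^T$ with
$$
C_t^{ij} := \sum_{k=1}^d \lla (J_\cdot^{-1} V_k(Z_\cdot^z))^i \1_{[0,t]},\ (J_\cdot^{-1} V_k(Z_\cdot^z))^j \1_{[0,t]} \rra_{\ch},
$$
so $\gamma_t^{-1} = (J_t^T)^{-1} C_t^{-1} J_t^{-1}$. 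Since $J_t^{\pm 1}$ have moments of all orders by Proposition~\ref{prop:deriv-sde}(i), H\"older's inequality reduces the statement to analogous $L^p$ bounds on $C_t^{-1}$ and its derivatives.

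For the base case $m=0$, I aim at $\|\|C_t^{-1}\|_{\text{op}}\|_p \le c_p/\sigma_t^2$. Fix a unit vector $v\in\R^n$ and set $f_k^v(s) := v^T J_s^{-1} V_k(Z_s^z)$, so that $v^T C_t v = \sum_k \|f_k^v \1_{[0,t]}\|_\ch^2$. Evaluating at $s=0$ and using ellipticity~\eqref{eq:hyp-elliptic} gives $\sum_k |f_k^v(0)|^2 = v^T V(z)V(z)^T v \ge \lambda$, so there is $k^* = k^*(v)$ with $\|f_{k^*}^v\|_{\infty;[0,t]} \ge \sqrt{\lambda/d}$. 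Applying the interpolation inequality~\eqref{interpolation3} to $f_{k^*}^v$ and inverting yields
$$
(v^T C_t v)^{-1} \le \frac{4d}{\lambda\,\sigma_t^2}\Bigl(1 + c_{c_X,\lambda,\gamma,\alpha} \|f_{k^*}^v\|_{\gamma;[0,t]}^{\alpha/\gamma}\Bigr),
$$
and rough-path moment estimates on $Z^z$ and $J^{\pm 1}$ (of the type used in the proof of Proposition~\ref{est Malliavin derivative}) guarantee that $\|f_{k^*}^v\|_{\gamma;[0,t]}$ has moments of all orders, uniformly in $v$. This gives the quadratic-form bound $\|(v^T C_t v)^{-1}\|_p \le c_p/\sigma_t^2$, which I promote to a bound on $\|\|C_t^{-1}\|_{\text{op}}\|_p$ by a standard covering/compactness argument on the unit sphere of $\R^n$ (available because $C_t$ is symmetric, positive semi-definite and of fixed finite dimension).

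For $m\ge 1$, I iterate $D\gamma_t^{-1} = -\gamma_t^{-1}(D\gamma_t)\gamma_t^{-1}$ to get
$$
D^m \gamma_t^{-1} = \sum_{\ell\ge 1,\, k_1+\cdots+k_\ell = m} c_{k_1,\ldots,k_\ell}\, \gamma_t^{-1}(D^{k_1}\gamma_t)\gamma_t^{-1}\cdots\gamma_t^{-1}(D^{k_\ell}\gamma_t)\gamma_t^{-1}.
$$
Since $\gamma_t^{ij} = \lla DZ_t^i, DZ_t^j\rra_\ch$, each factor $D^{k_i}\gamma_t$ unfolds into a sum of Cauchy--Schwarz products of Malliavin derivatives of $Z^z$ of order at most $k_i+1$, and Proposition~\ref{est Malliavin derivative} contributes a $\kappa_t^2$ factor per such block in $L^q$. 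Combining with the base case (each $\gamma_t^{-1}$ giving $\sigma_t^{-2}$) and applying H\"older in the expansion gives
$$
\|D^m \gamma_t^{-1}\|_{L^p(\Omega;\ch^{\otimes m})} \le c_{m,p} \sum_{\ell=1}^m \frac{\kappa_t^{2\ell}}{\sigma_t^{2(\ell+1)}} \le \frac{c_{m,p}\,\eta_t^m}{\sigma_t^2},
$$
where the last step uses $\eta_t \ge 1$, itself a consequence of the trivial partition estimate $\sigma_t^2 = R_{0t}^{0t} \le V_{1,\rho}(R;[0,t]^2) = \kappa_t^2$. The main obstacle is the base case: one must juggle the uniformity in $v$ of the choice of $k^*(v)$ when applying~\eqref{interpolation3}, and verify that the H\"older exponent $\gamma$ used in interpolation (subject to $\gamma + 1/\rho > 1$) is compatible with the actual H\"older regularity of $J_\cdot^{-1} V_k(Z_\cdot^z)$ inherited from the rough-path lift of $X$ via Hypothesis~\ref{hyp:mixed-var-R}. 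Once that is handled, the higher-order case is essentially bookkeeping based on Proposition~\ref{est Malliavin derivative} and the Leibniz expansion.
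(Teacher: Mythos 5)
Your proposal follows essentially the same route as the paper's proof: factor $\gamma_t=\mathbf{J}_t C_t \mathbf{J}_t^*$, lower-bound the quadratic form of $C_t$ by combining ellipticity at $s=0$ (where $\mathbf{J}_0=\id$) with the interpolation inequality \eqref{interpolation3} and moment bounds on the $\gamma$-H\"older norm of $\mathbf{J}^{-1}V(Z^z)$, then handle $m\ge 1$ by iterating $\bd\gamma_t^{-1}=-\gamma_t^{-1}(\bd\gamma_t)\gamma_t^{-1}$ together with Proposition \ref{est Malliavin derivative} and $\eta_t\ge 1$. The only real difference is cosmetic: you work per unit vector $v$ and then invoke a sphere-covering step, whereas your own estimate is already uniform in $v$ almost surely (since $\|f^v_{k^*}\|_{\gamma;[0,t]}\le \max_k\|\mathbf{J}^{-1}V_k(Z^z)\|_{\gamma;[0,t]}$ for $|v|=1$), which is exactly how the paper phrases it via a single random variable $M_t$; using that observation instead of the covering argument avoids any extra dependence on moments of $\|C_t\|$ and keeps the $m=0$ constant uniform in $t$.
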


\begin{proof}

Without loss of generality, we will prove (\ref{gamma})  for $0 < t \le 1$. We divide the proof into two steps.

\noindent
\emph{Step 1: case $m=0$.}
Let $C_{t}$ be the matrix defined by
\[
C_t= 
\int_0^t \int_0^t \mathbf{J}_{u}^{-1} V(Z_u^x)V(Z_v^x)^*(\mathbf{J}_{v}^{-1} )^* dR(u, v).
\]
By Remark \ref{representation H norm} and (\ref{eq:rep-malliavin-with-jacobian}), we have $\ga_{t}=\mathbf{J}_{t} C_{t} \mathbf{J}_{t}^{*}$. Therefore the upper bound on  $\|\ga_{t}^{-1}\|_{p}$ can be easily deduced from the following inequality 
\begin{equation}\label{eq:low-bnd-Sigma-t}
y^{*} C_t y \geq M_{t} \si_{t}^{2} \, |y|^{2}, 
\quad \text{for} \quad
y\in\R^{n},
\end{equation}
where $M_{t}$ is a random variable  admitting negative moments of any order (see, e.g.\ \cite[Lemma 2.3.1]{Nu06}).  To this aim, we first notice that
\begin{equation}\label{b2}
y^{*} C_t y = \| f \1_{[0,t]} \|_{\tch}^{2},
\quad \text{with} \quad
f_{u} :=  V(Z_{u}^z)^*(\mathbf{J}_{ u}^{-1} )^* y.
\end{equation}
Furthermore, thanks to the interpolation inequality \eqref{interpolation3}, we have 
\begin{equation}\label{eq:interpolation-H-L2}
\| f \1_{[0,t]} \|_{\tch}^{2}
\ge
\frac{\si_{t}^{2} \|f\|_{\infty;[0,t]}^{2}}{4}
\min\left\{1, \
\frac{c_{X} \, \|f\|_{\infty;[0,t]}^{\frac{\alpha}{\gamma}}}
{\si_{t}^{2} \, \|f\|_{\gamma;[0,t]}^{\frac{\alpha}{\gamma}}}
\right\}.
\end{equation}
Next observe that, due to the uniform ellipticity condition $| V(x) y |^2 \ge \lambda | y |^2$, it is readily checked that
\begin{equation}\label{eq:bnd-f}
|f_v|^2 \ge \la \, |\mathbf{J}_{ v}^{-1} y |^2\ge \la \, \|\mathbf{J}_{ v}\|^{-2} | y |^2.
\end{equation}
Moreover, we have $J_{0}=\id$, which implies that $\sup\{\|J_{v}\|^{-1}; v\in[0,t]\}\ge 1$. Relation \eqref{eq:bnd-f} thus yields 
\begin{equation}\label{b3}
\|f\|_{\infty;[0,t]} \ge \la |y|.
\end{equation}
Plugging \eqref{b3} into \eqref{eq:interpolation-H-L2}, we thus get
\begin{equation*}
\| f \1_{[0,t]} \|_{\tch}^{2}
\ge
\si_{t}^2 M_{t} |y|^{2},
\quad\text{with}\quad
M_{t} = 
\frac{\la^2}{4}
\min\left\{1, \
\frac{c_{X} \, (\la |y|)^{\frac{\alpha}{\gamma}}}
{\si_{t}^{2} \, \|f\|_{\gamma;[0,t]}^{\frac{\alpha}{\gamma}}}
\right\}.
\end{equation*}
According to \eqref{eq:low-bnd-Sigma-t} and \eqref{b2}, it is therefore left to prove $\EE[M_{t}^{-p}]<\infty$ for all $p\ge 1$, {uniformly in $t$ and $y$}. We trivially have
\begin{equation}\label{b4}
M_{t}^{-1} \le 
\frac{4}{\la^2}
\max\left\{1, \
\frac{\si_{t}^{2} \, \|f\|_{\gamma;[0,t]}^{\frac{\alpha}{\gamma}}}
{c_{X} \, (\la |y|)^{\frac{\alpha}{\gamma}}}
\right\} ,
\end{equation}
and by definition of $f$ in \eqref{b2}
\begin{equation*}
\|f\|_{\gamma;[0,t]} \le \|J^{-1} V(Z^{z})\|_{\gamma;[0,t]} \, |y|.
\end{equation*}
Substituting this value in \eqref{b4} yields
\begin{equation}\label{b41}
M_{t}^{-1} \le 
\frac{4}{\la^2}
\max\left\{1, \
\frac{\si_{t}^{2} \, \|J^{-1} V(Z^{z})\|_{\gamma;[0,t]}^{\frac{\alpha}{\gamma}}}
{c_{X} \, \la^{\frac{\alpha}{\gamma}}}
\right\}.
\end{equation}
It is thus readily checked that $M_{t}^{-1}$ admits moments of any order {uniformly in $t$ and $y$}, thanks to the fact that $\|J^{-1} V(Z^{z})\|_{\gamma;[0,t]}$ admits moments of any order. Indeed, similar arguments as used in \cite{CLL} to control the $p$-variation norm of $J^{-1}$ can be used to show that the $\gamma$-H\"{o}lder norm of $J^{-1}$ admits moments of any order. This concludes the proof for $m=0$, namely
\begin{equation}\label{b5}
\|\gamma_t^{-1}\|_p\leq c \, \si_{t}^{-2}.
\end{equation}

\noindent
\emph{Step 2: case $m\ge 1$.}
Now that we have established \eqref{b5}, the case of higher order derivatives follows from more standard considerations. Indeed, applying elementary rules for the derivative of the inverse to $\gamma_t^{-1}$, we get
\begin{align}\label{D gamma}
\bd(\gamma_t^{-1})^{ij}=-\sum_{k,l=1}^d(\gamma_t^{-1})^{ik}(\gamma_t^{-1})^{lj}\bd\gamma_t^{kl}.
\end{align}
Therefore, it is easily seen that, using the definition of $\gamma_t$,
\begin{equation*}
\|\bd(\gamma_t^{-1})^{ij}\|_\tch
\le
c_{d} \lp\|\bd Z_t\|_\tch+\|\bd^2Z_t\|_{\tch^{\otimes 2}}\rp^2 \|\gamma_t^{-1}\|^{2}.
\end{equation*}
Together with \eqref{derivative} and \eqref{b5} this implies
\begin{align*}
\|\bd(\gamma_t^{-1})^{ij}\|_\tch
\le
\frac{c_{d} \, \ka_{t}^{2}}{\si_{t}^{4}} 
=
\frac{c_{d} \, \eta_{t}}{\si_{t}^{2}},
\end{align*}
which yields the claim \eqref{gamma} for $m=1$. Similarly, by using equation (\ref{D gamma}) repeatedly, we  obtain the general case of relation \eqref{gamma}.
\end{proof}

We can now conclude this section by giving a short proof of the main theorem.

\begin{proof}[Proof of Theorem \ref{thm:upper-bnd-density}]
We plug the estimates \eqref{eq:concentration-X}, \eqref{derivative} and \eqref{gamma} into \eqref{bound}. This easily yields the claim \eqref{eq:exp-bound-irregular}.
\end{proof}

\begin{remark} Concerning the dependence of the constants $c_1,c_2$ in \eqref{eq:exp-bound-irregular} on $T$  we note the following:
\noindent
\emph{(i)} 
An analysis of the proof of Proposition \ref{prop:exp-moments-rdes} yields that $c_2$ can be chosen independently of the time horizon $T$.

\noindent
\emph{(ii)} The dependence of $c_1$ on $T$ is less explicit, since it relies on the constant $c_X$ appearing in Hypothesis \eqref{assumption1}, which in turn is intimately linked to the variance of the driving process $X$ (cf.\ e.g.\ Example \ref{example_stat}). In the case of fractional Brownian motion, Hardy-Littlewood's lemma (see e.g \cite[Equation (5.20)]{Nu06}) reveals that $c_{X}$ is bounded from below uniformly in $T$. Assuming that this is the case, an analysis of the derivation of \eqref{derivative} shows that $c_2$ depends on $T$ via $M^{\ka_{T}^{2/(1+1/\rho)}}$ for some $M>1$.
%
%
%
\end{remark}

\section{Varadhan estimate}
\label{sec:varadhan}
 Fix a small parameter $\eps\in(0,1]$, and consider the solution $Z_t^\eps$ to the stochastic differential equation 
\begin{align}\label{equ: SDE} 
{Z_t^\eps=z+\int_0^tV_0(Z_s^\eps)ds+\eps\sum_{i=1}^d\int_0^tV_i(Z_s^\eps)dX_s^i,}\quad \forall t\in[0,T],
\end{align}
where, as before, the vector fields $V_0,V_1,\ldots,V_d$ are $C^\infty$-bounded vector fields on $\R^n$. In this section we will work under the same assumptions as in Section \ref{sec:upper-bounds} which are summarized as follows.

\begin{hypothesis}\label{hyp:varadhan}
Let $X$ be an $\R^{d}$-valued continuous, centered Gaussian process starting at zero with i.i.d.\ components and covariance function $R$ 
satisfying Hypothesis~\ref{hyp:mixed-var-R}. We further assume that $X$ satisfies Hypothesis \ref{hyp:correlation-increments-X} and~\ref{assumption1} and that the vector fields $V_1,\ldots ,V_d$ satisfy Hypothesis \ref{hyp:elliptic}. Without loss of generality we choose $T=1$.
\end{hypothesis}

With Hypothesis \ref{hyp:varadhan} at hand, we will describe the asymptotic behavior of the density of $Z_t^\eps$ as $\ep\to 0$. We start by recalling the large deviation setting for rough paths in Section \ref{sec:large-deviation}, and will complete the estimates in Section \ref{sec:asymptotic-varadhan}.

\subsection{Large deviations setting}\label{sec:large-deviation}

Let us first recall that under Hypothesis \ref{hyp:varadhan}, $X$ can be lifted to a p-rough path with $p>2\rho$. According to the general rough path theory (see, e.g., inequality (10.15) and Theorem 15.33 in \cite{FV-bk}), for any positive $\lambda$ and $\delta<2/p$ we have 
\begin{equation}\label{eq:exp-delta-moments}
\me\left[\exp\left( \lambda \sup_{t\in [0,1], \epsilon \in (0,1]}|Z^\eps_t|^\delta\right)\right]<\infty.
\end{equation}
In addition, the Malliavin derivative  and Malliavin matrix  of $Z_1^\eps$ can be controlled  using the same arguments as in the previous section. More precisely,  replacing the $V_i$'s with $\eps V_i$'s in the proof of Propositions \ref{est Malliavin derivative} and \ref{est Malliavin Matrix}, we have
\begin{align}
&\sup_{\eps\in(0,1]}\|Z_1^\eps\|_{k,r}<\infty,\quad \mathrm{\ for\ each\ } k\geq 1 \mathrm{\ and }\  r\geq 1;\label{M derivative}\\
&\ \|\gamma_{Z_1^\eps}^{-1}\|_r\leq c_r \eps^{-2},\quad \mathrm{for\ any\ } r\geq 1,\label{M matrix}
\end{align}
where $\gamma_{Z_1^\eps}$ is the Malliavin matrix of $Z^\eps_1$.

Denote by $\bj^\eps$ the Jacobian  of $Z^\eps$. Similar to \eqref{eq:jacobian}, the process $\bj^\eps$ is the unique solution to the linear equation
\begin{equation*}
{\bj_{t}^\eps = \id_{n} +\int_0^tDV_0(Z_s^\eps)\bj_s^\eps ds+
\eps\sum_{j=1}^d \int_0^t DV_j (Z^{\eps}_s) \, \bj_{s}^\eps \, dX^j_s.}
\end{equation*}
Its moments are uniformly bounded (in $\eps\in(0,1]$) in the next proposition.
\begin{proposition}\label{prop:moments-jacobian}
For any $\eta\ge 1$, there exists a finite constant $c_\eta$ such that the Jacobian $\bj^\eps$ satisfies 
\begin{equation}
\sup_{\eps\in(0,1]} \EE\lc  \Vert \bj^\eps \Vert^{\eta}_{p-{\rm var}; [0,1]} \rc = c_\eta.
\end{equation}
\end{proposition}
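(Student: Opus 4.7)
The plan is to reduce the statement to the $\epsilon = 1$ case treated in Proposition \ref{prop:deriv-sde}(i), by exploiting the linearity of the Jacobian equation in $\mathbf{J}^\epsilon$ together with the favorable scaling of the rough path lift under $X \mapsto \epsilon X$. The point is that multiplying the driver by $\epsilon \le 1$ can only decrease both the $p$-variation norm of the rough path and the greedy partition counter that controls the deterministic growth of linear RDEs.

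More concretely, the first step is to observe that the equation for $\mathbf{J}^\epsilon$ is a linear RDE driven by the rough path lift $\epsilon \mathbf{X}$ of $\epsilon X$, with coefficients $DV_j$ that do not depend on $\epsilon$. Applying the Cass--Litterer--Lyons linear bound invoked in \eqref{eq:upp-bnd-J-with-B-p-var} (see \cite[inequality (4.10)]{CLL}) directly to this rescaled equation yields
\begin{equation*}
\|\mathbf{J}^\epsilon\|_{p\text{-var};[0,1]} \,\le\, C\, \|\epsilon\mathbf{X}\|_{p\text{-var};[0,1]}\,\exp\bigl(C\, N_{\alpha,1,p}(\epsilon\mathbf{X})\bigr),
\end{equation*}
where $N_{\alpha,1,p}(\epsilon\mathbf{X})$ is the greedy counter defined as in \eqref{eq:def-tau-i} but relative to the control $\omega_{\epsilon\mathbf{X},p}$, and the constant $C$ depends on $V_0,\dots,V_d, p, \alpha$ only.

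The second step is to exploit the identity $(\epsilon\mathbf{X})^{\mathbf n} = \epsilon^n \mathbf{X}^{\mathbf n}$ at each level $n \le \lfloor p\rfloor$. Together with the definition \eqref{eq:def-control-B} of $\omega_{\mathbf{X},p}$, this gives $\omega_{\epsilon\mathbf{X},p}(s,t) \le \omega_{\mathbf{X},p}(s,t)$ for every $[s,t]\subset[0,1]$ and every $\epsilon\in(0,1]$. By direct inspection of the stopping rule \eqref{eq:def-tau-i}, this monotonicity transfers to the greedy counter, so that
\begin{equation*}
\|\epsilon\mathbf{X}\|_{p\text{-var};[0,1]} \,\le\, \|\mathbf{X}\|_{p\text{-var};[0,1]}
\quad\text{and}\quad
N_{\alpha,1,p}(\epsilon\mathbf{X}) \,\le\, N_{\alpha,1,p}(\mathbf{X}).
\end{equation*}
Combining the two previous displays, one obtains the $\epsilon$-independent pointwise bound
\begin{equation*}
\|\mathbf{J}^\epsilon\|_{p\text{-var};[0,1]} \,\le\, C\, \|\mathbf{X}\|_{p\text{-var};[0,1]}\,\exp\bigl(C\, N_{\alpha,1,p}(\mathbf{X})\bigr).
\end{equation*}

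The third step is to take $\eta$-th moments. By Cauchy--Schwarz,
\begin{equation*}
\EE\bigl[\|\mathbf{J}^\epsilon\|_{p\text{-var};[0,1]}^\eta\bigr]
\,\le\, C^\eta\,\EE\bigl[\|\mathbf{X}\|_{p\text{-var};[0,1]}^{2\eta}\bigr]^{1/2}
\,\EE\bigl[\exp(2C\eta\, N_{\alpha,1,p}(\mathbf{X}))\bigr]^{1/2}.
\end{equation*}
The first factor is finite by Fernique's theorem for Gaussian rough paths (see e.g.\ \cite[Theorem 15.33]{FV-bk}), while the second factor is finite thanks to the tail estimate \eqref{eq:concentration-N}: indeed, since $q < 2$ by Theorem \ref{thm:refined_CM_embedding}, we have $2/q > 1$, so $N_{\alpha,1,p}(\mathbf{X})$ has tails decaying faster than any exponential. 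Both factors are independent of $\epsilon$, which yields the claim.

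The main potential obstacle is justifying the monotonicity of $N_{\alpha,1,p}$ under scaling of the driver; once this is observed, everything else amounts to transferring estimates that are already available in the paper from the unscaled to the scaled setting. Since the scaling acts diagonally on the rough path levels, this monotonicity follows directly from the definition of the control, so no genuinely new idea is required.
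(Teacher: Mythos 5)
Your proof is correct, but it takes a genuinely different route from the paper's. The paper keeps the driver $\mathbf{X}$ fixed and absorbs $\eps$ into the vector fields: it observes that the Cass--Litterer--Lyons integrability estimates recalled in Proposition \ref{prop:deriv-sde} depend on the vector fields only through the supremum norms of the $V_i$ and their derivatives, and since these norms for $\eps V_i$ are uniformly bounded over $\eps\in(0,1]$, the $\eps=1$ statement holds uniformly in $\eps$ — a one-line constant-tracking remark. You instead keep the vector fields fixed and absorb $\eps$ into the driver, exploiting the dilation identity $(\eps\mathbf{X})^{\bn}=\eps^{n}\mathbf{X}^{\bn}$ to get $\om_{\eps\bX,p}\le\om_{\bX,p}$ and hence monotonicity of the greedy counter (your induction $t_i'\ge t_i$ indeed only needs monotonicity of the control in the interval, which $p$-variation norms satisfy), and you then conclude via \eqref{eq:upp-bnd-J-with-B-p-var}, Cauchy--Schwarz, Fernique, and the tail estimate \eqref{eq:concentration-N}, where $2/q>1$ guarantees that $\exp(\la N_{\alpha,1,p})$ is integrable for every $\la>0$. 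Your version is more quantitative — it yields an explicit $\eps$-independent random majorant of $\Vert\bj^\eps\Vert_{p-{\rm var};[0,1]}$ — at the price of invoking the explicit CLL bound and tail estimate, whereas the paper's argument requires only the qualitative statement that the constants in \cite{CLL} depend on the data through vector-field bounds.

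One small point you should make explicit: the drift $V_0$ in \eqref{equ: SDE} is not multiplied by $\eps$, so $(Z^\eps,\bj^\eps)$ is driven by the Young pairing of $t\mapsto t$ with $\eps\mathbf{X}$, not by $\eps\mathbf{X}$ alone. This is harmless — the time component is unaffected by the scaling while the remaining levels only shrink for $\eps\le1$, so both the monotonicity of the control and of $N_{\alpha,1,p}$ persist, and the CLL bound applies to drivers with a drift component — but as stated your first step silently drops the $ds$-term (the paper makes the analogous simplification explicit by setting $V_0=0$ in the proof of Proposition \ref{est Malliavin derivative}).
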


\begin{proof}
When $\eps=1$, the integrability of $\bj^\eps$ is proved  in \cite{CLL}, and has been recalled in Proposition \ref{prop:deriv-sde} above.  It can be checked that the estimates in \cite{CLL} only depends on the supremum norm of the vector fields and their derivatives. In the present case, the vector fields $\eps V_i$ in equation (\ref{equ: SDE}) are uniformly bounded in $\eps\in (0,1]$ together with their derivatives. Hence the uniform integrability of $\bj^\eps$ (in $\eps$) follows. 
\end{proof}

In order to state a  large deviation type result, let us introduce the so-called skeleton of equation \eqref{equ: SDE}, that is, we introduce the map 
$\Phi: \ch\to \mathcal{C}([0,1],\mathbb{R}^{n})$ associating to each $h\in  \ch$ the unique solution of the ordinary differential equation
\begin{align}\label{phi}
{\Phi_t(h)=z+\int_0^tV_0(\Phi_s(h))ds+\sum_{i=1}^d\int_0^tV_i(\Phi_s(h))dh_s^i.}
\end{align}
By the embedding Theorem \ref{thm:refined_CM_embedding}, for each $h\in\ch$, the above equation can be understood in Young sense. In particular, it follows that there is a unique solution $\Phi_\cdot(h)$.  Moreover, $\Phi_t$ is a differentiable mapping from $\ch$ to the space $\mathcal{C}([0,1],\mathbb{R}^{n})$. We let $\gamma_{\Phi_1(h)}$ be the deterministic Malliavin matrix
 of $\Phi_1(h)$,  that is,
\begin{equation}\label{eq:def-deterministic-malliavin}
\gamma^{ij}_{\Phi_1(h)}=\langle \bd \Phi_1^i(h), \bd\Phi_1^j(h)\rangle_\tch.
\end{equation}
Along the same lines, we introduce the Jacobian $J(h)$ of equation (\ref{phi}), that is the unique solution of the following equation
\begin{equation}\label{eq:def-jacob-h}
J_t(h)=\id_{n}+\sum_{i}\int_0^tDV_i(\Phi_s(h))J_s(h)dh_s^i+\int_0^tDV_0(\Phi_s(h))J_s(h)ds.
\end{equation}

\begin{remark}\label{convention phi} For a geometric p-rough path $ \mathbf{x}$, it is sometimes convenient to write $\Phi(\mathbf{x})$ obtained by solving (\ref{phi}) with $h$ replaced with $\mathbf{x}$. By the general theory of rough path, $\Phi$ is a continuous function of $\mathbf{x}$ in the p-variation topology. We will use this notation without further mention when there is no confusion. 
\end{remark}

\begin{remark}\label{convention x+h} Let $X$ be an $\mr^d$-valued Gaussian process satisfying Hypothesis \ref{hyp:varadhan} and let $h\in\ch$ be an element of the Cameron-Martin space of $X$. We use the notation $\mathbf{X}+h$ to denote lift of $X+h$ to a $p$-rough path. This construction is made possible by the embedding in Theorem~\ref{thm:refined_CM_embedding} and Young's pairing. We direct the readers to Section 9.4 of \cite{FV-bk} for more details.
\end{remark}
{
The following lemma will be needed later. 
\begin{lemma}For each $h\in\ch$, we have
\begin{align}\label{smoothness in eps}
\lim_{\eps\downarrow0}\frac{1}{\eps}\left(\Phi_t(\eps {\bf{X}}+h)-\Phi_t(h)\right)=G_t(h),
\end{align}
in the topology of $\md^\infty$, and $G_t(h)$ satisfies an SDE of the form
\begin{multline}\label{equ G}
G_t(h)=\int_0^tDV_0(\Phi_s(h))G_s(h)ds+\sum_{i=1}^d\int_0^tDV_i(\Phi_s(h))G_s(h)dh^i_s \\ 
+\sum_{i=1}^d\int_0^tV_i(\Phi_s(h))dX^i_s.
\end{multline}
\end{lemma}
\begin{proof}
Note that $\Phi_t(\eps{\bf{X}}+h)$ satisfies the following rough SDE
\begin{align}\label{equation for phi ep}{\Phi_t(\eps{\bf{X}}+h)=z+\int_0^tV_0(\Phi_s(\eps{\bf{X}}+h))ds+\sum_{i=1}^d\int_0^tV_i(\Phi_s(\eps{\bf{X}}+h))d(\eps{\bf{X}}^i+h_s^i).}\end{align}
By standard path-wise estimates, $\Phi_t(\eps{\bf{X}}+h)$ is smooth in $\eps$ and its derivatives satisfy a rough SDE obtain by formally differentiating \eqref{equation for phi ep} on both sides (see, e.g., \cite[Proposition 11.4]{FV-bk}). In particular, at $\eps=0$, we have
\begin{align*}
\lim_{\eps\downarrow0}\frac{1}{\eps}\left(\Phi_t(\eps {\bf{X}}+h)-\Phi_t(h)\right)=G_t(h),
\end{align*}
where $G_t(h)$ satisfies the equation \eqref{equ G}. The fact that the above convergence takes place in $\mathbb{D}^\infty$ follows the same lines of the proof of Proposition 2.14 in \cite{BO-Varadhan}.
\end{proof}
}
Comparing equations \eqref{equ G} and \eqref{eq:def-jacob-h}, an elementary 
variational principle argument reveals that
\begin{align}\label{explicit G}
G_t(h)=J_t(h)\int_0^t (J_s(h))^{-1}V_i(\Phi_s(h))dX^i_s,
\end{align}
which implies that $G_t(h)$ is a centered Gaussian random variable. Moreover, starting from equation (\ref{explicit G}), some easy computations show that the Malliavin derivative of $G_t(h)$ and the deterministic Malliavin derivative of $\Phi$ at $h$ coincide.
Hence, the covariance matrix of $G_{1}(h)$ is the deterministic Malliavin matrix $\gamma_{\Phi_1(h)}$.

As a last preliminary step we recall the large deviation principle for stochastic differential equations driven by Gaussian rough path, which is the basis for Varadhan type estimates and is standard in rough paths theory (see \cite[Section 19.4]{FV-bk}).

\begin{theorem}\label{th: LDP}
Let $\Phi$ be as in (\ref{phi}), $Z_1^\eps$ be the solution to equation (\ref{equ: SDE}) and set
$$I(y):=\inf_{\Phi_1(h)=y}\frac{1}{2}\|h\|_{\ch}^2\quad\forall y\in\mr^n. $$
Then $Z_1^\eps$ satisfies a large deviation principle with rate function $I(y)$.
\end{theorem}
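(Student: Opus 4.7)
The plan is to reduce the statement to an application of the contraction principle, by first establishing a Schilder-type large deviation principle for the rescaled Gaussian rough path $\eps\bX$ in the $p$-variation topology, and then transferring it through the continuous Itô-Lyons solution map $\Phi$. This is exactly the framework set up in Section 19.4 of \cite{FV-bk}, and our task is to verify that all hypotheses used there are implied by Hypothesis \ref{hyp:varadhan}.

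First, I would recall that classical Schilder's theorem for centered Gaussian measures on Banach spaces implies that $\eps X$ satisfies an LDP on $\cac([0,1],\R^{d})$ (equipped e.g.\ with uniform topology) with good rate function $\frac{1}{2}\|h\|_{\bch}^{2}$ for $h\in\bch$ and $+\infty$ otherwise. The point is then to upgrade this LDP to the rough path level in the $p$-variation topology, where $p>2\rho$ and $\rho$ is the exponent from Hypothesis \ref{hyp:mixed-var-R}. Since the lift map is not continuous, this upgrade is the main obstacle; it is carried out by the extended contraction principle combined with exponential tightness of $\eps\bX$ in $p$-variation. Concentration of Gaussian rough paths, obtained via the Borell--Sudakov--Tsirelson inequality applied to the homogeneous $p$-variation norm (see \cite[Theorem 15.33]{FV-bk}), together with the complementary Young regularity provided by the embedding $\bch\hookrightarrow\cac^{q\text{-var}}$ of Theorem \ref{thm:refined_CM_embedding}, yields the needed exponential approximation by piecewise-linear smoothings $\eps X^{\ep}$, whose lifts converge in $p$-variation to $\eps\bX$. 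One thereby obtains the LDP for $\eps\bX$ in the $p$-variation topology with rate function
\begin{equation*}
\Lambda(\mathbf{x})=
\begin{cases}
\tfrac{1}{2}\|h\|_{\bch}^{2} & \text{if $\mathbf{x}$ is the canonical lift of some $h\in\bch$,}\\
+\infty & \text{otherwise.}
\end{cases}
\end{equation*}

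Once the rough path LDP is in hand, I would invoke the continuity of the solution map $\Phi_{1}\colon\mathbf{x}\mapsto Z_{1}(\mathbf{x})$ in the $p$-variation topology, which is the content of the universal limit theorem of Lyons (see Theorem \ref{thm:exist-uniq-rde-rough} and Remark \ref{convention phi}). The standard contraction principle then yields an LDP for $Z_{1}^{\eps}=\Phi_{1}(\eps\bX)$ with good rate function
\begin{equation*}
I(y)=\inf\{\Lambda(\mathbf{x}):\Phi_{1}(\mathbf{x})=y\}
=\inf_{\Phi_{1}(h)=y}\frac{1}{2}\|h\|_{\bch}^{2},
\end{equation*}
where the second equality uses that elements $\mathbf{x}$ with $\Lambda(\mathbf{x})<\infty$ are precisely the lifts of Cameron-Martin paths and that on such lifts $\Phi_{1}(\mathbf{x})$ coincides with the Young solution $\Phi_{1}(h)$ defined in \eqref{phi}. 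This gives exactly the rate function in the statement.

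The hard part of the argument is genuinely the lifted Schilder theorem in $p$-variation, and in particular the exponential approximation step: one must control, uniformly in $\eps$, the $p$-variation distance between $\eps\bX$ and its piecewise-linear approximants at an exponential scale, which requires both the mixed $(1,\rho)$-variation control on $R$ from Hypothesis \ref{hyp:mixed-var-R} (to obtain the necessary tail bounds for the iterated integrals) and complementary Young regularity of $\bch$. All the remaining pieces---continuity of $\Phi$, contraction principle, identification of the rate function---are then routine applications of well-established machinery.
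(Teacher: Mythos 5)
Your proposal is correct and follows essentially the same route as the paper: the paper simply cites the Schilder-type LDP for $\eps\mathbf{X}$ in $p$-variation topology (\cite[Theorem 15.55]{FV-bk}) and then applies the contraction principle through the continuity of $\Phi_1$ in the $p$-variation topology, exactly as you do. The additional detail you provide on how the rough-path LDP itself is established (exponential approximations, Borell-type concentration, complementary Young regularity) is just an unpacking of the cited result rather than a different argument.
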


\begin{proof}
First, it is known  (see, e.g., \cite[Theorem 15.55]{FV-bk}) that $\eps\mathbf{X}$, as a p-rough path, satisfies a large deviation principle in the $p$-variation topology with good rate function given by
\begin{align*}
\mathrm{Rt}(h)=\left\{\begin{array}{ll}\frac{1}{2}\|h\|^{2}_{\ch}\ \mathrm{if}\ h\in\ch\\ +\infty\quad \mathrm{otherwise}.
\end{array}
\right.
\end{align*}
Moreover, by Remark \ref{convention phi}, $\Phi_1(\mathbf{x})$ is continuous function of $\mathbf{x}$ in $p$-variation topology. Since $Z_1^\eps=\Phi_1(\eps \mathbf{X})$ the result follows from the contraction principle.\end{proof}

\subsection{Asymptotic behavior of the density}\label{sec:asymptotic-varadhan}

Recall that the skeleton $\Phi$ is defined by \eqref{phi}. Our density estimates will involve a ``distance''  which depends on $\Phi$ as follows
\begin{equation}\label{eq:def-distance-varadhan}
d^2(y)=I(y)=\inf_{\Phi_1(h)=y}\frac{1}{2}\|h\|_{\ch}^2,\quad\mathrm{and}\quad d^2_R(y)=\inf_{\Phi_1(h)=y, \det\gamma_{\Phi_1(h)}>0}\frac{1}{2}\|h\|_{\ch}^2.
\end{equation}
{When \eqref{equ: SDE} has no drift term and is driven by a standard Brownian motion, it is shown in \cite[Theorem 1.1]{Leandre PTRF87} that under strong H\"{o}rmander conditions the above two distances are the same.}
Interestingly enough, the two distances $d$ and $d_{R}$ always coincide under the ellipticity assumptions (even with the presence of a drift).
\begin{lemma}\label{equiv distance}
Assume that Hypothesis \ref{hyp:varadhan} is satisfied. Then we have $d^2(y)=d^2_R(y)$ for every $y\in\mr^n$.

\end{lemma}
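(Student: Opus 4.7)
The inequality $d^2(y)\le d_R^2(y)$ is immediate since the infimum in $d_R^2$ is taken over a subset of the set used in $d^2$. For the converse, the strategy is to show that, under Hypothesis~\ref{hyp:varadhan}, the constraint $\det\gamma_{\Phi_1(h)}>0$ is in fact automatic: for every $h\in\bar{\mathcal{H}}$, the deterministic Malliavin matrix $\gamma_{\Phi_1(h)}$ defined in \eqref{eq:def-deterministic-malliavin} is strictly positive definite. Once this is established, the two infima in \eqref{eq:def-distance-varadhan} are taken over the same set, so $d^2(y)=d_R^2(y)$.

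To prove nondegeneracy of $\gamma_{\Phi_1(h)}$, fix $h\in\bar{\mathcal{H}}$ and $y\in\mr^n\setminus\{0\}$. Differentiating equation \eqref{phi} in the direction of a Cameron–Martin element and using the Jacobian $J(h)$ from \eqref{eq:def-jacob-h}, one gets the standard representation $\bd^k_s\Phi_1(h)=J_{s,1}(h)V_k(\Phi_s(h))$, where $J_{s,1}(h)=J_1(h)J_s(h)^{-1}$. Hence
\begin{equation*}
y^*\gamma_{\Phi_1(h)}y=\sum_{k=1}^{d}\|g^k\1_{[0,1]}\|_{\mathcal{H}}^2,
\qquad g^k_s:=y^\top J_{s,1}(h)V_k(\Phi_s(h)).
\end{equation*}
At $s=1$ we have $J_{1,1}(h)=\mathrm{Id}$ and therefore, by the ellipticity assumption~\eqref{eq:hyp-elliptic},
\begin{equation*}
\sum_{k=1}^{d}(g^k_1)^2=y^\top V(\Phi_1(h))V(\Phi_1(h))^*y\ge\lambda|y|^2>0,
\end{equation*}
so at least one index $k_0$ satisfies $\|g^{k_0}\|_{\infty;[0,1]}\ge\sqrt{\lambda/d}\,|y|$.

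It then remains to turn this pointwise lower bound into strict positivity of $\|g^{k_0}\1_{[0,1]}\|_{\mathcal{H}}$. Since $h\in\bar{\mathcal{H}}$ is embedded into $\cac^{q\text{-var}}$ with $q<2$ via Theorem~\ref{thm:refined_CM_embedding}, and since $R$ has H\"older-controlled mixed $(1,\rho)$-variation, the Young RDE \eqref{phi} and the associated linear equation \eqref{eq:def-jacob-h} produce $\Phi(h)$ and $J(h)$ that are $\gamma$-H\"older continuous for some $\gamma$ with $\gamma+1/\rho>1$; composition with smooth vector fields preserves this regularity, so $g^{k_0}\in\mathcal{C}^\gamma\cap\mathcal{H}$. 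The interpolation inequality~\eqref{interpolation3} applied to $g^{k_0}$ on $[0,1]$ then yields
\begin{equation*}
\|g^{k_0}\1_{[0,1]}\|_{\mathcal{H}}\ge
\frac{\si_{1}\|g^{k_0}\|_{\infty;[0,1]}}{2}
\min\!\left\{1,\,\frac{2(c_X/2)^{(2\gamma+\alpha)/(4\gamma)}}{\si_1}\,
\frac{\|g^{k_0}\|_{\infty;[0,1]}^{\alpha/(2\gamma)}}
     {\|g^{k_0}\|_{\gamma;[0,1]}^{\alpha/(2\gamma)}}\right\}>0,
\end{equation*}
the strict positivity coming from $\|g^{k_0}\|_{\infty;[0,1]}>0$ and $\|g^{k_0}\|_{\gamma;[0,1]}<\infty$. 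Hence $y^*\gamma_{\Phi_1(h)}y>0$ for all $y\neq 0$, which completes the argument.

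The only delicate point is justifying the use of the interpolation inequality in this deterministic setting, i.e.\ checking that $g^{k_0}$ genuinely lies in $\mathcal{H}\cap\mathcal{C}^\gamma$ with $\gamma+1/\rho>1$; this is a routine consequence of the Young theory for RDEs driven by Cameron–Martin paths together with the embedding $\bar{\mathcal{H}}\hookrightarrow\cac^{q\text{-var}}$ with $q<2$, so that $1/q\ge 1/2+1/(2\rho)>1-1/\rho$. Everything else is an elementary application of the ellipticity hypothesis.
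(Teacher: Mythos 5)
Your proposal is correct and follows essentially the same route as the paper: both reduce the claim to non-degeneracy of the deterministic Malliavin matrix $\gamma_{\Phi_1(h)}$ for every $h$ with $\Phi_1(h)=y$, using the representation $\bd^k_s\Phi_1(h)=J_1(h)J_s(h)^{-1}V_k(\Phi_s(h))$, the ellipticity bound at the endpoint where the Jacobian is the identity, and the interpolation inequality \eqref{interpolation3}. The only (cosmetic) differences are that you apply the interpolation inequality to a single scalar component $g^{k_0}$ rather than to the vector-valued $f_u=x^\top J_{u1}(h)V(\Phi_u(h))$ as the paper does, and you spell out the H\"older regularity check that the paper leaves implicit.
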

\begin{proof}
The claimed identity is mainly due to the uniform ellipticity of the vector fields $V_i's$. Indeed, pick any $h\in\ch$ such that $\Phi_1(h)=y$. Recall that $J(h)$ is the Jacobian of the deterministic equation (\ref{phi}) and $\gamma_{\Phi_1(h)}$ is the deterministic Malliavin matrix of $\Phi$ at $h$.
{Similarly to \eqref{eq:rep-malliavin-with-jacobian} we have}
$$\mathbf{D}^k_s\Phi_1(h)=J_1(h)(J_s(h))^{-1}V_k(\Phi_s(h)).$$
{Therefore, owing to the definition \eqref{eq:def-deterministic-malliavin} of the Malliavin matrix, we get the following identity for all $x\in\mr^n$}
\begin{align*}
\sum_{ij}x_i\gamma^{ij}_{\Phi_1(h)}x_j&=\sum_{k}\bigg\|\sum_{i}x_i(\mathbf{D}^k\Phi_1(h))^i\bigg\|_{\tch}^2\\
&=\int_0^t\int_0^t \left\langle x^TJ_{u1}(h)V(\Phi_{u}(h))\,,\,x^TJ_{v1}(h)V(\Phi_v(h)\right\rangle dR(u,v).
\end{align*}
Let us now define a function $f$ by
$$f_u=x^TJ_{u1}(h)V(\Phi_u(h)).$$
Under the same assumptions as in Proposition \ref{th: interpolation}, which are satisfied due to Hypothesis~\ref{hyp:varadhan}, we have the interpolation inequality (see relation \eqref{interpolation3})
\begin{align*}
&\int_0^1 \int_0^1 \langle f_{u}, f_{v} \rangle dR(u,v)  
\ge \frac{1}{4}\sigma_1^2\|f\|_{\infty;[0,1]}^2\min\left\{1,\frac{2\left(\frac{c_X}{2}\right)^{\frac{2\gamma+\alpha}{4\gamma}}}{\sigma_1}\frac{\|f\|_{\infty;[0,1]}^{\frac{\alpha}{2\gamma}}}{(1+\|f\|_{\gamma;[0,1]}^{\frac{\alpha}{2\gamma}})}\right\}^2.
\end{align*}
Furthermore,  the uniform ellipticity condition implies that for any $x\not=0$,
$$\|f\|_{\infty;[0,1]}>0.$$
Therefore, the deterministic Malliavin matrix $\gamma_{\Phi_1(h)}$ is non-degenerate at $h$. In conclusion, for any $h\in\ch$ such that $\Phi_1(h)=y$ we have $\det\gamma_{\Phi_1(h)}>0$ and thus $d_R(y)\equiv d(y)$.
\end{proof}

Now we can state  the  main result of this section, giving the logarithmic asymptotic behavior of the density as $\ep\to 0$.
\begin{theorem}\label{th: main result}
Let $Z^{\ep}$ be the process defined by \eqref{equ: SDE}, and denote by $p_\eps(y)$ the density of $Z_1^\eps$. Due to Hypothesis \ref{hyp:varadhan},  we have
\begin{align*}
\lim_{\eps\downarrow0}\eps^2\log p_\eps(y)= -d^2(y),
\end{align*}
where $d$ is the function defined by \eqref{eq:def-distance-varadhan}.
\end{theorem}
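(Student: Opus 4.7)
The plan is to establish the matching upper and lower bounds
\begin{equation*}
\limsup_{\eps\downarrow 0}\eps^2\log p_\eps(y) \leq -d^2(y), \qquad
\liminf_{\eps\downarrow 0}\eps^2\log p_\eps(y) \geq -d^2(y),
\end{equation*}
separately. The upper bound will follow from the large deviation principle of Theorem \ref{th: LDP} combined with a localized variant of the Malliavin integration by parts formula from Proposition \ref{density}. The lower bound proceeds via a Cameron--Martin shift by a near-minimizer of the rate function together with a careful asymptotic analysis of the shifted density, made tractable by the uniform non-degeneracy granted by Lemma \ref{equiv distance}.

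For the upper bound, I would apply Proposition \ref{density} with the weight $G=\chi_\delta(Z_1^\eps)$, where $\chi_\delta\in C_c^\infty(\mathbb{R}^n)$ is a cutoff supported in $B(y,\delta)$ with $\chi_\delta(y)=1$. Since $\chi_\delta(y)=1$ this yields
\begin{equation*}
p_\eps(y) = (-1)^{n-|\sigma|}\,\me\bigl[\mathbf{1}_{A_y^\sigma}\,H_{(1,\ldots,n)}\bigl(Z_1^\eps,\chi_\delta(Z_1^\eps)\bigr)\bigr]
\end{equation*}
for a suitable orthant $A_y^\sigma$, so that Cauchy--Schwarz gives $p_\eps(y)\leq \mp(Z_1^\eps\in B(y,\delta))^{1/2}\,\|H_{(1,\ldots,n)}(Z_1^\eps,\chi_\delta(Z_1^\eps))\|_{L^2}$. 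The Malliavin weight is controlled via \eqref{Holder} together with the uniform-in-$\eps$ bounds \eqref{M derivative} and \eqref{M matrix}, and it grows at most polynomially in $\eps^{-1}$, which contributes $O(\eps^2\log\eps^{-1})\to 0$ on the logarithmic scale. The ball probability is handled by the LDP: $\limsup_{\eps\downarrow 0}\eps^2\log\mp(Z_1^\eps\in\bar B(y,\delta))\leq -\inf_{w\in\bar B(y,\delta)}I(w)$. Letting $\delta\downarrow 0$ and invoking lower semi-continuity of $I=d^2$ closes the upper bound.

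For the lower bound, fix $\eta>0$ and pick $h^\star\in\bch$ with $\Phi_1(h^\star)=y$ and $\tfrac12\|h^\star\|_{\bch}^2\leq d^2(y)+\eta$; by Lemma \ref{equiv distance} the deterministic Malliavin matrix $\gamma_{\Phi_1(h^\star)}$ is automatically non-degenerate. Applying the Cameron--Martin formula with the translation $X\mapsto X-h^\star/\eps$ to the functional $F(X)=\delta_y(\Phi_1(\eps X+h^\star))$ produces the exact identity
\begin{equation*}
p_\eps(y) \;=\; e^{-\|h^\star\|_{\bch}^2/(2\eps^2)}\;\me\bigl[\delta_y(Y^\eps_1)\,e^{-X(h^\star)/\eps}\bigr],
\qquad Y^\eps_1 := \Phi_1(\eps X+h^\star).
\end{equation*}
The remaining task is to show that the ``smeared density'' factor is at least $\exp(o(1/\eps^2))$. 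Here the expansion \eqref{smoothness in eps} plays the central role: $Y^\eps_1 = y + \eps\,G_1(h^\star) + O_{\md^\infty}(\eps^2)$, with $G_1(h^\star)$ a centered Gaussian of covariance $\gamma_{\Phi_1(h^\star)}$. Passing to the rescaled variable $w^\eps:=\eps^{-1}(Y^\eps_1-y)$ and performing a Malliavin integration by parts on $w^\eps$---whose Malliavin matrix is uniformly non-degenerate in $\eps$ thanks to Lemma \ref{equiv distance}---combined with Fernique-type Gaussian integrability of $X(h^\star)$ yields $\me[\delta_y(Y^\eps_1)e^{-X(h^\star)/\eps}]\geq C\eps^{-n}\exp(o(1/\eps^2))$. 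Therefore $\liminf_{\eps\downarrow 0}\eps^2\log p_\eps(y)\geq -\tfrac12\|h^\star\|_{\bch}^2\geq -d^2(y)-\eta$, and sending $\eta\downarrow 0$ concludes.

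The main obstacle is the smeared density estimate just described. Although the exponential weight $e^{-X(h^\star)/\eps}$ carries a factor of $\eps^{-1}$ in its exponent, $X(h^\star)$ is a fixed centered Gaussian and its interaction with the constraint $Y^\eps_1\approx y$ should not produce any exponential loss beyond $\exp(o(1/\eps^2))$. Making this rigorous requires either a Laplace-type asymptotic for the joint law of $(w^\eps,X(h^\star))$---built from uniform Malliavin estimates on $Y^\eps_1$ in the spirit of \eqref{M derivative}, \eqref{M matrix} together with the uniform non-degeneracy secured by Lemma \ref{equiv distance}---or a second Cameron--Martin shift that eliminates the exponential weight at the cost of modifying $Y^\eps_1$ only by lower-order terms. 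Either route is technical, but once carried out the assembly of the remaining pieces (LDP, Girsanov, and Malliavin estimates) is standard.
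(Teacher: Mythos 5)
Your upper bound is, in substance, the paper's own argument (cutoff near $y$, integration by parts via Proposition \ref{intbyparts}/\ref{density}, the bound \eqref{Holder} together with \eqref{M derivative}--\eqref{M matrix} showing the Malliavin weight is only polynomially large in $\eps^{-1}$, then the LDP of Theorem \ref{th: LDP} and lower semi-continuity), and that part is fine. The lower bound also follows the paper's strategy -- Cameron--Martin shift by a near-minimizer $h^\star$, the expansion \eqref{smoothness in eps}, non-degeneracy of $G_1(h^\star)$ guaranteed by Lemma \ref{equiv distance}, and the $\eps^{-n}$ scaling of the Dirac mass -- but it stops short at the decisive step, and you say so yourself: the claimed estimate $\me\bigl[\delta_y(Y^\eps_1)e^{-X(h^\star)/\eps}\bigr]\ge C\eps^{-n}\exp(o(1/\eps^2))$ is never proved. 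The two routes you sketch do not obviously work as stated: Fernique/Gaussian integrability of $X(h^\star)$ controls moments of the weight, but H\"older-type inequalities only give upper bounds on the product and cannot produce the needed lower bound on an expectation in which the weight can be exponentially small precisely where $\delta_y(Y^\eps_1)$ concentrates; and a ``second Cameron--Martin shift'' or a Laplace asymptotic for the joint law of $(w^\eps,X(h^\star))$ is far heavier machinery than the situation requires. As written, the lower bound therefore has a genuine gap.

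The paper closes exactly this gap with an elementary truncation. Choose $\chi\in C^\infty(\mr)$ with $0\le\chi\le1$, $\chi=1$ on $[-\eta,\eta]$ and $\chi=0$ outside $[-2\eta,2\eta]$, and insert the factor $\chi(\eps X(h^\star))$ into the shifted expectation. Since the pairing of $\delta_y(\Phi_1(\eps X+h^\star))\in\md^{-\infty}$ against a nonnegative element of $\md^{\infty}$ is nonnegative, inserting $\chi$ can only decrease the expectation, while on the support of $\chi(\eps X(h^\star))$ one has $e^{-X(h^\star)/\eps}\ge e^{-2\eta/\eps^2}$; the Girsanov weight is thus replaced by a deterministic loss $e^{-2\eta/\eps^2}$ which is absorbed into the arbitrary $\eta$. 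What remains is
\begin{equation*}
\me\bigl[\chi(\eps X(h^\star))\,\delta_y(\Phi_1(\eps X+h^\star))\bigr]
=\eps^{-n}\,\me\Bigl[\chi(\eps X(h^\star))\,\delta_0\Bigl(\tfrac{\Phi_1(\eps X+h^\star)-y}{\eps}\Bigr)\Bigr],
\end{equation*}
and by \eqref{smoothness in eps} together with the non-degeneracy of $G_1(h^\star)$ (this is where Lemma \ref{equiv distance} is really used) the bracketed expectation converges to $\me[\delta_0(G_1(h^\star))]\in(0,\infty)$, so its logarithm is $o(\eps^{-2})$. With this truncation in place of your unproved smeared-density estimate, your lower bound closes exactly as in the paper; without it, the proof is incomplete.
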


\begin{proof}
With the previous estimates in hand, the proof is similar to the one of \cite[Theorem 3.2]{BO-Varadhan}. For the reader's convenience, we give some details below. Let us divide the proof in two steps.

\noindent
\emph{Step 1: Lower bound.} We shall prove that
\begin{align}\label{main claim 1}
\liminf_{\eps\downarrow0}\eps^2\log p_\eps(y)\geq -d^2_R(y).
\end{align}
To this aim, fix $y\in\mr^n$. We only need to show \eqref{main claim 1} for  $d^2_R(y)<\infty$, since the statement is trivial whenever $d^2_R(y)=\infty$. Next fix an arbitrary $\eta>0$ and let $h\in\ch$ be such that $\Phi_1(h)=y$ and $\|h\|^2_{\ch}\leq d^2_R(y)+\eta$. Let $f\in C_0^\infty(\mr^n).$ By Cameron-Martin's theorem for the Gaussian process $X$, it is readily checked that
$$\me\lc f(Z^\eps_1)\rc=e^{-\frac{\|h\|_{\ch}^2}{2\eps^2}} \, 
\me \lc f(\Phi_1(\eps X+h))e^{-\frac{X(h)}{\eps}}\rc,$$ 
where  $X(h)$ denotes the Wiener integral of $h$ with respect to $X$ introduced in Section~\ref{sec:wiener-space-general}. We now proceed by means of a truncation argument: consider a function $\chi\in C^\infty(\mr)$, satisfying  $0\leq \chi\leq 1$, such that $\chi(t)=0$ if $t\not\in[-2\eta, 2\eta]$, and $\chi(t)=1$ if $t\in[-\eta,\eta]$. Then, if $f\geq 0$, we have
$$\me\lc f(Z^\eps_1)\rc
\geq e^{-\frac{\|h\|^2_{\ch}+4\eta}{2\eps^2}}\, 
\me\lc \chi(\eps X(h))f(\Phi_1(\eps X+h))\rc.
$$
Hence, by means of an approximation argument applying the above estimate to $f=\delta_{y}$, we obtain
\begin{align}\label{lower bound claim1}
\eps^2\log p_\eps(y)\geq 
-\left(\frac{1}{2}\|h\|_{\ch}^2+2\eta\right)
+\eps^2\log\me\big[\chi(\eps X(h))\delta_y(\Phi_1(\eps X+h))\big].
\end{align}
Indeed, for any non-degenerate random vector $F$, the distribution on Wiener's space $\delta_y(F)$ is an element in $\md^{-\infty}$, the dual of $\md^{\infty}$. The expression $\me [\delta_y(F)G]$ can thus be interpreted as the coupling $\langle \delta_y(F), G\rangle$ for any $G\in\md^{\infty}$ (see \cite[Section 2.1.5]{Nu06}).

Let us now bound the right hand side of equation \eqref{lower bound claim1}. Owing to the fact that $\Phi_1(h)=y$ and thanks to the scaling properties of the Dirac distribution, it is easily seen that
$$
\me\big(\chi(\eps X(h))\delta_y(\Phi_1(\eps X+h))\big)=\eps^{-n}\me\left(\chi(\eps X(h))\delta_0\left(\frac{\Phi_1(\eps X+h)-\Phi_1(h)}{\eps}\right)\right).
$$
In addition, according to the definition \eqref{smoothness in eps}, we have
$$
\lim_{\eps\downarrow 0}\frac{\Phi_1(\eps X+h)-\Phi_1(h)}{\eps}=G_1(h),
$$
and recall that we have established, thanks to \eqref{explicit G}, that $G_1(h)$
is an $n$-dimensional random vector in the first Wiener chaos with variance $\gamma_{\Phi_1(h)}>0$. Hence, $G_1(h)$ is non-degenerate and integrating by parts combined with standard arguments from Malliavin calculus yields 
\begin{equation}\label{b6}
\lim_{\eps\downarrow0}\me\left[\chi(\eps X(h))\delta_0\left(\frac{\Phi_1(\eps X+h)-\Phi_1(h)}{\eps}\right)\right]
=
\me\lc\delta_0(G_1(h))\rc.
\end{equation}
In particular, we get
$$\lim_{\eps\downarrow0}\eps^2\log\me\big(\chi(\eps X(h))\delta_y(\Phi_1(\eps X+h))\big)=0.$$
Plugging this information in (\ref{lower bound claim1})  and letting $\eps\downarrow0$ we end up with
$$
\liminf_{\eps\downarrow0}\eps^2\log p_{\eps}(y)\geq
-\lp \frac{1}{2}\|h\|^2_{\ch}+2\eta\rp
\geq -\lp d^2_R(y)+3\eta\rp.
$$
Since $\eta>0$ is arbitrary this yields (\ref{main claim 1}). {At this point we can notice that we have chosen $h$ such that $\|h\|^2_{\ch}\leq d^2_R(y)+\eta$  in order to get a non degenerate random variable $G_{1}(h)$ in \eqref{b6}.}

\noindent
\emph{Step 2: Upper bound.}
Next, we show that 
\begin{align}\label{main claim 2}
\limsup_{\eps\downarrow0}\eps^2\log p_\eps(y)\leq -d^2(y).
\end{align}
Towards this aim, fix a point $y\in\mr^n$ and consider a function $\chi\in C_0^\infty(\mr^n), 0\leq\chi\leq1$ such that $\chi$ is equal to one in a neighborhood of $y$. The density of $Z_1^\eps$ at point $y$ is given by
$$
p_\eps(y)=\me\lc\chi(Z_1^\eps)\delta_y(Z_1^\eps)\rc.
$$
Next integrate the above expression by parts in the sense of Malliavin calculus thanks to Proposition \ref{intbyparts}. This yields
\begin{align*}
\me[\chi(Z_1^\eps)\delta_y(Z_1^\eps)]=&
\me\left[\mathbf{1}_{\{Z_1^\eps>y\}}H_{(1,2,...,n)}(Z_1^\eps,\chi(Z_1^\eps))\right]\\
\leq&\me\lc |H_{(1,2,...,n)}(Z_1^\eps,\chi(Z_1^\eps))| \rc\\
=&\me\big[|H_{(1,2,...,n)}(Z_1^\eps,\chi(Z_1^\eps))|\mathbf{1}_{\{Z_1^\eps\in \mathrm{supp}\chi\}}\big]\\
\leq&\mp(Z_1^\eps\in\mathrm{supp}\chi)^\frac{1}{q}\|H_{(1,..,n)}(Z_1^\eps,\chi(Z_1^\eps))\|_p,
\end{align*}
where $\frac{1}{p}+\frac{1}{q}=1$. Furthermore, relation \eqref{Holder} and an application of H\"{o}lder's inequality (see, e.g., \cite[Proposition 1.5.6]{Nu06}) gives 
{$$
\|H_{(1,...,n)}(Z_1^\eps,\chi(Z_1^\eps))\|_p\leq c_{p,q}\|\gamma_{Z_1^\eps}^{-1}\|_\beta^m\|\bd Z_1^\eps\|_{n,\gamma}^r\|\chi(Z_1^\eps)\|^n_{n,q},
$$}
for some constants $\beta, \gamma>0$ and integers $m, r$. Thus, invoking the estimates  (\ref{M derivative}) and (\ref{M matrix}), we obtain
$$\lim_{\eps\downarrow0}\eps^2\log\|H_{(1,...,n)}(Z_1^\eps,\chi(Z_1^\eps))\|_p=0.$$
Finally the large deviation principle for $Z_1^\eps$ recalled in Theorem \ref{th: LDP} ensures that for small $\eps$ we have
$$\mp(Z_1^\eps\in\mathrm{supp}\chi)^\frac{1}{q}\leq e^{-\frac{1}{q\eps^2}(\inf_{z\in\mathrm{supp}\chi}d^2(z)+o(1))}.$$
Since $q$ can be chosen arbitrarily close to 1 and $\mathrm{supp}(\chi)$ can be taken arbitrarily close to $y$, the proof of (\ref{main claim 2}) is now easily concluded {thanks to the lower semi-continuity of $d$.}

Combining Lemma \ref{equiv distance},  (\ref{main claim 1}) and (\ref{main claim 2}), the proof of Theorem \ref{th: main result} is thus completed. 
\end{proof}

{
\begin{remark}
It is clear from the proof of Theorem \ref{th: main result} that the key to establish a Varadhan estimate is to have some quantitative control of the Malliavin derivative and Malliavin matrix of $Z^\eps$. More precisely,
\begin{itemize}
\item[(i)] $\sup_{\eps\in(0,1]}\|Z_1^\eps\|_{k,r}<\infty$, for each $k\geq 1$ and $r\geq1$;  and
\item[(ii)]
for any $r\geq1$, $\|\gamma^{-1}_{Z_1^\eps}\|_r\leq c_r\eps^{-\mu}, \ \mathrm{for\ some}\ \mu>0.$
\end{itemize}
While (i) is generally true for any $C^\infty$-bounded vector fields, the estimate in (ii) needs some non-degeneracy condition on $V$. In this paper, we have restricted our analysis to the elliptic case of Hypothesis \ref{hyp:elliptic} for sake of simplicity. However, one way to extend our results to a H\"{o}rmander type situation would be the following: along the same lines as in \cite{CHLT}, carefully track the dependence on $V$ in order to show that the bound in (ii) for the Malliavin matrix still holds. This step should be enough to prove that the Varadhan estimate is valid. However,  it is worth pointing out that we do not in general have
$$d(y)=d_R(y)$$
when the vector fields are not elliptic. Hence, we expect the corresponding Varadhan estimate under H\"{o}rmander's condition to be read as:
$$-d^2_R(y)\leq\liminf_{\eps\downarrow0}\eps^2\log p_\eps(y)\leq \limsup_{\eps\downarrow0}\eps^2\log p_\eps(y)\leq -d^2(y). 
$$
Also notice that the hypoelliptic situation has been handled when $X$ is a fractional Brownian motion in \cite{BO}.
\end{remark}
}

\section{Applications}

Our main results, Theorem \ref{thm:upper-bnd-density} and Theorem \ref{th: main result} rely on Hypothesis \ref{hyp:mixed-var-R}, \ref{hyp:correlation-increments-X} and~\ref{assumption1}. Let us also recall that the density bound \eqref{eq:exp-bound-irregular} involves a coefficient $\eta$ defined by \eqref{eq:def-eta}. In this section we provide explicit examples of Gaussian processes satisfying the aforementioned assumptions and give estimates for $\eta$ as a function of $t$. 

\begin{remark}\label{about initial value}
The interpolation inequalities in Proposition \ref{interpolation-general}  and Proposition \ref{th: interpolation} rely on an integral representation for the Cameron-Martin norm related to $X$ (see relation (\ref{rep H norm})), which is satisfied for Gaussian processes starting at zero. We note that this is not a restriction in applications, since the RDE (\ref{eq:rde-intro}) driven by $X$ is the same as the one driven by $\tilde{X}=\{\tilde{X}_t=X_t-X_0, t\geq0\}$. Moreover, one easily checks that if $X$ satisfies Hypotheses \ref{hyp:mixed-var-R}, \ref{hyp:correlation-increments-X} and~\ref{assumption1}, then so does $\tilde{X}$.
\end{remark}

\begin{remark}\label{rem:correlation} 
Suppose that $X_t$ is a continuous, centered real-valued Gaussian processes with covariance $R$. Then
\begin{enumerate}
  \item[(i)] If $\partial_{ab}^{2}R \le0$  in the sense of distributions, then Hypothesis \ref{hyp:correlation-increments-X}, (i) is satisfied.
  \item[(ii)] If $\sigma^2_{s,t}=F(|t-s|)$ for some continuous, non-decreasing function $F$ then  Hypothesis \ref{hyp:correlation-increments-X}, (ii) is satisfied.
  \item[(iii)] If $X$ starts at zero, satisfies Hypothesis \ref{hyp:correlation-increments-X}, (i) and $\partial_{a}R(a,b)\ge0$ for $a<b$ in the sense of distributions, then Hypothesis \ref{hyp:correlation-increments-X}, (ii) is satisfied.
\end{enumerate}
\end{remark}
\begin{proof}
  We first note that (i) is proved in \cite[Lemma 2.20]{FGGR13} and (iii) follows from \cite[Section 4.2.1]{CHLT}. For (ii): We have
    \begin{align*}
      2R_{uv}^{st} 
      &= \sigma^2_{s,v}-\sigma^2_{s,u}+\sigma^2_{u,t}-\sigma^2_{v,t} \\
      &= F(|v-s|)-F(|u-s|)+F(|t-u|)-F(|t-v|).
    \end{align*} 
    Since $F$ is non-decreasing this implies, for $s\le u\le v\le t$, $2R_{uv}^{st} \ge0$.
\end{proof}

With this remark in mind, we are now ready to provide a series of examples to which the results of Sections \ref{sec:upper-bounds} and \ref{sec:varadhan} apply.

\begin{example} Let $B^{H}$ be a fractional Brownian motion with Hurst parameter $H\in(0,1)$. As mentioned in Remark \ref{rmk:self-simil-coef}, in this case $\eta_{t}$ does not depend on $t$ due to the self-similarity of $B^{H}$. It is also shown in \cite{CHLT} that Hypothesis \ref{hyp:correlation-increments-X} and~\ref{assumption1} are satisfied whenever $H\in(\frac{1}{4},\frac{1}{2})$. In \cite[Example 2.8]{FGGR13} it is proved that $B^H$ has H\"{o}lder-controlled mixed $(1,\rho)$-variation and thus Hypothesis \ref{hyp:mixed-var-R} is satisfied. \end{example}

\begin{example} \label{example_stat}Let $X$ be a $d$-dimensional centred Gaussian process with i.i.d.~components, such that the coefficient $\sigma^2_{s,t}$ defined by \eqref{eq:def-variance-Xt} satisfies the following relation
\[
\sigma_{s,t}^{2}=F\bigl(\vert t-s\vert\bigr)\geq0,
\]
for some non-negative, concave function $F$ satisfying $F(0)=0$ and
\begin{equation}\label{eq:F'_lowerbd}
  \inf_{s\in[0,T]}F_-'(s)>0,
\end{equation}
{where $F_-'$ denoted the left-hand derivative of the concave function $F$.}

We note that if $F$ is not identically equal to zero, then $F(0)=0$, $F\ge0$ and concavity imply that \eqref{eq:F'_lowerbd} is satisfied for some $T>0$. In addition, we assume that 
\begin{equation}\label{eq:assp_F}
C_{1}t^{\frac{1}{\rho}}\le F(t)\le C_{2}t^{\frac{1}{\rho}}\quad\forall t\in[0,T],
\end{equation}
for some $\rho \in [1,2)$, $C_{1},C_{2}>0$. Since {$2R(s,t)=-F(|t-s|)+F(t)+F(s)$, }  concavity of $F$ and the fact that $F$ is increasing imply Hypothesis \ref{hyp:correlation-increments-X}, due to Remark \ref{rem:correlation}. It is readily checked from \cite[Example 2.9]{FGGR13} that {under assumption \eqref{eq:assp_F} we have}
\[
V_{1,\rho}\bigl(R;[s,t]^{2}\bigr)\leq C\vert t-s\vert^{1/\rho}
\]
for some constant $C>0$ and thus $X$ has H\"{o}lder-controlled mixed $(1,\rho)$-variation. Recalling that $\si^2_{t}:=\si^2_{0,t}$, {invoking \eqref{eq:assp_F} again} we obtain 
\[
\eta_{t}=\frac{V_{1,\rho}\bigl(R;[0,t]^{2}\bigr)}{\si_{t}^{2}}\le C.
\]
In particular, $\eta$ is bounded on $[0,T]$. Finally, from \cite[Theorem 6.1]{FGGR13} we have that Hypothesis~\ref{assumption1} is satisfied with $\alpha=1$. \end{example} 

\begin{example} Let $X=B^{H_{1}}+B^{H_{2}}$ be a sum of two independent fBm with Hurst parameters $H_{1},H_{2}\le1/2$. Then
\[
\sigma_{s,t}^{2}=|t-s|^{2H_{1}}+|t-s|^{2H_{2}}=:F\bigl(\vert t-s\vert\bigr)
\]
and the previous example applies.  \end{example}

\begin{example} \label{examplebifbm} Consider a \textit{bifractional Brownian motion} (cf., e.g., \cite{HV03,RT06,KRT07}), that is, a centered Gaussian process $B^{H,K}$ on $[0,T]$ with covariance function given by\footnote{As pointed out, for example, in \cite{KRT07} this process does not fit in the Volterra framework.} 
\begin{eqnarray*}
 &  & R(s,t)=\frac{1}{2^{K}}\bigl(\bigl(s^{2H}+t^{2H}\bigr)^{K}-\vert t-s\vert^{2HK}\bigr),
\end{eqnarray*}
for some $H\in(0,1)$ and $K\in(0,1]$ such that $HK\le 1/2$. Since $B^{H,K}$ is a self-similar process with index $HK$, the coefficient $\eta$ does not depend on $t$. Hypothesis \ref{hyp:correlation-increments-X} and the fact that $R$ admits a H\"{o}lder-controlled mixed $(1,\rho)$-variation, i.e.\ Hypothesis \ref{rem:correlation}, have been verified in \cite[Example 2.12]{FGGR13}. In order to check Hypothesis~\ref{assumption1} we recall from \cite[equation (6.2)]{FGGR13}, using Hypothesis \ref{hyp:correlation-increments-X}, that
\begin{align*}
  2 \mathrm{Var}(X_{s,t}|\mathcal{F}_{0,s}\vee\mathcal{F}_{t,T}) 
  &\ge 2R\left(\begin{array}{cc}
  0 & T\\
  s & t
  \end{array}\right).
\end{align*}
Hence,
\begin{align*}
  2 \mathrm{Var}(X_{s,t}|\mathcal{F}_{0,s}\vee\mathcal{F}_{t,T}) 
  &\ge2\me{(X_T-X_0)(X_t-X_s)}=2(R(T,t)-R(T,s))\\
  &=2^{1-K}\big((t^{2H}+T^{2H})^K-|t-T|^{2HK})-((s^{2H}+T^{2H})^K-|s-T|^{2HK})\big)\\
  &\ge 2^{1-K}(|s-T|^{2HK}-|t-T|^{2HK})\\
  &\ge C(T)|t-s|,
\end{align*}
which implies Hypothesis~\ref{assumption1}.
\end{example}

\begin{example}\label{exrfsnon-stationary} Consider a random Fourier series\footnote{We may ignore the (constant, random) zero-mode in the series since we are only interested in properties of the increments of the process.} 
\[
\Psi(t)=\sum_{k=1}^{\infty}\alpha_{k}Y^{k}\sin(kt)+\alpha_{-k}Y^{-k}\cos(kt),\qquad t\in[0,2\pi],
\]
with zero-mean, independent Gaussians $\{Y^{k}; \, k\in\Z\}$ with unit variance. Then the covariance $R$ can be computed in an elementary way 
\begin{eqnarray}
R(s,t) & = & \sum_{k=1}^{\infty}\alpha_{k}^{2}\sin(ks)\sin(kt)+\alpha_{-k}^{2}\cos(ks)\cos(kt)\label{eqnfourierseriesdecomp}\\
 & = & \frac{1}{2} \sum_{k=1}^{\infty}(\alpha_{k}^{2}+\alpha_{-k}^{2})\cos(k(t-s))+(\alpha_{k}^{2}-\alpha_{-k}^{2})\cos(k(t+s)).\nonumber 
\end{eqnarray}
{Let us consider the special case where $\Psi$ is a stationary random field.} This implies $\alpha_{k}^{2}=\alpha_{-k}^{2}$ and thus
\begin{equation*}
R(s,t)  =K(|t-s|),
\quad\text{and}\quad
\sigma_{s,t}^2 =2(K(0)-K(|t-s|))=:F(|t-s|),
\end{equation*}
where the function $K$ is defined by
\begin{eqnarray*}
K(t):=\sum_{k=1}^{\infty}\alpha_{k}^{2}\cos(kt).
\end{eqnarray*}
We now wish to prove that this situation can be seen as a particular case of Example \ref{example_stat}.
For simplicity we concentrate on the model-case
\begin{equation}\label{c1}
\alpha_{k}^{2}=Ck^{-(1+\frac{1}{\rho})}.
\end{equation}
for some $\rho \in [1,2)$, $C>0$. For more general conditions on the coefficients we refer to \cite[Section 3]{FGGR13}. By \cite[Section 3]{FGGR13}, $K$ is convex on $[0,2\pi]$, decreasing on $[0,\pi]$ and $\frac{1}{\rho}$-Hölder continuous. {In order to check the conditions of Example \ref{example_stat}}, it remains to verify the lower bound in \eqref{eq:assp_F}. We observe
\begin{align*}
F(t) =K(0)-K(t) & =\sum_{k=1}^{\infty}\alpha_{k}^{2}(1-\cos(kt))=2\sum_{k=1}^{\infty}\alpha_{k}^{2}\sin^{2}(\frac{kt}{2})\ge2\sum_{k=\lfloor\frac{1}{2t}\rfloor}^{\lfloor\frac{1}{t}\rfloor}\alpha_{k}^{2}\sin^{2}(\frac{kt}{2})\\
 & \gtrsim\sum_{k=\lfloor\frac{1}{2t}\rfloor}^{\lfloor\frac{1}{t}\rfloor}\alpha_{k}^{2}\gtrsim\alpha_{\lfloor\frac{1}{t}\rfloor}^{2}(\lfloor\frac{1}{t}\rfloor-\lfloor\frac{1}{2t}\rfloor)\gtrsim\alpha_{\lfloor\frac{1}{t}\rfloor}^{2}\lfloor\frac{1}{t}\rfloor\gtrsim t^{\frac{1}{\rho}},
\end{align*}
where we write $a \gtrsim b$ whenever $a\ge c \, b$ for a universal constant $c$ and where we have used inequality \eqref{c1} for the last step.
Since $F$ is not identically equal to zero, it follows that there is a time $T\in(0,2\pi]$, such that $F$ is concave, $\inf_{s\in[0,T]}F'_{-}(s)>0$, $F$ is $\frac{1}{\rho}$-Hölder continuous and \eqref{eq:assp_F} is satisfied. Hence, by Example \ref{example_stat} Hypothesis \ref{hyp:mixed-var-R}, \ref{hyp:correlation-increments-X} and~\ref{assumption1} are satisfied and $\eta$ is bounded on $[0,T]$. \end{example}

\begin{example}\label{exspectralmeasure} Let $X$ be a $d$-dimensional continuous, centred Gaussian process with i.i.d.~components. In the following $X_{t}$ denotes one of its components. Assume that $X_{t}$ is a stationary, zero-mean process with covariance 
\[
R(s,t)=K\bigl(\vert t-s\vert\bigr)
\]
for some continuous and positive definite function $K$. By Bochner's Theorem there is a finite positive symmetric measure $\mu$ on $\mathbb{R}$ such that
\begin{equation*}
K(t)  =  \int\cos(t\xi)\mu(d\xi)
\end{equation*}
and thus 
\begin{equation*}
\sigma^{2}(t) :=  \sigma^{2}_{0,t}=2\bigl(K(0)-K(t)\bigr)=4\int\sin^{2}(t\xi/2)\mu(d\xi).
\end{equation*}
The case of discrete $\mu$ corresponds to Example~\ref{exrfsnon-stationary}. Another example is given by the \textit{fractional Ornstein--Uhlenbeck process}, 
\[
X_{t}=\int_{-\infty}^{t}e^{-\lambda(t-u)}\,dB_{u}^{H},\qquad t\in\mathbb{R}.\label{eq:fracOU2}
\]
In this case, it is known that $X$ has a \emph{spectral density} $\mu(d\xi)$ such that
\begin{equation}
\frac{d\mu}{d\xi}=c_{H}\frac{\vert\xi\vert^{1-2H}}{\lambda^{2}+\xi^{2}}\equiv\hat{K}(\xi).\label{eq:fracmu-for-fOU}
\end{equation}
By Theorem~7.3.1 in \cite{MR06} we have that if $\hat{K}$ is regularly varying at $\infty$, then the coefficient $\si_{t}$ defined by \eqref{eq:def-variance-Xt} satisfies $\sigma_{t}^{2}\sim\frac{C\hat{K}(1/t)}{t}$ as $t\rightarrow0$ which in the case of \eqref{eq:fracmu-for-fOU} implies that there exists a $T>0$ such that
\[
C_{1}t^{2H}\le\sigma^{2}(t)\le C_{2}t^{2H}\quad\text{for all }t\in[0,T].
\]
Moreover, it can be seen that there is a $T>0$ such that $K$ is convex on the interval $[0,T]$ (cf.~\cite[Example 5.3]{FGGR13}) and $\sup _{t\in [0,T]}K'(t)<0$. Hence, Hypothesis \ref{hyp:correlation-increments-X} and by \cite[equation (6.2)]{FGGR13} Hypothesis~\ref{assumption1} are satisfied. By \cite{FGGR13} we conclude 
\[
V_{1,\rho}(R;[s,t]^{2})=O(\vert t-s\vert^{2H})\text{ for all }[s,t]\subseteq[0,T].
\]
Hence, Hypothesis \ref{hyp:mixed-var-R} is satisfied and
\[
\eta_{t}\le C\quad\text{for all }t\in[0,T].
\]
\end{example}

\bigskip

\noindent{\bf Acknowledgment.}\quad We would like to thank the anonymous referee for his/her very careful reading of the first version of this paper and for many valuable suggestions.

\medskip

\end{document}